\documentclass[11pt]{article}
\usepackage[T1]{fontenc}
\usepackage{lmodern}
\usepackage[letterpaper,margin=1.05in]{geometry}
\usepackage{microtype}
\usepackage{amsmath,amssymb,amsthm,mathtools}
\usepackage{dsfont}
\usepackage{enumitem}
\usepackage{aliascnt}
\usepackage{xcolor}
\usepackage{tikz}
\usepackage[normalem]{ulem}
\usepackage[colorlinks=true,linkcolor=blue!45!black,citecolor=green!30!black,
  urlcolor=blue!50!black,breaklinks=true]{hyperref}
\usepackage[nameinlink,capitalize,noabbrev]{cleveref}
\numberwithin{equation}{section}
\newtheorem{theorem}{Theorem}[section]
\newaliascnt{proposition}{theorem}
\newtheorem{proposition}[proposition]{Proposition}
\aliascntresetthe{proposition}
\newaliascnt{lemma}{theorem}
\newtheorem{lemma}[lemma]{Lemma}
\aliascntresetthe{lemma}
\newaliascnt{corollary}{theorem}
\newtheorem{corollary}[corollary]{Corollary}
\aliascntresetthe{corollary}
\theoremstyle{definition}
\newaliascnt{problem}{theorem}
\newtheorem{problem}[problem]{Problem}
\aliascntresetthe{problem}
\theoremstyle{remark}
\newaliascnt{remark}{theorem}
\newtheorem{remark}[remark]{Remark}
\aliascntresetthe{remark}
\crefname{problem}{Problem}{Problems}
\newcommand{\R}{\mathbb R}
\newcommand{\E}{\mathbb E}
\newcommand{\PP}{\mathbb P}
\newcommand{\one}{\mathbf 1}
\newcommand{\cE}{\mathcal E}
\newcommand{\cN}{\mathcal N}
\newcommand{\ip}[2]{\langle #1,#2\rangle}
\newcommand{\norm}[1]{\lVert #1\rVert}
\newcommand{\ind}{\mathds 1}
\DeclareMathOperator{\diag}{diag}
\DeclareMathOperator{\rank}{rank}
\DeclareMathOperator{\tr}{tr}
\DeclareMathOperator{\conv}{conv}
\DeclareMathOperator{\Cov}{Cov}
\DeclareMathOperator{\Var}{Var}
\title{Stochastic Domination of Gaussian Maxima\\by the Regular Simplex}
\author{Abhijeet Mulgund\thanks{Support from grants NSF 2217023 and NSF 2240532
is acknowledged.}\\
  \small University of Illinois Chicago\\
  \small \href{mailto:mulgund2@uic.edu}{\texttt{mulgund2@uic.edu}}}
\date{}
\hypersetup{pdftitle={Stochastic Domination of Gaussian Maxima by the Regular Simplex},
  pdfauthor={Abhijeet Mulgund}}

\begin{document}
\maketitle

\begin{abstract}
Let \(n\ge2\), and let \(X=(X_1,\ldots,X_n)\) be a centered Gaussian vector with
\(\Var(X_i)=1\) for every \(i\). Let \(Z_1,\ldots,Z_n\) be independent standard Gaussians, and put \(\overline Z=(Z_1+\cdots+Z_n)/n\). We prove
\[
 \PP\Bigl\{\max_i X_i\le t\Bigr\}\ \ge\
 \PP\Bigl\{\sqrt{\tfrac n{n-1}}\,\max_i\,(Z_i-\overline Z)\le t\Bigr\}
 \qquad\text{for every }t\in\R,
\]
and for each fixed \(t>0\) equality holds only when
\(\Cov(X_i,X_j)=-1/(n-1)\) for all \(i\ne j\). The right side is the
distribution function of the maximum of the regular simplex vector.
Equivalently, among all simplices containing a given centered ball, the
regular simplex circumscribed about the ball has the least standard Gaussian
measure, as conjectured by Balitskiy, Karasev, and Tsigler. In our preceding
paper we proved this comparison after both maxima are smoothed by independent
Gaussian noise of variance \(1/(n-1)\), which suffices for the Weak Simplex
Conjecture; here we remove the smoothing, which is what probabilities at a
single threshold require. As an application we consider \(n\) equally likely signals of equal energy in Gaussian noise, where the transmitter may also send nothing. At every positive false-alarm level, and for every law of a common nonnegative random amplitude not concentrated at zero, the regular simplex uniquely maximizes the average probability of correct identification whenever the signal dimension is at least \(n-1\).
A Lean formalization is available at \url{https://github.com/abhmul/full-simplex-conjecture-lean}.
\end{abstract}

\noindent\textbf{2020 Mathematics Subject Classification.}
Primary 60E15; Secondary 52A40, 60G15, 94A13, 05C69.

\smallskip
\noindent\textbf{Keywords.}
Gaussian maxima, stochastic order, regular simplex, Gaussian measure,
circumscribed simplex, Slepian's inequality, signal detection and
identification.

\smallskip
{\small\noindent\textbf{AI disclosure.} Generative AI provided the key
insights of the argument, with some input from the author. AI was also used
for proof development and checking, literature discovery, drafting,
editorial revision, and Lean formalization. The author independently checked
the mathematical arguments and references and takes full responsibility for
the correctness, attribution, and presentation of the paper. No theorem,
proof step, or bibliographic assertion was retained solely on the authority
of an automated system. No automated system is an author, and the proofs do
not rely on a computer-assisted verification. Further details can be found in
\cref{app:ai}.\par}

\section{Introduction}\label{sec:introduction}

In 2017 Balitskiy, Karasev, and Tsigler asked which simplex containing a given
ball has the least Gaussian measure \cite[Conjecture~3.3]{BKT2017}. Their
conjecture is that among all simplices in \(\R^{n-1}\) that contain the
centered ball \(B(0,t)\), the regular simplex circumscribed about the ball is
the minimizer. For tetrahedra (\(n=4\)) this already follows from a spherical
theorem of Fejes T\'oth \cite[p.~27, (4)]{FejesToth1950}; see also
\cite[p.~4]{BKT2017}. We prove the conjecture in every dimension, together with
its equality case.

The conjecture can be interpreted as a statement about maxima of Gaussian
random variables. A simplex containing \(B(0,t)\) can be written as
\(\{y:\ip{v_i}{y}\le b_i,\ 1\le i\le n\}\) with unit outer normals
\(v_1,\ldots,v_n\) and offsets \(b_i\ge t\). If \(W\) is a standard Gaussian
vector in \(\R^{n-1}\), its Gaussian measure is at least
\[
 \PP\{\ip{v_i}{W}\le t\text{ for every }i\}
 =\PP\Bigl\{\max_i X_i\le t\Bigr\},\qquad X_i=\ip{v_i}{W}.
\]
The \(X_i\) are standard Gaussians whose correlation matrix is the Gram matrix
of the normals. For the regular simplex, \(X\) has the law of the regular
simplex vector
\begin{equation}\label{eq:simplex-vector}
 \xi_i=\sqrt{\frac n{n-1}}\,\bigl(Z_i-\overline Z\bigr),
 \qquad \overline Z=\frac1n\sum_{j=1}^nZ_j,
\end{equation}
where \(Z_1,\ldots,Z_n\) are independent standard Gaussians. Its coordinates
have unit variance, a common correlation \(-1/(n-1)\), and sum to zero. The
conjecture therefore follows if
\[
 \PP\Bigl\{\max_iX_i\le t\Bigr\}\ \ge\ \PP\Bigl\{\max_i\xi_i\le t\Bigr\}
 \qquad(t\in\R)
\]
for every centered Gaussian vector \(X\) with unit variances, whatever its
correlations. \Cref{thm:main} establishes precisely this inequality.

The same maximum governs a classical problem in communication. If \(n\)
equally likely signals \(\lambda v_1,\ldots,\lambda v_n\) of equal energy are
sent through additive white Gaussian noise, maximum-likelihood decoding
succeeds with probability
\(n^{-1}e^{-\lambda^2/2}\,\E\exp(\lambda\max_i\ip{v_i}{W})\), where \(W\) is a
standard Gaussian vector \cite[p.~488, (2.3)]{Balakrishnan1961}. The Weak
Simplex Conjecture, which goes back to a remark of Shannon reported by Rice
\cite[p.~68]{Rice1950} and was named by Massey \cite{Massey1988}, asserts that
the regular simplex maximizes this probability. Balitskiy, Karasev, and
Tsigler proposed their conjecture as a route to it: slicing the decoding
integral at each level shows that their conjecture implies the Weak Simplex
Conjecture \cite[Lemma~3.1 and pp.~3--4]{BKT2017}. In our preceding paper
\cite{Mulgund2026} we instead resolved the Weak Simplex Conjecture by proving
a comparison of Gaussian maxima after smoothing, which we describe in
\cref{sec:smoothed}. \Cref{thm:main} completes the more direct route proposed
by Balitskiy, Karasev, and Tsigler.

\subsection{The main result}

Write \(\one=(1,\ldots,1)^\top\) for the all-ones vector of any dimension,
\(J=\one\one^\top\), \(I\) for the identity matrix (with a subscript when its
dimension matters), and \(g_{ij}\) for the entries of a matrix \(G\). The set
of \(n\times n\) correlation matrices is
\[
 \cE_n=\{G\in\R^{n\times n}:G=G^\top\succeq0,\ g_{ii}=1\}.
\]
For \(G\in\cE_n\), let \(X^G\sim\cN(0,G)\) and define
\begin{equation}\label{eq:definitions}
 f_G(x)=\PP\{X^G_i\le x_i\text{ for every }i\},\qquad
 F_G(t)=f_G(t\one),\qquad M_G=\max_i X^G_i.
\end{equation}
The regular simplex vector \eqref{eq:simplex-vector} has covariance
\begin{equation}\label{eq:simplex}
 \Delta_n=\frac{nI-J}{n-1},
\end{equation}
and we write \(D_n(t)=F_{\Delta_n}(t)\) for the distribution function of its maximum.
The matrix \(\Delta_n\) has rank \(n-1\); it is the Gram matrix of the
vertices of a regular simplex inscribed in the unit sphere of \(\R^{n-1}\).

\begin{theorem}\label{thm:main}
For every integer \(n\ge2\), every \(G\in\cE_n\), and every \(t\in\R\),
\begin{equation}\label{eq:main}
 F_G(t)\ge D_n(t).
\end{equation}
For each fixed \(t>0\),
\begin{equation}\label{eq:main-equality}
 F_G(t)=D_n(t)\quad\Longleftrightarrow\quad G=\Delta_n.
\end{equation}
\end{theorem}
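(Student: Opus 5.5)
We will treat the problem as the minimization of \(G\mapsto F_G(t)\) over the compact convex set \(\cE_n\) for a fixed \(t\). For \(t\le0\) there is nothing deep to prove. The coordinates of the simplex vector sum to zero, so \(\max_i\xi_i\ge0\) almost surely and \(D_n(t)=0\) for \(t<0\). At \(t=0\) we likewise get \(D_n(0)=\PP\{\xi=0\}=0\), because \(\Delta_n\) has rank \(n-1\ge1\). The inequality is therefore trivial for \(t\le 0\), and we may fix \(t>0\).

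The main tool is Plackett's identity,
\[
 \frac{\partial F_G(t)}{\partial g_{ij}}=\varphi_{ij}(t,t;g_{ij})\,\PP\{X_k\le t\ \forall k\ne i,j\mid X_i=X_j=t\}\ \ge0 ,
\]
where \(\varphi_{ij}\) is the bivariate normal density. Thus \(F_G\) is coordinatewise nondecreasing in the off-diagonal entries, which is Slepian's inequality. This monotonicity alone does not finish the proof, because \(\Delta_n\) is not entrywise minimal: other correlation matrices have some entries below \(-1/(n-1)\).

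The real constraint is \(G\succeq0\), which forces \(\sum_{i\ne j}g_{ij}=\one^\top G\one-n\ge-n\). So Slepian lets us lower correlations, but only along a path that stays inside the cone. We first reduce to the case \(\one^\top G\one=0\), that is, \(\sum_iX_i=0\) almost surely. To do this we replace \(X\) by its projection onto \(\one^\perp\), rescaled to unit variances. We must check that this operation does not increase \(F\). We would try a Gaussian interpolation \(G_s=G-s\,uu^\top\) with a suitable \(u\), and track the sign of \(\frac{d}{ds}F_{G_s}(t)\) through Plackett's formula.

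On the face \(\{G\in\cE_n: G\one=0\}\), the law of \(X\) is a centered Gaussian on the hyperplane \(\sum x_i=0\). The event \(\{\max X_i\le t\}\) is then \(\{\langle v_i,W\rangle\le t\}\), the circumscribed simplex with unit normals \(v_i\) satisfying \(\sum_i v_i=0\) under a Gram normalization. Here we would prove that the regular configuration minimizes, via a symmetrization and convexity argument. Let \(\sigma\) range over the permutations of the coordinates. The average \(\bar G=\frac1{n!}\sum_\sigma P_\sigma GP_\sigma^\top\) equals \(\Delta_n\) exactly on this face. The claim would then follow if \(G\mapsto F_G(t)\) were convex along segments from \(\Delta_n\) within the face, giving \(F_{\Delta_n}\le \frac1{n!}\sum_\sigma F_{P_\sigma GP_\sigma^\top}=F_G\).

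We expect this convexity, namely nonnegativity of the second derivative \(\sum \partial^2F/\partial g_{ij}\partial g_{kl}\,h_{ij}h_{kl}\) for directions \(H\) with \(H\one=0\) and zero diagonal, to be the main obstacle. Our first attempt would use the heat-equation representation \(\partial_{g_{ij}}F=\partial_{x_i}\partial_{x_j}f_G\), which turns the second variation into \(\E\) of a fourth-order derivative of the indicator contracted with \(H\otimes H\). We would then apply the smoothed comparison of the preceding paper (\cref{sec:smoothed}) together with a deconvolution or limiting argument to transfer it to zero noise.

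For equality, strict positivity of the Plackett densities at \(t>0\) makes every monotonicity step strict unless \(G\) already lies on the face. Strict convexity on the face then forces \(G=\Delta_n\).
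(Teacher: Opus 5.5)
Your plan rests on two steps, and neither is established. The first already contains the whole theorem in small dimensions.

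The reduction to the face $\{G\one=0\}$ fails as described. Projecting onto $\one^\perp$ and then rescaling to unit variances destroys $\sum_iX_i=0$ as soon as the projected variances differ. For $X=(Z,-Z,Z)$ the operation returns $X$ itself, which is off the face. No Slepian-monotone move helps there either. Every $\Gamma\in\cE_3$ with $\Gamma_{ij}\le g_{ij}$ must have $\Gamma_{12}=\Gamma_{23}=-1$, hence $\Gamma=G$. And since this $G$ has rank one, a renormalized path $G-suu^\top$ cannot move it. More fundamentally, $\cE_3\cap\{G\one=0\}=\{\Delta_3\}$, so for $n=3$ ``reaching the face without increasing $F_G(t)$'' is exactly the theorem. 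In general, any path from $G$ to the face must raise some correlations, and then the Plackett derivative $\sum_{i<j}\dot g_{ij}\,q_{ij}$ has no sign. This is precisely the failure of covariance comparison that makes the problem hard, not a preliminary to it.

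The convexity step is likewise unsupported, and you flag it yourself as the main obstacle. The map $G\mapsto F_G(t)$ is not convex on $\cE_n$. For $n=2$, $\partial_\rho^2F=\partial_\rho\varphi_2(t,t;\rho)<0$ at $\rho=-1/2$ whenever $t^2<1/6$. So any convexity on the face would have to come from the constraint $G\one=0$, and your heat-equation representation (a fourth derivative of $f_G$ contracted with $H\otimes H$) does not use that constraint. The proposed transfer from the smoothed comparison also cannot work. That comparison is available only at noise variance $1/(n-1)$ and above, and Gaussian convolution preserves but does not reflect stochastic order. So there is no deconvolution or limit $\sigma\downarrow0$ to take; the unsmoothed comparison is what the theorem asserts.

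Your equality argument inherits both gaps. In addition, pair weights can vanish for non-antipodal pairs at $t>0$: for $X=(Z_1,Z_2,(Z_1+Z_2)/\sqrt2)$ one has $q_{12}=0$. So strictness from ``positive Plackett densities'' is not automatic.

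The missing idea is to avoid global paths and convexity altogether and work only at a minimizer.
\begin{itemize}
\item The paper minimizes the tilted difference $F_G(t)-D_n(t)+\varepsilon t$ jointly over $(G,t)$. This reduces everything to showing $F_G'(t)\ge D_n'(t)$ at a covariance $G$ minimizing $F_\Gamma(t)$.
\item First-order conditions along noise-adding paths give $S\succeq0$ and $SG=0$, with a positive kernel vector $k=S\one$.
\item The resulting row inequalities are combined with log-concavity of the conditional masses, a dilation identity, and the induction hypothesis in dimension $n-1$. They aggregate to $\sum_iH_i\ge nD_{n-1}(\rho_n(t))$, with equality only at $\Delta_n$.
\end{itemize}
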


In other words \(M_G\le_{\mathrm{st}}M_{\Delta_n}\), where
\(U\le_{\mathrm{st}}V\) means \(\PP\{U\le t\}\ge\PP\{V\le t\}\) for every
\(t\). The equality statement is restricted to positive thresholds because for
\(n\ge3\) it fails at \(t\le0\). Since \(\sum_i\xi_i=0\), we have \(D_n(t)=0\)
for \(t\le0\), and every \(G\) containing an antipodal pair (\(g_{ij}=-1\) for
some \(i\ne j\)) also has \(F_G(t)=0\) for \(t\le0\).

\Cref{thm:main} resolves the circumscribed-simplex conjecture in the following
form. Let \(\gamma_r\) denote standard Gaussian measure on \(\R^r\).

\begin{corollary}\label{cor:geometric}
Let \(t>0\), and let \(T_t\subset\R^{n-1}\) be a regular simplex circumscribed
about \(B(0,t)\). If a simplex \(K\subset\R^{n-1}\) contains \(B(0,t)\), then
\[
 \gamma_{n-1}(K)\ge\gamma_{n-1}(T_t),
\]
with equality if and only if \(K\) is the image of \(T_t\) under an orthogonal
transformation.
\end{corollary}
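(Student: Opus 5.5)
The plan is to reduce \cref{cor:geometric} to \cref{thm:main} by comparing \(K\) with the simplex that has the same facet normals but all offsets lowered to \(t\). Let \(K\subset\R^{n-1}\) be a simplex containing \(B(0,t)\). It has exactly \(n\) facets, so it can be written as \(K=\{y:\ip{v_i}{y}\le b_i,\ 1\le i\le n\}\) with unit outer normals \(v_i\). The facet hyperplane \(\{\ip{v_i}{y}=b_i\}\) lies at distance \(b_i\) from the origin. Since \(B(0,t)\subset K\), the point \(tv_i\) lies in \(K\), which gives \(b_i\ge t\).

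Put \(P=\{y:\ip{v_i}{y}\le t\text{ for every }i\}\subseteq K\), and let \(G\) be the Gram matrix of \(v_1,\ldots,v_n\). Then \(G\in\cE_n\), and with \(W\sim\gamma_{n-1}\) the vector \(X_i=\ip{v_i}{W}\) has law \(\cN(0,G)\). Hence
\[
 \gamma_{n-1}(K)\ \ge\ \gamma_{n-1}(P)\ =\ F_G(t)\ \ge\ D_n(t),
\]
where the last step is \eqref{eq:main}. For the regular simplex \(T_t\), the unit normals \(u_i\) have Gram matrix \(\Delta_n\) and every offset equals \(t\), because the inscribed ball touches every facet. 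By the same identity, \(\gamma_{n-1}(T_t)=F_{\Delta_n}(t)=D_n(t)\), which proves the inequality.

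For the equality case, suppose \(\gamma_{n-1}(K)=\gamma_{n-1}(T_t)\). Then both inequalities in the display are equalities.
\begin{enumerate}[label=(\roman*)]
\item Since \(t>0\), equality in the second inequality gives \(G=\Delta_n\) by \eqref{eq:main-equality}.
\item For the first inequality, suppose some \(b_i>t\). Take a relative interior point \(y_0\) of the \(i\)-th facet of \(K\). Because \(K\) is full-dimensional and convex, \(K\) contains points arbitrarily close to \(y_0\) that lie in \(K\cap B(y_0,\varepsilon)\). This set has positive Lebesgue measure. For \(\varepsilon<b_i-t\) every such point satisfies \(\ip{v_i}{y}>t\), so it lies in \(K\setminus P\). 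The Gaussian density is strictly positive, so \(\gamma_{n-1}(K)>\gamma_{n-1}(P)\), a contradiction. Hence \(b_i=t\) for every \(i\).
\end{enumerate}

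It remains to see that \(K\) is an orthogonal image of \(T_t\). Both \((v_i)\) and \((u_i)\) are families of \(n\) vectors in \(\R^{n-1}\) with the same Gram matrix \(\Delta_n\) of rank \(n-1\), so each family spans \(\R^{n-1}\). The linear map \(Q\) defined by \(Qu_i=v_i\) is therefore well defined and orthogonal: it preserves all inner products among a spanning set. This gives
\[
 QT_t=\{Qy:\ip{u_i}{y}\le t\}=\{z:\ip{v_i}{z}\le t\}=K.
\]
Conversely, orthogonal maps preserve \(\gamma_{n-1}\), so every orthogonal image of \(T_t\) attains equality.

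The only step with real content is the strictness argument in (ii), that \(b_i>t\) forces a Gaussian-positive gap between \(K\) and \(P\). The rest is bookkeeping, since \cref{thm:main} carries all the weight.
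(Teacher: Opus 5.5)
Your proposal is correct and follows the paper's proof essentially step for step: the same chain \(\gamma_{n-1}(K)\ge F_G(t)\ge D_n(t)=\gamma_{n-1}(T_t)\), the equality case of \cref{thm:main} to get \(G=\Delta_n\), a positive-measure piece of \(K\setminus P\) to force every \(b_i=t\), and the Gram-matrix argument for the orthogonal map. The only cosmetic difference is in showing \(b_i=t\): the paper uses an open ball about \((t+\varepsilon)v_i\) lying entirely in \(K\setminus P\), while you use \(K\cap B(y_0,\varepsilon)\) for a facet point \(y_0\); that set has positive measure because \(K\) is a convex body, since it contains \(y_0+\lambda(K-y_0)\) for small \(\lambda>0\).
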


The inequality is immediate from \cref{thm:main}. Write
\(K=\{y:\ip{v_i}{y}\le b_i\}\) with unit normals \(v_i\) and offsets
\(b_i\ge t\), and let \(G\) be the Gram matrix of the normals. The facet
normals of \(T_t\) have Gram matrix \(\Delta_n\), so
\begin{equation}\label{eq:geometric-chain}
 \gamma_{n-1}(K)\ \ge\ \gamma_{n-1}\{y:\ip{v_i}{y}\le t\text{ for every }i\}
 =F_G(t)\ \ge\ D_n(t)=\gamma_{n-1}(T_t).
\end{equation}
If equality holds, \cref{thm:main} gives \(G=\Delta_n\), and every \(b_i\)
must equal \(t\); we give the details in \cref{app:equivalence}. The converse
also holds: for each fixed \(n\), the non-strict assertion of
\cref{cor:geometric} for all \(t>0\) implies \eqref{eq:main}. We prove this in
\cref{app:equivalence}, starting from an argument sketched in
\cite[p.~4]{BKT2017}, so \eqref{eq:main} is equivalent to the conjecture of
Balitskiy, Karasev, and Tsigler. In particular, the case \(n=4\) of
\eqref{eq:main} already follows from the theorem of Fejes T\'oth mentioned
above.\footnote{Balitskiy, Karasev, and Tsigler use the Gaussian density
proportional to \(e^{-|x|^2}\); rescaling the radius gives our
normalization.}

\subsection{Relation to the smoothed comparison}\label{sec:smoothed}

The comparison proved in \cite[Theorem~2.1]{Mulgund2026} is
\(F_R(c)\ge\Phi(c)^n\) for every \(c\in\R\) and every \(R\in\cE_n\) with
\(R-J/n\succeq0\), where \(\Phi\) is the standard normal distribution
function. These are precisely the matrices
\[
 R=\frac{(n-1)G+J}{n},\qquad G\in\cE_n,
\]
and this map sends \(\Delta_n\) to \(I_n\). If \(B\sim\cN(0,1)\) is independent
of \(X^G\), then \(\sqrt{(n-1)/n}\,X^G+B\one/\sqrt n\) has covariance \(R\), so
\(M_R\) has the law of \(\sqrt{(n-1)/n}\,M_G+B/\sqrt n\). Dividing by
\(\sqrt{(n-1)/n}\), the theorem of \cite{Mulgund2026} is therefore equivalent to
\begin{equation}\label{eq:predecessor}
 M_G+\sigma B\ \le_{\mathrm{st}}\ M_{\Delta_n}+\sigma B',
 \qquad \sigma=\frac1{\sqrt{n-1}},\qquad G\in\cE_n,
\end{equation}
where \(B,B'\) are standard Gaussians independent of the respective maxima.
That is, the two maxima are compared after both are smoothed by an independent
Gaussian of variance \(1/(n-1)\).

The smoothing is harmless for exponential moments: \(\E e^{\lambda(M+\sigma
B)}=e^{\lambda^2\sigma^2/2}\,\E e^{\lambda M}\), so \eqref{eq:predecessor}
compares \(\E e^{\lambda M_G}\) with \(\E e^{\lambda M_{\Delta_n}}\), which is
what decoding needs. It is \emph{not} harmless for a threshold. Integrating
\eqref{eq:predecessor} against a nondecreasing function \(\chi\) gives
\[
 \E\psi(M_G)\le\E\psi(M_{\Delta_n}),\qquad \psi(x)=\E\chi(x+\sigma B).
\]
Every such \(\psi\) that is finite is also continuous and, unless constant,
strictly increasing. So \(\psi\) is never a threshold indicator
\(\ind_{\{x>t\}}\), and \eqref{eq:predecessor} does not by itself yield
\eqref{eq:main}: smoothing preserves stochastic order, but an order between
smoothed variables need not hold for the unsmoothed ones. \Cref{thm:main}
removes the smoothing. It implies \eqref{eq:predecessor} by conditioning on
\(B\), and it determines equality at a single positive threshold. For the
smoothed comparison, strictness at a single threshold was obtained
independently in our preceding paper
\cite[Theorem~2.1 and Remark~2.3]{Mulgund2026} and by Su et al.\
\cite[Theorem~1]{SuYangXuI2026}.

The only other stochastic comparison with an extremal regular simplex in every
dimension that we know of is also smoothed. Let \(K\subset\R^r\) be a convex
body whose minimal-volume containing ellipsoid is the unit ball, and let
\(p_K\) be its gauge. When \(K=\{x:\ip{v_i}{x}\le1\}\) is a polytope,
\(p_K(W)=\max_i\ip{v_i}{W}\) is a Gaussian maximum. Barthe proved that the
regular simplex \(T\) inscribed in the unit ball maximizes \(\E p_K(W)\)
\cite[Theorem~3]{Barthe1998}, and his proof (pp.~691--692) gives the
stronger comparison
\[
 p_K(W)+\sqrt r\,W_0\ \le_{\mathrm{st}}\ p_T(W)+\sqrt r\,W_0,
\]
where \(W\) is a standard Gaussian vector in \(\R^r\) and \(W_0\) is an
independent standard Gaussian. B\"or\"oczky, Fodor, and Hug display this
inequality in integral form for the convex hull \(C\subseteq K\) of the
finitely many contact points in a John decomposition \cite[(70)]{BFH2021},
which gives it for \(K\) because
\(p_K\le p_C\). Like \eqref{eq:predecessor}, it compares smoothed variables,
here with variance \(r\), and it compares only convex bodies whose minimal-volume containing ellipsoid is the unit ball.

\subsection{The difficulty and the idea}

The difficulty lies in the failure of the usual covariance comparison.
Slepian's inequality \cite{Slepian1962,Plackett1954} gives \eqref{eq:main} whenever
\(g_{ij}\ge-1/(n-1)\) for all \(i\ne j\), since \(\Delta_n\) is then entrywise
smaller than \(G\) off the diagonal. For every other \(G\), however, some pair of
coordinates is more negatively correlated than in the simplex. Since \(G\) is
positive semidefinite,
\begin{equation}\label{eq:average-correlation}
 0\le\one^\top G\one=n+\sum_{i\ne j}g_{ij},
\end{equation}
so the average off-diagonal entry is at least \(-1/(n-1)\), so some other
pair must be less negatively correlated than in the simplex. Thus such a \(G\) is not
comparable with \(\Delta_n\) entrywise, and the theorem must handle precisely
these matrices, in which some very negative correlations are compensated by less negative ones.

We argue by induction on \(n\), through the derivative of \(F_G\) in \(t\).
Let \(n\ge3\) and \(t>0\), and suppose that no two coordinates coincide
(\(g_{ij}<1\) for \(i\ne j\)). Then (\cref{prop:boundary-calculus})
\begin{equation}\label{eq:intro-derivative}
 F_G'(t)=\varphi(t)\sum_{i=1}^nH_i(t),\qquad
 H_i(t)=\PP\{X^G_j\le t\ (j\ne i)\mid X^G_i=t\},
\end{equation}
where \(\varphi=\Phi'\), while for the simplex the classical recurrence
(\cref{lem:recurrence}) gives
\begin{equation}\label{eq:intro-recurrence}
 D_n'(t)=n\varphi(t)D_{n-1}(\rho_n(t)),\qquad \rho_n(t)=t\sqrt{\frac n{n-2}}.
\end{equation}
Comparing \eqref{eq:intro-derivative} with \eqref{eq:intro-recurrence}, it
would suffice to show \(H_i(t)\ge D_{n-1}(\rho_n(t))\) for every \(i\). The
induction hypothesis gives this only when every \(g_{ij}\le-1/(n-1)\): given
\(X^G_i=t\), each other coordinate, standardized, must stay below the offset
\(\beta_{ij}=t\sqrt{(1-g_{ij})/(1+g_{ij})}\), which is at least \(\rho_n(t)\)
exactly when \(g_{ij}\le-1/(n-1)\). By \eqref{eq:average-correlation}, this fails for every
\(G\ne\Delta_n\).

Our main observation is that at a covariance minimizing \(F_G(t)\) the
compensation between correlations suffices \emph{in aggregate}: the
individual bounds may fail, but their sum holds. Since \(\cE_n\) is compact and
\(F_G(t)\) is continuous in \(G\) (\cref{lem:compact}), some \(G\in\cE_n\)
minimizes \(F_G(t)\), and for this minimizing \(G\) we prove
\begin{equation}\label{eq:intro-aggregate}
 \sum_{i=1}^nH_i(t)\ \ge\ nD_{n-1}(\rho_n(t)),
\end{equation}
with equality only at \(\Delta_n\) (\cref{prop:interface}, with the induction
hypothesis). By \eqref{eq:intro-derivative} and \eqref{eq:intro-recurrence},
\eqref{eq:intro-aggregate} is the derivative inequality
\(F_G'(t)\ge D_n'(t)\) at the minimizer, and we show in \cref{sec:global} that
this inequality at minimizers implies \cref{thm:main}, together with its
equality case.

Three ingredients prove \eqref{eq:intro-aggregate}. All of them involve the
pair weights
\[
 q_{ij}=\varphi_2(t,t;g_{ij})\,
 \PP\{X^G_\ell\le t\ (\ell\ne i,j)\mid X^G_i=X^G_j=t\}\qquad(i\ne j),
\]
where \(\varphi_2(\cdot,\cdot;c)\) is the standard bivariate Gaussian density
with correlation \(c\).

\emph{First-order conditions} (\cref{sec:variation}). Add independent Gaussian
noise with covariance \(sL\), where \(L\succeq0\), to \(X^G\) and renormalize
the coordinates to unit variance. This moves \(G\) along a path \(G_s\) in
\(\cE_n\), defined only for \(s\ge0\), and \cref{prop:noise} gives
\[
 \frac{d}{ds}F_{G_s}(t)\Big|_{s=0+}=\frac12\tr(SL),\qquad
 S_{ij}=q_{ij}\ (i\ne j),\quad S_{ii}=-\sum_{j\ne i}g_{ij}q_{ij}.
\]
At a minimizer this one-sided derivative is nonnegative for every
\(L\succeq0\), which means \(S\succeq0\). Taking \(L=G\) reduces \(G_s\) to the constant path \(G_s \equiv G\), so
\(\tr(SG)=0\), and for positive semidefinite \(S\) and \(G\) this forces
\(SG=0\). Every row of \(S\) has a positive off-diagonal entry
(\cref{lem:row-positive}), so \(k=S\one\) is a kernel vector of \(G\) with
positive entries, and \(G\) is singular (\cref{prop:minimizer}). Testing
\(S\succeq0\) on the vectors \(\varepsilon_i-a_i\one\), where
\(a_i=k_i/\sum_jk_j\), gives for each row \(i\) of \(G\) (\cref{lem:row-bound})
\[
 (n-2)\Bigl(1-\frac{A_i}{A_*}\Bigr)\ \ge\ na_i-1,\qquad
 A_i=\frac1{k_i}\sum_{j\ne i}\sqrt{1-g_{ij}^2}\,q_{ij},\quad
 A_*=\sqrt{\frac{n-2}n}.
\]
The right sides sum to zero, so unless every \(a_i=1/n\), some row has a
negative right side: the row inequalities are useful only in aggregate.

\emph{The conditional masses} (\cref{sec:boundary,sec:mass}). Let
\(\beta_i=(\beta_{ij})_j\) be the vector of the offsets above, over \(j\ne i\)
with \(g_{ij}>-1\) (a coordinate with \(g_{ij}=-1\) equals \(-t\) and imposes
no constraint). Let \(h_i(b)\) be the conditional probability in
\eqref{eq:intro-derivative} with a free vector of offsets \(b=(b_j)_j\) in
place of \(\beta_i\), so that \(H_i(t)=h_i(\beta_i)\); as with \(F_G(t)=f_G(t\one)\),
the capital letter is the probability at the common threshold \(t\), while
\(h_i\) takes standardized offsets (so \(H_i(t)\ne h_i(t\one)\) in general). By
\cref{prop:boundary-calculus},
\(\partial_jh_i(\beta_i)=\sqrt{1-g_{ij}^2}\,q_{ij}/\varphi(t)\), so the same
weights govern the conditional masses. Gaussian measure is log-concave in the
offsets, so \(1/h_i\) is convex, and its tangent inequality at \(\beta_i\),
\[
 \frac{H_i(t)^2}{h_i(\rho_n(t)\one)}-H_i(t)\ \ge\
 (\beta_i-\rho_n(t)\one)\cdot\nabla h_i(\beta_i)
\]
(see \eqref{eq:reciprocal}), compares \(H_i(t)\) with the mass
\(h_i(\rho_n(t)\one)\) at equal offsets. The induction hypothesis gives
\(h_i(\rho_n(t)\one)\ge D_{n-1}(\rho_n(t))\). A dilation identity
\eqref{eq:dilation} splits \(H_i(t)\) into two positive parts. Its constants
match those of the row inequalities only in dimension \(n-2\), and the
conditional regions lie in dimension \(\rank G-1\le n-2\) because \(G\) is
singular.

\emph{Aggregation} (\cref{lem:scalar}). A scalar lemma sums the \(n\)
resulting inequalities to \eqref{eq:intro-aggregate} and identifies its
equality case.

\subsection{Consequences and applications}

Integrated quantities such as \(\E M_G\) and \(\E e^{\lambda M_G}\), the latter of which
governs maximum-likelihood decoding, already follow from
\eqref{eq:predecessor}; we recover them in \cref{sec:integrated} as special
cases of \cref{cor:integrated}. However, a hard threshold appears as soon as a
receiver may declare that no signal is present, and \eqref{eq:predecessor}
no longer applies.

In \cref{sec:identification} we allow such an inactive state, bound the
probability of a false alarm (announcing a signal when none was sent, i.e., a
false positive) by \(\alpha\), and ask which signal geometry
maximizes the average probability of correct identification. For unit signal
directions \(u_1,\ldots,u_n\) with Gram matrix \(G\), the optimal receiver
announces a label \(i\) maximizing \(\ip{u_i}{Y}\) when
\(\max_i\ip{u_i}{Y}>t_G(\alpha)\), where \(Y\) is the observation and
\(t_G(\alpha)\) is the upper \(\alpha\)-quantile of \(M_G\). At a
deterministic amplitude \(\lambda>0\) its success probability is the truncated
exponential moment
\[
 C_G(\lambda,\alpha)=\frac{e^{-\lambda^2/2}}n\,
 \E\bigl[e^{\lambda M_G}\ind_{\{M_G>t_G(\alpha)\}}\bigr].
\]
This depends on the tail of \(M_G\) at individual thresholds, which the
smoothed comparison does not determine. Using \cref{thm:main},
\cref{thm:identification} shows that the regular simplex is the unique
optimal geometry at every positive false-alarm level, for every law of a
common nonnegative amplitude not concentrated at zero, whenever the signal
dimension is at least \(n-1\). At \(\alpha=1\) the constraint disappears and
the result reduces to the Weak Simplex Conjecture.

In \cref{sec:graphs} we prove a lower bound on the independence ratio of
arc-transitive graphs of every degree. Harangi and Vir\'ag obtained
independent sets as the strict local maxima of a Gaussian eigenvector
\cite{HV2015}, and showed that for arc-transitive graphs the resulting bound
would follow from a conjectured extremal property of spherical caps
\cite[Section~2.2]{HV2015}, which we recall in \cref{sec:open}; they proved
the bound for cubic vertex-transitive graphs and for degree-four arc-transitive graphs. \Cref{cor:graph} proves the
bound in every degree from \eqref{eq:predecessor}. \Cref{thm:main} gives the underlying bound on the
probability of a local maximum at every eigenvalue (\cref{app:graphs}),
although only the least eigenvalue matters for independent sets.

\subsection{Organization}

The rest of the paper follows the proof. \Cref{sec:global} reduces
\cref{thm:main} to the aggregate bound \eqref{eq:intro-aggregate} at a
minimizing covariance (\cref{prop:interface}), using compactness and a
joint-minimum argument. \Cref{sec:boundary} computes the first and second
threshold derivatives of \(f_G\) in terms of the conditional masses and the
pair weights. \Cref{sec:variation} derives the first-order conditions
\(S\succeq0\) and \(SG=0\) at a minimizer and turns them into one inequality
for each row of \(G\). \Cref{sec:mass} combines log-concavity, dilation, and
the scalar lemma into the aggregate bound and its equality case.
\Cref{sec:consequences} contains the applications and \cref{sec:open} two open
problems. \Cref{app:equivalence} completes the proof of
\cref{cor:geometric} and proves its converse, \cref{app:identification}
collects refinements of the identification theorem, \cref{app:graphs}
gives the local comparison at every eigenvalue, and \cref{app:ai} describes
how AI was used.

The proof given here is complete, but it is presented as we first came to
understand it. We expect to find better ways to frame several of its steps and
will revise the exposition in later versions. The author welcomes discussion of
the proof and would be glad to collaborate with anyone interested in
simplifying it or understanding it more deeply.

\section{Reduction to a derivative bound at a minimizer}\label{sec:global}

In this section we reduce \cref{thm:main} to \cref{prop:interface}. Let
\(n\ge3\) and \(t>0\), and assume \eqref{eq:main} in dimension \(n-1\). Then
\cref{prop:interface,lem:recurrence} give \(F'_G(t)\ge D'_n(t)\) for every
covariance \(G\) minimizing \(\Gamma\mapsto F_\Gamma(t)\) on \(\cE_n\).
\Cref{prop:interface} is proved in
\cref{sec:boundary,sec:variation,sec:mass}.

To minimize over covariances we need compactness and continuity.

\begin{lemma}\label{lem:compact}
The set \(\cE_n\) is compact, and \((G,x)\mapsto f_G(x)\) is continuous on
\(\cE_n\times\R^n\). For every \(t>0\),
\[
 F_G(t)\ge\gamma_n(B(0,t))>0\qquad(G\in\cE_n).
\]
\end{lemma}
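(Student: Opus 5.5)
Compactness of \(\cE_n\) is immediate. It is a subset of the symmetric matrices cut out by the closed conditions \(G=G^\top\), \(G\succeq0\) (that is, \(v^\top Gv\ge0\) for every \(v\)), and \(g_{ii}=1\). It is bounded because positive semidefiniteness forces \(|g_{ij}|\le\sqrt{g_{ii}g_{jj}}=1\).

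For continuity, I would realize all the vectors \(X^G\) on one probability space. Take a standard Gaussian \(W\in\R^n\) and set \(X^G=G^{1/2}W\), using the positive semidefinite square root. The map \(G\mapsto G^{1/2}\) is continuous on the positive semidefinite cone. So if \((G_k,x_k)\to(G,x)\), then \(X^{G_k}-x_k\to X^G-x\) almost surely. It follows that \(\ind\{X^{G_k}\le x_k\}\to\ind\{X^G\le x\}\) at every sample point where no coordinate satisfies \(X^G_i=x_i\). For each \(i\), \(X^G_i\sim\cN(0,1)\) because \(g_{ii}=1\), so \(\PP\{X^G_i=x_i\}=0\). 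The exceptional set is therefore null, and dominated convergence gives \(f_{G_k}(x_k)\to f_G(x)\).

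For the lower bound, write \(X^G=G^{1/2}W\) again. Since \(g_{ii}=1\), the \(i\)-th row \(r_i\) of \(G^{1/2}\) satisfies \(\norm{r_i}^2=(G^{1/2}G^{1/2})_{ii}=1\). By Cauchy--Schwarz, \(X^G_i=\ip{r_i}{W}\le\norm{W}\). Hence on the event \(\{\norm{W}\le t\}\) every coordinate satisfies \(X^G_i\le t\), and so \(F_G(t)\ge\PP\{\norm W\le t\}=\gamma_n(B(0,t))\). This is positive for \(t>0\) because the standard Gaussian density is positive on the ball.

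The only step needing any care is the continuity argument. It fails if one ignores degenerate \(G\), since the convergence is not uniform near the boundary. But the boundary events \(\{X^G_i=x_i\}\) are null thanks to unit variances, so dominated convergence handles it.
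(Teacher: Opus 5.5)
Your proposal is correct and follows the paper's proof essentially step for step. Compactness comes from closedness and the bound \(|g_{ij}|\le1\); continuity comes from the coupling \(X^G=G^{1/2}W\) with dominated convergence off the null set \(\bigcup_i\{X^G_i=x_i\}\); and the ball bound comes from the unit-norm rows of \(G^{1/2}\). The one difference is that you quote continuity of \(G\mapsto G^{1/2}\), which the paper justifies briefly: the roots \(G_k^{1/2}\) are bounded, and any subsequential limit is a positive semidefinite square root of \(G\), hence equals \(G^{1/2}\) by uniqueness.
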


\begin{proof}
Positive semidefiniteness gives \(|g_{ij}|\le1\), so \(\cE_n\) is closed and
bounded. If \(G_k\to G\), couple \(X^{G_k}=G_k^{1/2}Z\) and \(X^G=G^{1/2}Z\)
with the same standard Gaussian \(Z\in\R^n\). The positive semidefinite square
root is continuous: the roots \(G_k^{1/2}\) are bounded, and every
subsequential limit is positive semidefinite and squares to \(G\), so it
equals \(G^{1/2}\) by uniqueness of the positive semidefinite root. Thus
\(X^{G_k}\to X^G\) almost surely. If also \(x^k\to x\), the indicators
\(\ind_{\{X^{G_k}\le x^k\}}\) converge to \(\ind_{\{X^G\le x\}}\) outside
\(\bigcup_i\{X^G_i=x_i\}\), a null event because each \(X^G_i\) is standard
Gaussian. Dominated convergence proves continuity. Every row of \(G^{1/2}\)
has norm one, so \(|Z|<t\) implies \(X^G_i<t\) for every \(i\), which gives
the lower bound.
\end{proof}

Recall from \eqref{eq:intro-recurrence} that \(\rho_n(t)=t\sqrt{n/(n-2)}\)
for \(n\ge3\). The following proposition is the only property of minimizing covariances that
the induction uses. Its proof is completed in \cref{sec:interface-end}.

\begin{proposition}\label{prop:interface}
Let \(n\ge3\) and \(t>0\), and suppose \(G\) minimizes \(\Gamma\mapsto F_\Gamma(t)\) on \(\cE_n\). If \(p_0>0\) satisfies
\begin{equation}\label{eq:lower-reference}
 F_\Gamma(\rho_n(t))\ge p_0\qquad\text{for every }\Gamma\in\cE_{n-1},
\end{equation}
then \(F_G\) is differentiable at \(t\) and
\begin{equation}\label{eq:interface}
 F'_G(t)\ge n\varphi(t)p_0.
\end{equation}
If equality holds in \eqref{eq:interface}, then \(G=\Delta_n\).
\end{proposition}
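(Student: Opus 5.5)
The plan follows the outline in the introduction. Fix \(n\ge3\), \(t>0\) and a minimizer \(G\).

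\textbf{Step 1: reduce to no coincident coordinates and write the derivative.} First I show that no two coordinates coincide, i.e.\ \(g_{ij}<1\) for \(i\ne j\). If \(g_{ij}=1\), the problem collapses to dimension \(n-1\) (or lower). In that case a perturbation separating the two coordinates, e.g.\ the noise path with \(L=(\varepsilon_i-\varepsilon_j)(\varepsilon_i-\varepsilon_j)^\top\), strictly decreases \(F_\Gamma(t)\), which contradicts minimality. With coincidences excluded, \cref{prop:boundary-calculus} gives differentiability and the formula
\[
F'_G(t)=\varphi(t)\sum_i H_i(t),\qquad H_i(t)=h_i(\beta_i).
\]
It then suffices to prove \(\sum_i H_i(t)\ge np_0\), with equality only at \(\Delta_n\).

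\textbf{Step 2: first-order conditions.} Along the noise path \(G_s\), \cref{prop:noise} gives the one-sided derivative \(\tfrac12\tr(SL)\). Minimality then forces \(S\succeq0\). Choosing \(L=G\), which gives a constant path, yields \(\tr(SG)=0\), hence \(SG=0\). By \cref{lem:row-positive}, \(k=S\one\) has positive entries and lies in \(\ker G\), so \(G\) is singular. Testing \(S\succeq0\) on \(\varepsilon_i-a_i\one\) gives the row inequalities of \cref{lem:row-bound}:
\[
(n-2)(1-A_i/A_*)\ge na_i-1,\qquad \sum_i a_i=1.
\]

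\textbf{Step 3: per-row mass bounds.} For each \(i\), the conditional law given \(X_i=t\) lives in dimension \(\rank G-1\le n-2\). Any conditional correlation matrix of size \(n-1\) lies in \(\cE_{n-1}\), or is degenerate in a way that only helps, so hypothesis \eqref{eq:lower-reference} gives \(h_i(\rho_n(t)\one)\ge p_0\). By log-concavity, \(1/h_i\) is convex, and its tangent inequality reads
\[
H_i^2/p_0-H_i\ge H_i^2/h_i(\rho\one)-H_i\ge(\beta_i-\rho\one)\cdot\nabla h_i(\beta_i).
\]
Using \(\partial_jh_i=\sqrt{1-g_{ij}^2}\,q_{ij}/\varphi(t)\), the right side is \((\beta_i\cdot\nabla h_i)-\rho\,k_iA_i/\varphi(t)\). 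The dilation identity \eqref{eq:dilation} expresses \(\beta_i\cdot\nabla h_i\) in terms of \(H_i\) and the \(q_{ij}\). This identity uses that the conditional region lies in dimension \(\le n-2\), which makes its constants match \(A_*\). The outcome should be an inequality relating \(H_i/p_0\), \(A_i/A_*\), and \(k_i\).

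\textbf{Step 4: aggregate.} Substitute the row inequalities from Step 2 and sum over \(i\) using the scalar lemma \cref{lem:scalar}. The convexity of the resulting scalar function lets the terms \(na_i-1\), which sum to zero, cancel in aggregate, giving \(\sum_i H_i\ge np_0\). The same lemma's equality case forces every \(a_i=1/n\) and every \(A_i=A_*\), hence equal offsets \(\beta_{ij}=\rho_n(t)\), i.e.\ \(g_{ij}=-1/(n-1)\) and \(G=\Delta_n\).

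\textbf{Main obstacle.} I expect the hardest part to be Step 3, specifically getting the dilation identity's constants to align exactly with those in the row inequalities, so that the scalar lemma can close. This requires carefully exploiting the singularity of \(G\). I would first try scaling the offsets \(b\mapsto rb\) in \(h_i\) and differentiating at \(r=1\), with the Gaussian density in dimension \(n-2\) producing the factor \(n-2\).
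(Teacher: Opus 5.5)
Your outline is the paper's, but the step that actually closes the argument is missing, and the mechanism you describe for it (the terms \(na_i-1\) cancelling in aggregate by convexity) does not work.

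The quantity \(\beta_i\cdot\nabla h_i(\beta_i)\) has two expressions, and you need both. The first is a direct computation: \(\beta_{ij}\sigma_{ij}=t(1-g_{ij})\) gives \(\beta_i\cdot\nabla h_i(\beta_i)=tk_i/\varphi(t)\). With the tangent inequality and \cref{lem:row-bound} this yields
\[
 \frac{H_i^2}{p_0}-H_i\ \ge\ u_i(na_i-1),\qquad
 u_i=\frac{tk_i}{(n-2)\varphi(t)},\quad a_i=\frac{u_i}{U},\quad U=\sum_ju_j .
\]
Summing these gives only \(\sum_iH_i^2\ge p_0\sum_iH_i\), i.e.\ \(\max_iH_i\ge p_0\), not \(\sum_iH_i\ge np_0\). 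Indeed, take \(U\) small, \(a_1\) near \(1/2\), and the other \(a_j\) equal. Then these inequalities alone allow every \(H_j\) with \(j\ne1\) to be near \(0\).

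The second expression is the dilation identity \eqref{eq:dilation}, applied after padding \(Q_i(\beta_i)\) to \(\R^{n-2}\); the padding is possible exactly because \(\rank G\le n-1\). It shows that \(u_i\) is a genuine part of \(H_i\): \(H_i=u_i+e_i\) with \(e_i=\frac1{n-2}\int_{Q_i(\beta_i)}|y|^2\,d\gamma_{n-2}(y)>0\). Substituting this, the \(-u_i\) cancels and each row becomes
\[
 \frac{H_i^2}{np_0}-\frac{u_i^2}{U}\ \ge\ \frac{e_i}{n}.
\]
This is exactly the hypothesis of \cref{lem:scalar} with \(\omega_i=1/n\). That lemma is proved not by summing but by choosing one index with \(e_i/E\le1/n\) and using \(H_i^2/(U+E)\le u_i^2/U+e_i^2/E\).

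Your equality case also skips steps. The equality case of \cref{lem:scalar} gives \(H_i=p_0\) and \(a_i=1/n\). The chain is then tight, so \(A_i=A_*=\sqrt{1-2a_i}\), which forces equality in the Cauchy--Schwarz step of \cref{lem:row-bound}. That gives \(g_{ij}=-1/(n-1)\) only for pairs with \(q_{ij}>0\). Pair weights can vanish for nonantipodal pairs, so equal offsets do not follow directly. You need \cref{lem:row-positive} (a maximal entry of each row carries a positive weight) to get \(g_{ij}\le-1/(n-1)\) for all \(i\ne j\), and then \(0\le\one^\top G\one\) to force \(G=\Delta_n\).

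Finally, in Step 1, \cref{prop:noise} does not apply when \(g_{ij}=1\), so your strict decrease along the noise path needs a separate estimate. The claim is true, with a loss of order \(\sqrt s\). The paper instead replaces \(X_i\) by an independent standard Gaussian, which multiplies \(F_G(t)\) by \(\Phi(t)<1\).
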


For the simplex, \eqref{eq:interface} holds with equality and
\(p_0=D_{n-1}(\rho_n(t))\). This is the classical recurrence
\eqref{eq:recurrence} for the largest
normalized deviation from a sample mean
\cite[p.~469, (29)]{McKay1935}, \cite[p.~120, (22)]{Nair1948}; Balakrishnan
derives it in our normalization by differentiating through a conditioned
coordinate \cite[pp.~494--495, (4.1)--(4.3)]{Balakrishnan1961}. We include
the short calculation because it fixes the normalization used in the
induction.

\begin{lemma}\label{lem:recurrence}
For \(n\ge3\) and \(t>0\),
\begin{equation}\label{eq:recurrence}
 D'_n(t)=n\varphi(t)D_{n-1}(\rho_n(t)).
\end{equation}
Also \(D_n(t)=0\) for \(t\le0\), and \(D_2(t)=2\Phi(t)-1\) for \(t>0\).
\end{lemma}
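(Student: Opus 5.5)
We prove \cref{lem:recurrence} by a direct calculation with the representation \eqref{eq:simplex-vector}. We condition on the largest coordinate and identify the conditional law of the remaining coordinates as a scaled simplex vector of dimension \(n-1\).

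\emph{The two easy statements.} Since \(\sum_i\xi_i=0\), we have \(\max_i\xi_i\ge0\) almost surely. The event \(\max_i\xi_i=0\) forces \(\xi=0\), which has probability zero because \(\Delta_n\neq0\). Hence \(D_n(t)=0\) for \(t\le0\). For \(n=2\), \(\xi_2=-\xi_1\) with \(\xi_1\) standard. So for \(t>0\) we get \(D_2(t)=\PP\{|\xi_1|\le t\}=2\Phi(t)-1\).

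\emph{The recurrence.} Fix \(n\ge3\). The coordinates of \(\xi\) are almost surely distinct, so by symmetry
\[
 D_n(t)=n\,\PP\{\xi_1\le t,\ \xi_j\le\xi_1\ (j\ne1)\}
 =n\int_{-\infty}^t\varphi(s)\,\PP\{\xi_j\le s\ (j\ne1)\mid\xi_1=s\}\,ds .
\]
Alternatively, one can differentiate \(\PP\{\xi_j\le t\ \forall j\}\) directly and sum the \(n\) boundary terms \(\varphi(t)\PP\{\xi_j\le t\ (j\ne i)\mid \xi_i=t\}\). Both routes give
\[
 D_n'(t)=n\varphi(t)\,\PP\{\xi_j\le t\ (j\ge2)\mid\xi_1=t\}.
\]
Using the boundary formulation avoids needing the ordering. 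To justify the differentiation, write the probability as an iterated integral in the density of \((\xi_1,\ldots,\xi_{n-1})\), which is nondegenerate since \(\xi_n=-\sum_{i<n}\xi_i\). Alternatively, use continuity of the conditional probability in \(t\).

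Now compute the conditional law. Given \(\xi_1=s\), the vector \((\xi_2,\ldots,\xi_n)\) is Gaussian with mean \(g_{1j}s=-s/(n-1)\) in each coordinate. Its covariance is
\[
 \Delta_n'-\frac{\one\one^\top}{(n-1)^2},
\]
where \(\Delta_n'\) is the \((n-1)\times(n-1)\) block of \(\Delta_n\). Explicitly, the diagonal entries are \(1-1/(n-1)^2=n(n-2)/(n-1)^2\). The off-diagonal entries are \(-1/(n-1)-1/(n-1)^2=-n/(n-1)^2\). This covariance equals \(\frac{n(n-2)}{(n-1)^2}\,\Delta_{n-1}\), since \(\Delta_{n-1}\) has off-diagonal entries \(-1/(n-2)\). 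Therefore, given \(\xi_1=t\), the vector \((\xi_2,\ldots,\xi_n)\) has the law of \(-\frac{t}{n-1}\one+\frac{\sqrt{n(n-2)}}{n-1}\,\xi'\), where \(\xi'\) is a regular simplex vector in dimension \(n-1\).

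The condition \(\xi_j\le t\) for all \(j\ge2\) becomes
\[
 \max_j\xi'_j\le \frac{(n-1)}{\sqrt{n(n-2)}}\Bigl(t+\frac t{n-1}\Bigr)=\frac{nt}{\sqrt{n(n-2)}}=t\sqrt{\frac n{n-2}}=\rho_n(t).
\]
This gives \(D_n'(t)=n\varphi(t)D_{n-1}(\rho_n(t))\).

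The only delicate step is justifying the derivative of the distribution function as the sum of boundary terms when \(\Delta_n\) is singular. The integral representation over the ordered event handles this cleanly. For \(t>0\), its integrand \(s\mapsto\varphi(s)D_{n-1}(\rho_n(s))\) is continuous, so the fundamental theorem of calculus applies. Continuity holds because \(D_{n-1}\) is continuous by \cref{lem:compact}.
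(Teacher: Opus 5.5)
Your proof is correct. The conditional-law computation (conditional mean \(-t/(n-1)\), conditional covariance \(\frac{n(n-2)}{(n-1)^2}\Delta_{n-1}\), standardized offset \(\rho_n(t)\)) is exactly the paper's.

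Where you differ is in how you get \(D_n'(t)=n\varphi(t)\,\PP\{\xi_j\le t\ (j\ne1)\mid\xi_1=t\}\). The paper invokes the general threshold-derivative formula \(F_G'(t)=\varphi(t)\sum_iH_i(t)\) of \cref{prop:boundary-calculus}. That formula is proved later, via \cref{lem:offsets}, for any possibly singular \(G\) with distinct coordinates. You instead use exchangeability of \(\xi\) and the a.s.\ distinctness of its coordinates to write
\[
 D_n(t)=n\int_{-\infty}^t\varphi(s)\,D_{n-1}(\rho_n(s))\,ds,
\]
and then apply the fundamental theorem of calculus, which needs only continuity of \(D_{n-1}\). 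The integral identity is rigorous by Fubini once you write \((\xi_2,\ldots,\xi_n)=-\xi_1\one/(n-1)+V\) with \(V\) independent of \(\xi_1\).

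What each route buys:
\begin{itemize}
\item Your route is self-contained and avoids the forward reference. It yields the classical integral form of the recurrence, valid for every real \(t\), and the singularity of \(\Delta_n\) never matters.
\item The paper's route costs nothing extra, because \cref{prop:boundary-calculus} is needed anyway for general \(G\). There no symmetry reduces \(F_G(t)\) to a single ordered event.
\end{itemize}

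Your parenthetical alternative, the iterated integral in the density of \((\xi_1,\ldots,\xi_{n-1})\), is only a sketch. It is also unnecessary, since the ordered-event argument suffices on its own.
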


\begin{proof}
The vanishing follows from \(\sum_i\xi_i=0\): for \(t<0\) the event
\(\{\xi\le t\one\}\) is empty, and for \(t=0\) it forces every coordinate to
vanish, which has probability zero. For \(n=2\) the simplex vector is
\((Z,-Z)\) with \(Z\) standard Gaussian, so \(D_2(t)=\PP\{|Z|\le t\}\).

For \(n\ge3\), condition one simplex coordinate \(\xi_i\) to equal \(t\). The remaining
coordinates have conditional mean \(-t/(n-1)\), and their conditional
variances and covariances are
\[
 1-\frac1{(n-1)^2}=\frac{n(n-2)}{(n-1)^2}
 \qquad\text{and}\qquad
 -\frac1{n-1}-\frac1{(n-1)^2}=-\frac n{(n-1)^2},
\]
respectively. Their conditional correlations are therefore
\(-1/(n-2)\), the off-diagonal entry of \(\Delta_{n-1}\). Standardizing,
\[
 \PP\{\xi_j\le t\ (j\ne i)\mid\xi_i=t\}
 =D_{n-1}\Bigl(\frac{t+t/(n-1)}{\sqrt{n(n-2)}/(n-1)}\Bigr)
 =D_{n-1}(\rho_n(t)).
\]
The identity \eqref{eq:intro-derivative}, proved in
\cref{prop:boundary-calculus} without using \cref{prop:interface}, now gives
\eqref{eq:recurrence}.
\end{proof}

Now we combine \cref{lem:compact,prop:interface,lem:recurrence}.

\begin{proof}[Proof of \cref{thm:main} from \cref{prop:interface}]
For \(t\le0\) the inequality holds because \(D_n(t)=0\). We proceed by
induction on \(n\).

For \(n=2\) and \(t>0\), inclusion--exclusion gives
\[
 F_G(t)=2\Phi(t)-1+\PP\{X^G_1>t,\ X^G_2>t\}\ge D_2(t).
\]
The last probability is positive if \(-1<g_{12}<1\), since the bivariate
density is positive, and it equals \(1-\Phi(t)>0\) if \(g_{12}=1\). It
vanishes when \(g_{12}=-1\), that is, when \(G=\Delta_2\). This proves the
theorem for \(n=2\).

Now let \(n\ge3\) and assume \eqref{eq:main} with \(n-1\) in place of \(n\). Suppose,
seeking a contradiction, that \(F_{G_0}(t_0)-D_n(t_0)=-\tau<0\) for some
\(G_0\in\cE_n\) and \(t_0>0\). We tilt the difference by a small linear term so
that it attains a negative minimum at an interior threshold. Set \(\varepsilon=\tau/(2t_0)\) and
\[
 \Psi(G,t)=F_G(t)-D_n(t)+\varepsilon t,\qquad t>0.
\]
Then \(\Psi(G_0,t_0)=-\tau/2\), and uniformly in \(G\),
\[
 \Psi(G,t)\ge-D_n(t)+\varepsilon t\longrightarrow0\quad(t\downarrow0),
 \qquad
 \Psi(G,t)\ge-1+\varepsilon t\longrightarrow\infty\quad(t\to\infty).
\]
Choose \(0<t_1<t_0<t_2\) so that \(\Psi>-\tau/2\) for \(t\le t_1\) and
\(t\ge t_2\). By \cref{lem:compact}, \(\Psi\) attains its minimum over
\(\cE_n\times[t_1,t_2]\), and this minimum is at most \(-\tau/2\), so it is
attained at some \((G_*,t_*)\) with \(t_1<t_*<t_2\). Holding \(t_*\) fixed shows
that \(G_*\) minimizes \(G\mapsto F_G(t_*)\).

By the induction hypothesis, \eqref{eq:lower-reference} holds with
\(p_0=D_{n-1}(\rho_n(t_*))\), and \(p_0>0\) by \cref{lem:compact}. Thus
\cref{prop:interface,lem:recurrence} give \(F'_{G_*}(t_*)\ge D'_n(t_*)\). On
the other hand, \(t\mapsto\Psi(G_*,t)\) is minimized at the interior point
\(t_*\) and is differentiable there, so
\[
 0=\partial_t\Psi(G_*,t_*)=F'_{G_*}(t_*)-D'_n(t_*)+\varepsilon
 \ge\varepsilon>0,
\]
a contradiction. This proves \eqref{eq:main} for \(n\).

Finally, suppose \(F_G(t)=D_n(t)\) at some \(t>0\). By
\eqref{eq:main}, \(G\) minimizes \(\Gamma\mapsto F_\Gamma(t)\), and the nonnegative
function \(F_G-D_n\) has a minimum at \(t\). It is differentiable there by
\cref{prop:interface,lem:recurrence}, so
\(F'_G(t)=D'_n(t)=n\varphi(t)D_{n-1}(\rho_n(t))\). This is equality in
\eqref{eq:interface} with \(p_0=D_{n-1}(\rho_n(t))\), and
\cref{prop:interface} gives \(G=\Delta_n\). The converse is immediate.
\end{proof}

The induction uses only the inequality \eqref{eq:main} in dimension \(n-1\).
The equality case in dimension \(n\) comes instead from the equality clause
of \cref{prop:interface}, which applies at each fixed \(t>0\). This is why
\eqref{eq:main-equality} holds at a single threshold. The next three sections
prove \cref{prop:interface}.

\section{Derivatives at the threshold}\label{sec:boundary}

We would like to differentiate \(f_G\) twice at \(t\one\).
\Cref{prop:boundary-calculus} gives
\[
 \partial_if_G(t\one)=\varphi(t)H_i(t),\qquad
 \partial_{ij}f_G(t\one)=q_{ij}\quad(i\ne j),
\]
with \(H_i(t)\) the conditional masses of \eqref{eq:intro-derivative} and
\(q_{ij}\) the pair weights \eqref{eq:q}. The same \(q_{ij}\) give the
covariance derivative in \cref{prop:noise}. The only subtlety is regularity.
The covariance may be singular, and the polyhedron
\(\{y:\ip{v_i}{y}\le t\}\) of the realization below may be unbounded, with
redundant or parallel constraints, so we prove differentiability directly.

Throughout this section \(n\ge3\), \(t>0\), and \(G\in\cE_n\) has distinct
coordinates:
\begin{equation}\label{eq:distinct}
 g_{ij}<1\qquad(i\ne j).
\end{equation}
We fix an intrinsic realization
\(X_i=\ip{v_i}{Z}\), where \(v_1,\ldots,v_n\) are unit vectors spanning
\(\R^r\), \(r=\rank G\), and \(Z\sim\gamma_r\). A rank-one correlation matrix
has only two distinct normals \(\pm v\), so \eqref{eq:distinct} and
\(n\ge3\) force \(r\ge2\).

\subsection{Conditioning and the pair weights}

Conditioning on \(X_i=x_i\) amounts to writing \(Z=x_iv_i+Y\), where \(Y\) is
a standard Gaussian vector in \(v_i^\perp\); this is because the orthogonal
components of a standard Gaussian are independent. Put
\[
 J_i=\{j\ne i:-1<g_{ij}<1\},\qquad \sigma_{ij}=\sqrt{1-g_{ij}^2},
\]
and for \(j\in J_i\) set
\begin{equation}\label{eq:conditional-data}
 w_{ij}=\frac{v_j-g_{ij}v_i}{\sigma_{ij}},\qquad
 \beta_{ij}=t\sqrt{\frac{1-g_{ij}}{1+g_{ij}}}=\frac{t(1-g_{ij})}{\sigma_{ij}}.
\end{equation}
The vectors \(w_{ij}\) are unit vectors in \(v_i^\perp\). Given \(X_i=x_i\),
\(X_j\le x_j\) if and only if
\(\ip{w_{ij}}{Y}\le(x_j-g_{ij}x_i)/\sigma_{ij}\), and at \(x=t\one\) this
offset is \(\beta_{ij}>0\). If \(g_{ij}=-1\), the constraint \(X_j\le x_j\)
becomes \(-x_i\le x_j\), which holds strictly near \(t\one\); we omit it. For
a vector of offsets \(b=(b_j)_{j\in J_i}\in\R^{J_i}\), the conditional region
and its mass are
\begin{equation}\label{eq:conditional-mass}
 Q_i(b)=\{y\in v_i^\perp:\ip{w_{ij}}{y}\le b_j\ (j\in J_i)\},\qquad
 h_i(b)=\gamma_{r-1}(Q_i(b)).
\end{equation}
At the threshold \(t\) the offsets are \(\beta_i=(\beta_{ij})_{j\in J_i}\), so
\[
 H_i=H_i(t)=h_i(\beta_i)=\PP\{X_j\le t\ (j\ne i)\mid X_i=t\}.
\]
Throughout, the index \(i\) of \(Q_i\), \(h_i\), \(\beta_i\), and \(H_i\) is the
conditioned coordinate, and \(j\) indexes the offsets. The standardized
offsets \(\beta_{ij}\) and the formula for \(F'_G\) below appear in
\cite[p.~494, (4.1)]{Balakrishnan1961}.

Conditioning on two coordinates works the same way. For \(-1<g_{ij}<1\),
conditioning on \(X_i=X_j=t\) amounts to writing
\begin{equation}\label{eq:pair-decomposition}
 Z=tv_i+\beta_{ij}w_{ij}+Y',
\end{equation}
where \(Y'\) is a standard Gaussian vector in
\(\operatorname{span}\{v_i,v_j\}^\perp\), which may have dimension zero.
Indeed \(\ip{v_j}{tv_i+\beta_{ij}w_{ij}}=tg_{ij}+\sigma_{ij}\beta_{ij}=t\). Since
\(\beta_{ij}/\sigma_{ij}=t/(1+g_{ij})\), in particular
\begin{equation}\label{eq:pair-mean}
 \E[X_\ell\mid X_i=X_j=t]=t\,\frac{g_{i\ell}+g_{j\ell}}{1+g_{ij}}
 \qquad(\ell\ne i,j).
\end{equation}
For such pairs we define the \emph{pair weights}
\begin{equation}\label{eq:q}
 \begin{split}
 q_{ij}&=\varphi_2(t,t;g_{ij})\,
       \PP\{X_\ell\le t\ (\ell\ne i,j)\mid X_i=X_j=t\},\\
 \varphi_2(t,t;c)&=\frac1{2\pi\sqrt{1-c^2}}\exp\!\left(-\frac{t^2}{1+c}\right),
 \end{split}
\end{equation}
where \(\varphi_2(t,t;c)\) is the standard bivariate Gaussian density with
correlation \(c\), evaluated at \((t,t)\). Set \(q_{ij}=0\) if \(g_{ij}=-1\), and
\(q_{ii}=0\). Then \(q_{ij}=q_{ji}\ge0\).

\subsection{Differentiability}

For bounded polyhedra with positive offsets and distinct prescribed normals,
the offset derivative \eqref{eq:offset-derivative} below follows from the
general-volume formula
of Gardner, Hug, Weil, Xing, and Ye \cite[Theorem~5.3]{GHWXY2019} and its
finite-support form \cite[(22), (25)--(28), and (36)]{GHXY2020}, in dimension
at least two; Huang, Xi, and Zhao state the Gaussian case explicitly
\cite[Theorem~3.3]{HXZ2020}. Our conditional regions can be unbounded and can
lie in dimension \(r-1=1\), so we prove \eqref{eq:offset-derivative} directly.

\begin{lemma}\label{lem:offsets}
Let \(w_1,\ldots,w_m\) be unit vectors in \(\R^d\), \(d\ge1\), and put
\[
 h(b)=\gamma_d\{y:\ip{w_j}{y}\le b_j,\ 1\le j\le m\}.
\]
On the open set of \(b\) for which no two of the hyperplanes
\(\ip{w_j}{y}=b_j\) coincide, \(h\) is \(C^1\), with
\begin{equation}\label{eq:offset-derivative}
 \partial_jh(b)=\varphi(b_j)\,
 \PP\{\ip{w_k}{W}\le b_k\ (k\ne j)\mid \ip{w_j}{W}=b_j\},
 \qquad W\sim\gamma_d.
\end{equation}
\end{lemma}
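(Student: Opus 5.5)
We prove \cref{lem:offsets} by disintegrating the Gaussian measure along the direction \(w_j\) and differentiating in \(b_j\) alone. We then check that the resulting formula is jointly continuous on the stated open set, which gives \(C^1\).

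\emph{Disintegration.} Fix \(j\) and write \(W=sw_j+Y\), where \(s=\ip{w_j}{W}\sim\cN(0,1)\) and \(Y\) is an independent standard Gaussian in \(w_j^\perp\) (a point mass at \(0\) if \(d=1\)). Put
\[
 g(s,b)=\PP\{\ip{w_k}{sw_j+Y}\le b_k\ (k\ne j)\},
\]
so that
\[
 h(b)=\int_{-\infty}^{b_j}\varphi(s)\,g(s,b)\,ds,
\]
where \(g(s,b)\) does not depend on \(b_j\). By the fundamental theorem of calculus, \(\partial_jh(b)=\varphi(b_j)g(b_j,b)\) at every \(b_j\) where \(s\mapsto g(s,b)\) is continuous. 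This is exactly \eqref{eq:offset-derivative}.

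\emph{Continuity.} It remains to show that \((s,b)\mapsto g(s,b)\) is continuous at \((b_j,b)\), jointly in all offsets, for \(b\) in the open set. Then \(\partial_jh\) exists and is continuous there, and since this holds for every \(j\), \(h\) is \(C^1\). As in \cref{lem:compact}, the indicator \(\ind\{\ip{w_k}{sw_j+Y}\le b_k\ (k\ne j)\}\) converges under perturbations of \((s,b)\) except on the union of the events \(\{\ip{w_k}{b_jw_j+Y}=b_k\}\), \(k\ne j\). Each such event involves the affine function \(y\mapsto\ip{w_k-g_{jk}w_j}{y}\), where \(g_{jk}=\ip{w_j}{w_k}\), and there are three cases.

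\begin{itemize}[leftmargin=*]
\item If \(w_k\ne\pm w_j\), the vector \(w_k-g_{jk}w_j\) is nonzero in \(w_j^\perp\). The event is a hyperplane in \(w_j^\perp\) and has Gaussian measure zero.
\item If \(w_k=w_j\), the event is \(\{b_j=b_k\}\), i.e.\ the hyperplanes \(\ip{w_j}{y}=b_j\) and \(\ip{w_k}{y}=b_k\) coincide. The hypothesis excludes this.
\item If \(w_k=-w_j\), the event is \(\{-b_j=b_k\}\), which again means the two hyperplanes coincide, and is excluded.
\end{itemize}

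In the excluded parallel cases, strict inequality persists for nearby \((s,b)\), so that constraint's indicator is locally constant. Dominated convergence then gives joint continuity of \(g\). The set where no two hyperplanes coincide is open because it is defined by finitely many strict conditions \(b_j\ne b_k\) or \(b_j\ne-b_k\), one for each parallel or antiparallel pair.

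\emph{Main obstacle.} The delicate point is the parallel-normal bookkeeping, together with the degenerate case \(d=1\), where \(w_j^\perp=\{0\}\) and the "hyperplane has measure zero" argument is replaced by a direct check. In dimension one every \(w_k=\pm w_j\), so only the second and third cases occur. There \(g(s,b)\) is an indicator of an interval condition on \(s\), continuous at \(s=b_j\) precisely when \(b_j\ne\pm b_k\) for the relevant \(k\).

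Unboundedness of the region causes no trouble, since only dominated convergence of bounded indicators is used. Redundant constraints also cause no trouble: they appear only as factors of \(g\) equal to \(1\) or \(0\) near the point.
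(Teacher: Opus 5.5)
Your proof is correct and follows essentially the same route as the paper. Both disintegrate along \(w_j\), apply the fundamental theorem of calculus in \(b_j\), and obtain joint continuity of the conditional probability by dominated convergence, since non-parallel constraints give null hyperplanes in \(w_j^\perp\) while parallel or antiparallel ones have a locally constant truth value because the hyperplanes do not coincide; your explicit treatment of \(d=1\) is a small addition the paper leaves implicit.
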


\begin{proof}
Write \(W=Tw_j+W'\), where \(T\sim\cN(0,1)\) and \(W'\) is a standard Gaussian
vector in \(w_j^\perp\), independent of \(T\). For \(k\ne j\) put
\(c_k=\ip{w_k}{w_j}\) and \(w_k'=w_k-c_kw_j\). Then
\[
 h(b)=\int_{-\infty}^{b_j}\varphi(s)\,R_j(s,b_{-j})\,ds,\qquad
 R_j(s,b_{-j})=\PP\{\ip{w_k'}{W'}\le b_k-c_ks\ (k\ne j)\}.
\]
If \(w_k'\ne0\), the scalar Gaussian \(\ip{w_k'}{W'}\) has no atom at any
threshold. If \(w_k'=0\), then \(w_k=\pm w_j\), and since the two hyperplanes
do not coincide, \(b_k-c_kb_j\ne0\); the corresponding deterministic inequality
has a locally constant truth value. Coupling through the same \(W'\), the
indicators in \(R_j\) therefore converge almost surely as
\((s,b_{-j})\to(b_j,b_{-j})\), and dominated convergence shows that \(R_j\) is
continuous there. The fundamental theorem of calculus gives
\eqref{eq:offset-derivative}, and the same argument with \(b\) varying shows
that this derivative is continuous. The excluded coincidences are finitely
many closed conditions, so their complement is open.
\end{proof}

We apply \cref{lem:offsets} twice. First we apply it in \(\R^r\) to the
normals \(v_1,\ldots,v_n\) with offsets \(x\) near \(t\one\), so that \(h\) is
\(f_G\) and the lemma's \(j\) is our \(i\). The hyperplanes
\(\ip{v_i}{y}=t\) are distinct by \eqref{eq:distinct}. Then, for each \(i\),
we apply it in \(v_i^\perp\) to the normals \(w_{ij}\), \(j\in J_i\), with
offsets \(b\) near \(\beta_i\), so that \(h\) is \(h_i\). The hyperplanes
\(\ip{w_{ij}}{y}=\beta_{ij}\), \(j\in J_i\), are distinct too. Indeed, if
\(w_{i\ell}=-w_{ij}\), the two hyperplanes differ because
\(\beta_{ij},\beta_{i\ell}>0\). If \(w_{ij}=w_{i\ell}\) and
\(\beta_{ij}=\beta_{i\ell}\), then \(g_{ij}=g_{i\ell}\), because
\(c\mapsto t\sqrt{(1-c)/(1+c)}\) is strictly decreasing on \((-1,1)\), and so
\(v_j=g_{ij}v_i+\sigma_{ij}w_{ij}=v_\ell\), contradicting \eqref{eq:distinct}.
Noncoincidence persists in a neighborhood of \(\beta_i\).

\Cref{sec:mass} also evaluates \(h_i\) at the equal offsets
\(\rho_n(t)\one\). There two conditional hyperplanes may coincide, so we
differentiate \(h_i\) only at \(\beta_i\). For example, let \(X=(Z_1,Z_2,(Z_1+Z_2)/\sqrt2)\) with \(Z_1,Z_2\) independent
standard Gaussians. Conditional on \(X_1=t>0\),
\[
 h_1(b)=\Phi(\min\{b_2,b_3\})\quad(b=(b_2,b_3)),\qquad
 \beta_1=(\beta_{12},\beta_{13})=(t,(\sqrt2-1)t).
\]
This mass is differentiable at \(\beta_1\) but not at equal offsets.

We can now compute the derivatives of \(f_G\) at \(t\one\).

\begin{proposition}\label{prop:boundary-calculus}
Under \eqref{eq:distinct}, \(f_G\) is \(C^2\) on a neighborhood of \(t\one\),
and at \(t\one\)
\begin{equation}\label{eq:threshold-derivatives}
 \begin{aligned}
 \partial_if_G&=\varphi(t)H_i,\\
 \partial_{ij}f_G&=q_{ij}&&(i\ne j),\\
 \partial_{ii}f_G&=-t\varphi(t)H_i-\sum_{j\ne i}g_{ij}q_{ij},\\
 F'_G(t)&=\varphi(t)\sum_iH_i.
 \end{aligned}
\end{equation}
The offset derivatives of the conditional masses involve the same weights:
\begin{equation}\label{eq:compatible}
 \partial_jh_i(\beta_i)=\frac{\sigma_{ij}}{\varphi(t)}\,q_{ij}
 \qquad(j\in J_i).
\end{equation}
\end{proposition}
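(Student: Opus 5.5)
The plan is to apply \cref{lem:offsets} twice, as announced before the statement, and then to compute everything by explicit Gaussian conditioning. First apply \cref{lem:offsets} in \(\R^r\) to the normals \(v_1,\ldots,v_n\) with offsets \(x\) near \(t\one\). The hyperplanes are distinct by \eqref{eq:distinct}, so \(f_G\) is \(C^1\) near \(t\one\), and
\[
 \partial_if_G(x)=\varphi(x_i)\,\PP\{X_j\le x_j\ (j\ne i)\mid X_i=x_i\}.
\]
By the conditioning described above, the conditional probability equals \(h_i(b(x))\), where \(b_j(x)=(x_j-g_{ij}x_i)/\sigma_{ij}\) for \(j\in J_i\). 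Constraints with \(g_{ij}=-1\) hold strictly near \(t\one\), since \(-x_i<x_j\) there, and can be dropped. At \(x=t\one\) this gives \(\partial_if_G=\varphi(t)H_i\). Then \(F_G'(t)=\sum_i\partial_if_G(t\one)\) follows from the chain rule.

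For the second derivatives, write \(\partial_if_G(x)=\varphi(x_i)h_i(b(x))\). Here \(b(x)\) is affine in \(x\), and \(b(x)\) stays near \(\beta_i\), where the conditional hyperplanes are noncoincident (as checked before the statement). So the second application of \cref{lem:offsets}, in \(v_i^\perp\), shows that \(h_i\) is \(C^1\) near \(\beta_i\). Hence \(\partial_if_G\) is \(C^1\) and \(f_G\) is \(C^2\) near \(t\one\).

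By the chain rule,
\[
 \partial_{ij}f_G=\varphi(t)\,\partial_jh_i(\beta_i)/\sigma_{ij}\quad(j\in J_i),
\]
\[
 \partial_{ii}f_G=-t\varphi(t)H_i-\varphi(t)\sum_{j\in J_i}\frac{g_{ij}}{\sigma_{ij}}\,\partial_jh_i(\beta_i).
\]
If \(g_{ij}=-1\), the \(x_j\)-dependence is absent near \(t\one\), so \(\partial_{ij}f_G=0=q_{ij}\). Thus every formula in \eqref{eq:threshold-derivatives} reduces to \eqref{eq:compatible}.

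To prove \eqref{eq:compatible}, use \eqref{eq:offset-derivative} inside \(v_i^\perp\):
\[
 \partial_jh_i(\beta_i)=\varphi(\beta_{ij})\,\PP\{\ip{w_{i\ell}}{Y}\le\beta_{i\ell}\ (\ell\in J_i\setminus\{j\})\mid\ip{w_{ij}}{Y}=\beta_{ij}\}.
\]
By the pair decomposition \eqref{eq:pair-decomposition}, conditioning \(Y\) on \(\ip{w_{ij}}{Y}=\beta_{ij}\) is the same as conditioning \(Z\) on \(X_i=X_j=t\). So the conditional probability is \(\PP\{X_\ell\le t\ (\ell\ne i,j)\mid X_i=X_j=t\}\), including the dropped \(\ell\) with \(g_{i\ell}=-1\), which hold automatically.

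It then remains to check the scalar identity
\[
 \varphi(t)\varphi(\beta_{ij})=\sigma_{ij}\,\varphi_2(t,t;g_{ij}).
\]
Indeed \(t^2+\beta_{ij}^2=t^2\bigl(1+(1-g)/(1+g)\bigr)=2t^2/(1+g)\), and \(1/(2\pi)=\sigma_{ij}\cdot 1/(2\pi\sigma_{ij})\). This gives \(\partial_jh_i(\beta_i)=\sigma_{ij}q_{ij}/\varphi(t)\), and substituting into the chain-rule formulas yields \(\partial_{ij}f_G=q_{ij}\) and the stated \(\partial_{ii}f_G\).

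The step I expect to need the most care is regularity rather than algebra: making sure the affine map \(b(x)\) keeps the offsets in the open noncoincidence set of \cref{lem:offsets}, and handling the degenerate cases \(g_{ij}=-1\) and \(r-1=1\). The lemma is stated for \(d\ge1\), and \(r\ge2\) holds as noted, so \(d=r-1\ge1\) is covered. Identifying the conditional law in \eqref{eq:offset-derivative} with the two-coordinate conditioning also needs the dropped constraints to be restored consistently.
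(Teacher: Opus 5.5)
Your proposal is correct and follows the paper's proof essentially step for step. You apply \cref{lem:offsets} first to the normals \(v_i\) and then to the conditional normals \(w_{ij}\) in \(v_i^\perp\), use the chain rule through the affine offsets \((x_j-g_{ij}x_i)/\sigma_{ij}\), and identify \(\partial_jh_i(\beta_i)=\sigma_{ij}q_{ij}/\varphi(t)\) via the pair decomposition and \(\varphi(t)\varphi(\beta_{ij})=\sigma_{ij}\varphi_2(t,t;g_{ij})\); the only difference is that you derive the chain-rule formulas before proving \eqref{eq:compatible}, which is immaterial.
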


\begin{proof}
By \cref{lem:offsets} applied to the original normals, in a neighborhood of
\(t\one\) in which every antipodal inequality stays strict,
\[
 \partial_if_G(x)=\varphi(x_i)\,
 h_i\Bigl(\Bigl(\frac{x_j-g_{ij}x_i}{\sigma_{ij}}\Bigr)_{j\in J_i}\Bigr).
\]
By \cref{lem:offsets} applied to the conditional normals, the right side is
\(C^1\), so \(f_G\) is \(C^2\) there. At \(x=t\one\) the argument of \(h_i\)
is \(\beta_i\), which gives the first formula.

Applying \cref{lem:offsets} in \(v_i^\perp\) to the \(j\)-th constraint of
\(Q_i\) gives
\[
 \partial_jh_i(\beta_i)=\varphi(\beta_{ij})\,
 \PP\{X_\ell\le t\ (\ell\ne i,j)\mid X_i=X_j=t\}.
\]
Here conditioning \(Y\) on \(\ip{w_{ij}}{Y}=\beta_{ij}\) gives the decomposition
\eqref{eq:pair-decomposition}, so this is the conditional probability in
\eqref{eq:q}. Since \(\beta_{ij}^2=t^2(1-g_{ij})/(1+g_{ij})\),
\[
 \frac{\varphi(t)\varphi(\beta_{ij})}{\sigma_{ij}}
 =\frac1{2\pi\sqrt{1-g_{ij}^2}}
  \exp\!\left[-\frac{t^2}2-\frac{t^2(1-g_{ij})}{2(1+g_{ij})}\right]
 =\varphi_2(t,t;g_{ij}),
\]
which proves \eqref{eq:compatible}. For \(j\in J_i\), only the \(j\)-th
argument of \(h_i\) depends on \(x_j\), with derivative \(1/\sigma_{ij}\), so
differentiating \(\partial_if_G\) in \(x_j\) gives
\(\partial_{ij}f_G=\varphi(t)\,\partial_jh_i(\beta_i)/\sigma_{ij}=q_{ij}\); for
an antipodal \(j\) the omitted inequality contributes zero. Finally,
\(\varphi'(t)=-t\varphi(t)\), and the \(j\)-th argument of \(h_i\) has
\(x_i\)-derivative \(-g_{ij}/\sigma_{ij}\), so
\[
 \partial_{ii}f_G(t\one)
 =-t\varphi(t)H_i+\varphi(t)\sum_{j\in J_i}
  \frac{\sigma_{ij}q_{ij}}{\varphi(t)}\cdot\frac{-g_{ij}}{\sigma_{ij}}
 =-t\varphi(t)H_i-\sum_{j\ne i}g_{ij}q_{ij}.
\]
Summing the first derivatives along \(t\mapsto t\one\) gives the formula for
\(F'_G\).
\end{proof}

We will need one pair weight in every row to be positive, for \(k_i>0\) after
\eqref{eq:row-data} and for the equality case in \cref{sec:mass}. A pair
weight can vanish even for a nonantipodal pair: for
\(X=(Z_1,Z_2,(Z_1+Z_2)/\sqrt2)\), \(X_1=X_2=t\) forces \(X_3=\sqrt2t>t\), so
\(q_{12}=0\).

\begin{lemma}\label{lem:row-positive}
For each \(i\), every index \(j\ne i\) attaining \(\max_{\ell\ne i}g_{i\ell}\)
satisfies \(-1<g_{ij}<1\) and \(q_{ij}>0\).
\end{lemma}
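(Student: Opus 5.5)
Fix \(i\) and an index \(j\ne i\) attaining \(c=\max_{\ell\ne i}g_{i\ell}\). The plan is to check first that \(-1<c<1\), and then that the conditional probability in the definition \eqref{eq:q} of \(q_{ij}\) is positive. Positivity of \(\varphi_2(t,t;c)\) is automatic once \(|c|<1\).

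\emph{The range of \(c\).} The upper bound \(c<1\) is the standing assumption \eqref{eq:distinct}. For the lower bound, suppose \(c=-1\). Then \(g_{i\ell}=-1\) for every \(\ell\ne i\), so \(v_\ell=-v_i\) for all \(\ell\ne i\). With \(n\ge3\) this gives two indices \(\ell\ne\ell'\) with \(v_\ell=v_{\ell'}\), hence \(g_{\ell\ell'}=1\), contradicting \eqref{eq:distinct}. So \(-1<c<1\), and \(j\in J_i\).

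\emph{Positivity of the conditional probability.} Given \(X_i=X_j=t\), each remaining \(X_\ell\) has conditional mean \(t(g_{i\ell}+g_{j\ell})/(1+c)\) by \eqref{eq:pair-mean}. I will show the event \(\{X_\ell\le t\ (\ell\ne i,j)\}\) has positive conditional probability by exhibiting a point of the conditional affine space \(tv_i+\beta_{ij}w_{ij}+\operatorname{span}\{v_i,v_j\}^\perp\) at which every constraint holds \emph{strictly}. A nondegenerate Gaussian on that space then gives the open set of strict solutions positive mass. If the orthogonal complement is trivial, only the mean point itself is available, so in that case I need the strict inequality exactly there.

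The natural candidate is the conditional mean \(z_0=tv_i+\beta_{ij}w_{ij}\). Its inner product with \(v_\ell\) is \(t(g_{i\ell}+g_{j\ell})/(1+c)\), so the requirement is
\[
 g_{i\ell}+g_{j\ell}<1+c \qquad(\ell\ne i,j).
\]
Since \(g_{i\ell}\le c\), it suffices to show \(g_{j\ell}<1\), which holds by \eqref{eq:distinct}. If instead \(g_{j\ell}=1\), then \(\ell=j\), which is excluded. So the inequality is strict, and the mean point lies strictly inside all constraints.

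The main care point is the interaction with antipodal pairs. If \(g_{i\ell}=-1\) or \(g_{j\ell}=-1\), the constraint is not degenerate in a harmful direction: \(z_0\) still gives a strict inequality, since the computation above did not use \(|g_{i\ell}|<1\). Once \(z_0\) strictly satisfies finitely many linear inequalities, continuity gives a neighbourhood of \(z_0\) inside the affine space where they all still hold. The conditional law is standard Gaussian on this space, so that neighbourhood has positive mass, and therefore \(q_{ij}>0\).
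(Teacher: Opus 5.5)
Your proposal is correct and follows essentially the same route as the paper. The paper rules out \(c=-1\) through two coinciding normals, as you do. It then uses the same bound \(g_{i\ell}+g_{j\ell}\le c+g_{j\ell}<1+c\) to put the conditional mean strictly inside every constraint, and finishes with the explicit neighbourhood \(|Y'|<\min_{\ell\ne i,j}\delta_\ell\), which works because \(|\ip{v_\ell}{Y'}|\le|Y'|\); it also treats the zero-dimensional case deterministically, as you do.
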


\begin{proof}
Let \(c=g_{ij}\) be such a maximum. If \(c=-1\), every other normal equals
\(-v_i\), and since \(n\ge3\) two of them coincide, contradicting
\eqref{eq:distinct}. Thus \(-1<c<1\). For \(\ell\ne i,j\),
\[
 g_{i\ell}+g_{j\ell}\le c+g_{j\ell}<1+c,
\]
so by \eqref{eq:pair-mean} the conditional mean of \(X_\ell\) is strictly
less than \(t\). By \eqref{eq:pair-decomposition}, conditionally on
\(X_i=X_j=t\) we have \(X_\ell=\E[X_\ell\mid X_i=X_j=t]+\ip{v_\ell}{Y'}\).
Put \(\delta_\ell=t-\E[X_\ell\mid X_i=X_j=t]>0\). Since
\(|\ip{v_\ell}{Y'}|\le|Y'|\), the event
\[
 |Y'|<\min_{\ell\ne i,j}\delta_\ell
\]
has positive probability and satisfies every inequality; when \(Y'\) has
dimension zero, the inequalities hold deterministically. The density
factor in \eqref{eq:q} is positive, so \(q_{ij}>0\).
\end{proof}

With the derivatives in hand, we turn to the covariance.

\section{First-order conditions at a minimizing covariance}\label{sec:variation}

We now use minimality. Adding independent Gaussian noise with covariance
\(sL\), \(L\succeq0\), to \(X^G\) and renormalizing moves \(G\) along a path
\(G_s\) in \(\cE_n\), even when \(G\) is singular, and \cref{prop:noise} gives
\(\frac{d}{ds}F_{G_s}(t)|_{0+}=\frac12\tr(SL)\), where \(S\) is the matrix
\eqref{eq:S} of pair weights. At a minimizer this yields \(S\succeq0\) and
\(SG=0\) (\cref{prop:minimizer}), and then the inequality
\eqref{eq:row-bound} for each row of \(G\) (\cref{lem:row-bound}).

For a fixed symmetric matrix \(L\succeq0\) define
\begin{equation}\label{eq:noise-path}
 R_s=\diag\bigl((1+sL_{ii})^{-1/2}\bigr),\qquad
 G_s=R_s(G+sL)R_s,\qquad s\ge0.
\end{equation}
If \(Y\sim\cN(0,L)\) is independent of \(X^G\), then
\(R_s(X^G+\sqrt sY)\) has covariance \(G_s\), so \(G_s\in\cE_n\). Define the
symmetric matrix
\begin{equation}\label{eq:S}
 S_{ij}=q_{ij}\ (i\ne j),\qquad S_{ii}=-\sum_{j\ne i}g_{ij}q_{ij}.
\end{equation}

\begin{proposition}\label{prop:noise}
Let \(n\ge3\), \(t>0\), and suppose \eqref{eq:distinct} holds. For every fixed
\(L\succeq0\), the right derivative along \eqref{eq:noise-path} is
\begin{equation}\label{eq:noise-derivative}
 \left.\frac{d}{ds}F_{G_s}(t)\right|_{0+}
 =\frac12\tr(SL)
 =\sum_{i<j}q_{ij}\left(L_{ij}-\frac{g_{ij}}2(L_{ii}+L_{jj})\right).
\end{equation}
\end{proposition}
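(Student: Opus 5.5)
The plan is to express \(F_{G_s}(t)\) as \(f_{G+sL}\) evaluated at rescaled thresholds and then apply the chain rule. The first step is the identity \(F_{G_s}(t)=\PP\{X^G+\sqrt sY\le t R_s^{-1}\one\}\), where \(R_s^{-1}\one\) has entries \(\sqrt{1+sL_{ii}}\). Write \(\Phi_{\Sigma}(x)=\PP\{\cN(0,\Sigma)\le x\}\) for a general covariance \(\Sigma\), so that \(f_G=\Phi_G\). Then
\[
 F_{G_s}(t)=\Phi_{G+sL}(x(s)),\qquad x_i(s)=t\sqrt{1+sL_{ii}}.
\]
The threshold contribution is \(\sum_i\partial_if_G(t\one)\,\tfrac{t}{2}L_{ii}=\tfrac t2\varphi(t)\sum_iH_iL_{ii}\), which follows from \cref{prop:boundary-calculus}, whose \(C^2\) regularity also justifies this chain rule.

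The covariance contribution comes from the Gaussian heat-equation (Plackett) identity \(\partial_{\Sigma_{ij}}\Phi_\Sigma=\partial_{ij}\Phi_\Sigma\) for \(i\ne j\) and \(\partial_{\Sigma_{ii}}\Phi_\Sigma=\tfrac12\partial_{ii}\Phi_\Sigma\). Because \(G\) may be singular, I would avoid differentiating a density. Instead I would write \(\Phi_{G+sL}(x)=\E f_G(x-\sqrt s\,Y)\) and expand \(f_G\) to second order at \(x\) by Taylor's theorem. This is legitimate since \(f_G\) is \(C^2\) near \(t\one\) by \cref{prop:boundary-calculus}. Away from that neighborhood, \(f_G\) is bounded, and Gaussian tails of \(\sqrt sY\) make the contribution \(o(s)\).

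The first-order Taylor term has mean zero, and the second-order term gives \(\tfrac s2\sum_{i,j}L_{ij}\partial_{ij}f_G(x)\). The \(C^2\) modulus of continuity controls the remainder, uniformly for \(x=x(s)\) near \(t\one\). Combining this with the threshold shift (a joint expansion in \(x\) and \(s\)) gives
\[
 \frac{d}{ds}F_{G_s}(t)\Big|_{0+}=\frac t2\varphi(t)\sum_iH_iL_{ii}+\frac12\sum_{i,j}L_{ij}\,\partial_{ij}f_G(t\one).
\]

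Next I would substitute the formulas \eqref{eq:threshold-derivatives}. In the diagonal terms, \(\tfrac12L_{ii}\partial_{ii}f_G=\tfrac12L_{ii}\bigl(-t\varphi(t)H_i-\sum_{j\ne i}g_{ij}q_{ij}\bigr)\), and the \(-t\varphi(t)H_i\) part cancels the threshold contribution exactly. What remains is \(\tfrac12\bigl(\sum_iL_{ii}S_{ii}+\sum_{i\ne j}L_{ij}q_{ij}\bigr)=\tfrac12\tr(SL)\). Grouping by unordered pairs then gives \(\sum_{i<j}q_{ij}\bigl(L_{ij}-\tfrac{g_{ij}}2(L_{ii}+L_{jj})\bigr)\). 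Antipodal pairs contribute nothing, since for them \(q_{ij}=0\) and \(\partial_{ij}f_G=0\) by \cref{prop:boundary-calculus}.

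The main obstacle is justifying the second-order expansion when \(G\) is singular, since the identity \(\Phi_{G+sL}(x)=\E f_G(x-\sqrt sY)\) is only useful with local \(C^2\) control. I would handle this by splitting on \(\{|\sqrt sY|<\delta\}\), where \(\delta\) is chosen so the \(C^2\) neighborhood from \cref{prop:boundary-calculus} contains the ball around \(t\one\). On that event the Taylor remainder is \(o(s)\), by uniform continuity of the Hessian and the bound \(\E|Y|^2<\infty\). The complementary event has probability \(O(e^{-c/s})\), and the truncated first-moment error there is also exponentially small.
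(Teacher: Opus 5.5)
Your proposal is correct and follows essentially the paper's argument. You write \(F_{G_s}(t)\) as \(\E f_G(\text{shifted threshold}-\sqrt s\,Y)\), Taylor-expand \(f_G\) using the local \(C^2\) regularity from \cref{prop:boundary-calculus}, bound the far region by Gaussian tails, and cancel the \(t\varphi(t)H_iL_{ii}\) terms against the diagonal second derivatives. The only difference is cosmetic: you expand at \(x(s)\) and then expand \(x(s)\) about \(t\one\), whereas the paper expands once at \(t\one\) in the combined perturbation \(u_s=\zeta(s)-\sqrt s\,Y\).
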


\begin{proof}
Conditioning on \(Y\) gives
\[
 F_{G_s}(t)=\E f_G\bigl(t\one+\zeta(s)-\sqrt sY\bigr),\qquad
 \zeta_i(s)=t\bigl(\sqrt{1+sL_{ii}}-1\bigr)=\frac{st}2L_{ii}+O(s^2).
\]
Put \(u_s=\zeta(s)-\sqrt sY\). Its moments satisfy
\[
 \E u_{s,i}=\frac{st}2L_{ii}+O(s^2),\qquad \E[u_su_s^\top]=sL+O(s^2).
\]
By \cref{prop:boundary-calculus}, Taylor's theorem at \(t\one\) gives
\[
 f_G(t\one+u)=f_G(t\one)+Df_G(t\one)u+\frac12u^\top D^2f_G(t\one)u
 +\mathcal R(u),\qquad \mathcal R(u)=o(|u|^2).
\]
We need \(\E\mathcal R(u_s)=o(s)\). Given \(\theta>0\), choose \(\delta>0\) with
\(|\mathcal R(u)|\le\theta|u|^2\) for \(|u|\le\delta\); on this event the
contribution is at most \(\theta(s\tr L+O(s^2))\). Outside it, define
\(\mathcal R\) by the same subtraction; boundedness of \(f_G\) gives
\(|\mathcal R(u)|\le c_1(1+|u|^2)\). Write \(Y=L^{1/2}Z\) with
\(Z\sim\gamma_n\). If \(L\ne0\), then for all small \(s\),
\[
 |u_s|>\delta\quad\Longrightarrow\quad
 |Z|>\frac\delta{2\norm{L^{1/2}}_{\mathrm{op}}\sqrt s},
\]
and the Gaussian exponential moment with the Cauchy--Schwarz inequality gives
constants \(c_2,c_3>0\) with
\[
 \E\bigl[(1+|u_s|^2)\ind_{\{|u_s|>\delta\}}\bigr]\le c_2e^{-c_3/s}=o(s).
\]
If \(L=0\), the outside event is empty for small \(s\). Dividing by \(s\),
taking the upper limit, and then letting \(\theta\downarrow0\) proves
\(\E\mathcal R(u_s)=o(s)\). Consequently
\[
 \left.\frac{d}{ds}F_{G_s}(t)\right|_{0+}
 =\frac12\sum_{i,j}L_{ij}\,\partial_{ij}f_G(t\one)
  +\frac t2\sum_iL_{ii}\,\partial_if_G(t\one).
\]
Inserting \eqref{eq:threshold-derivatives}, the terms \(t\varphi(t)H_iL_{ii}\)
cancel, and what remains is \(\frac12\tr(SL)\). Expanding the trace gives the
last expression in \eqref{eq:noise-derivative}.
\end{proof}

Covariance derivatives of Gaussian probabilities go back to Plackett
\cite[pp.~352--353, (3)--(6)]{Plackett1954}, with a
distributional formulation in \cite[Theorem~1]{Voigtlaender2020}; those
formulas assume a positive definite covariance, which is why we proved
\cref{prop:noise} at a possibly singular \(G\).

We use the following row data of \(S\):
\begin{equation}\label{eq:row-data}
 \begin{split}
 k_i&=\sum_{j\ne i}(1-g_{ij})q_{ij}=(S\one)_i,\qquad
 C_i=\sum_{j\ne i}\sqrt{1-g_{ij}^2}\,q_{ij},\\
 \kappa&=\sum_ik_i=\one^\top S\one,\qquad a_i=\frac{k_i}\kappa.
 \end{split}
\end{equation}
By \eqref{eq:two-derivatives} below,
\(\beta_i\cdot\nabla h_i(\beta_i)=tk_i/\varphi(t)\) and
\(\one\cdot\nabla h_i(\beta_i)=C_i/\varphi(t)\). By \cref{lem:row-positive},
\(k_i>0\) and \(C_i>0\), so \(\kappa>0\), \(a_i>0\), and \(\sum_ia_i=1\).

The row estimate in \cref{lem:row-bound} needs a lower bound on \(S_{ii}\)
in terms of \(k_i\) and \(\kappa\). Let \(\varepsilon_i\) be the \(i\)-th
coordinate vector. When \(S\succeq0\), testing it on
\(\varepsilon_i-c\one\) with \(c\in\R\) gives
\[
 0\le(\varepsilon_i-c\one)^\top S(\varepsilon_i-c\one)
 =S_{ii}-2ck_i+c^2\kappa
 =S_{ii}-\frac{k_i^2}\kappa+\kappa\Bigl(c-\frac{k_i}\kappa\Bigr)^2,
\]
which is strongest at \(c=a_i\). We therefore put
\begin{equation}\label{eq:tests}
 z_i=\varepsilon_i-a_i\one,\qquad
 \eta_i=z_i^\top Sz_i=S_{ii}-\frac{k_i^2}\kappa.
\end{equation}

\begin{proposition}\label{prop:minimizer}
Let \(n\ge3\) and \(t>0\). Every global minimizer \(G\) of \(\Gamma\mapsto F_\Gamma(t)\)
on \(\cE_n\) satisfies \eqref{eq:distinct}, and with \(k=S\one\),
\[
 S\succeq0,\qquad SG=0,\qquad Gk=0,\qquad k_i>0,
 \qquad 2\le\rank G\le n-1.
\]
In particular \(\eta_i\ge0\) for every \(i\).
\end{proposition}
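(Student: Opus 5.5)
First I show that a global minimizer \(G\) satisfies \eqref{eq:distinct}. Suppose \(g_{12}=1\), so \(X_1=X_2\) almost surely. I would replace the duplicated coordinate by one that is more constraining and strictly lower \(F(t)\). For instance, take a unit vector \(u\) orthogonal to the span of the realization, in an enlarged space, and deform \(v_2\) to \((v_2+\epsilon u)/\norm{\cdot}\). The resulting Gram matrix lies in \(\cE_n\). The event \(\{X_1\le t,\ X_2'\le t\}\) is strictly smaller than \(\{X_1\le t\}\) by a set of positive measure intersected with the other constraints. That intersection has positive mass because all constraints hold on a small ball, as in \cref{lem:compact}. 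So \(F\) strictly decreases. A cleaner alternative is to observe that with \(X_2=X_1\) the value equals \(F_{G'}(t)\) for the \((n-1)\)-dimensional matrix \(G'\). Adding any independent new coordinate \(X_2'\) then gives \(F\le F_{G'}(t)\), with strict decrease because \(\PP\{X_2'>t\mid\ldots\}>0\). I would use the second version: pick \(X_2'\) independent of the rest, which is allowed since \(\cE_n\) has no rank restriction. Strictness follows from positivity of the joint event \(\{X_j\le t\ (j\ne2)\}\), by \cref{lem:compact}, times \(1-\Phi(t)>0\).

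Given \eqref{eq:distinct}, \cref{prop:noise} applies. Minimality gives \(\frac12\tr(SL)\ge0\) for every \(L\succeq0\), since \(G_s\in\cE_n\) for \(s\ge0\). Taking \(L=xx^\top\) yields \(S\succeq0\), and hence \(\eta_i\ge0\) from \eqref{eq:tests}.

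Next take \(L=G\). Then \(G_s=G(1+s)/(1+s)=G\) is constant, so the derivative is \(0\) and \(\tr(SG)=0\). Writing \(\tr(SG)=\norm{S^{1/2}G^{1/2}}_F^2\) gives \(S^{1/2}G^{1/2}=0\), hence \(SG=0\) and \(GS=0\). Applying this to \(\one\) gives \(Gk=GS\one=0\).

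For positivity of \(k\), note \(k_i=\sum_{j\ne i}(1-g_{ij})q_{ij}\). Each term is nonnegative, and \cref{lem:row-positive} supplies a \(j\) with \(g_{ij}<1\) and \(q_{ij}>0\), so \(k_i>0\). Since \(Gk=0\) with \(k\ne0\), \(G\) is singular and \(\rank G\le n-1\). The bound \(\rank G\ge2\) was noted at the start of \cref{sec:boundary}: with \(n\ge3\) and distinct coordinates, rank one is impossible.

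The main obstacle is the strictness in the first step, which needs care when \(G\) has several duplicated pairs or antipodes. The independent-replacement argument handles this uniformly: it only requires the positive lower bound from \cref{lem:compact} applied to the remaining \(n-1\) coordinates.
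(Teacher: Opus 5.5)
Your proposal is correct and follows the paper's proof essentially step for step. Replacing a duplicated coordinate by an independent standard Gaussian gives \(F_{G'}(t)=\Phi(t)F_G(t)<F_G(t)\); \cref{prop:noise} with \(L=zz^\top\) and \(L=G\) gives \(S\succeq0\) and \(\tr(SG)=0\), hence \(SG=GS=0\); and \cref{lem:row-positive} gives \(k_i>0\). The perturbation sketch you mention first is unnecessary, and not fully justified as written, so you can simply drop it.
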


\begin{proof}
A minimizer exists by \cref{lem:compact}. Suppose \(g_{ij}=1\), so that
\(X_i=X_j\) almost surely. Replacing \(X_i\) by an independent standard
Gaussian gives \(G'\in\cE_n\) with \(F_{G'}(t)=\Phi(t)F_G(t)\), because the
old \(i\)-th constraint was redundant. Since \(F_G(t)>0\)
(\cref{lem:compact}), \(F_{G'}(t)<F_G(t)\), a contradiction. Thus
\eqref{eq:distinct} holds.

For every \(z\in\R^n\), \(L=zz^\top\) is admissible in \cref{prop:noise}, and
minimality gives
\[
 0\le\left.\frac{d}{ds}F_{G_s}(t)\right|_{0+}=\frac12z^\top Sz.
\]
Thus \(S\succeq0\). Also, directly from \eqref{eq:S} and
\(g_{ii}=1\),
\[
 \tr(SG)=\sum_iS_{ii}+\sum_{i\ne j}g_{ij}q_{ij}=0.
\]
This identity holds for every \(G\) satisfying \eqref{eq:distinct}: it is
\cref{prop:noise} with \(L=G\), for which \(G_s=G\) for every \(s\). Since
\(S\) and \(G\) are positive semidefinite,
\(\tr(SG)=\norm{G^{1/2}S^{1/2}}_F^2\), so \(G^{1/2}S^{1/2}=0\) and
\(SG=GS=0\). In particular \(Gk=GS\one=0\), where \(k_i>0\) as noted after
\eqref{eq:row-data}. A nonzero kernel vector gives \(\rank G\le n-1\), and
\(\rank G\ge2\) was observed at the start of \cref{sec:boundary}. Finally,
\(\eta_i\ge0\) is the case \(z=z_i\) of \(z^\top Sz\ge0\), with \(z_i\) from
\eqref{eq:tests}.
\end{proof}

To see what the row inequalities should say, we compute the row data at the simplex. At \(G=\Delta_n\), symmetry and \cref{lem:row-positive} give \(q_{ij}=q>0\) for
all \(i\ne j\), and then
\[
  S=qJ,\quad k_i=nq,\quad \kappa=n^2q,\quad a_i=\frac1n,\quad \eta_i=0,\quad C_i=\sqrt{n(n-2)}\,q,\quad \beta_{ij}=\rho_n(t).
\]
Thus \(C_i/k_i=\sqrt{(n-2)/n}\) at the simplex, and \cref{lem:row-bound}
compares \(A_i=C_i/k_i\) with this value \(A_*\).

\begin{lemma}\label{lem:row-bound}
Let \(n\ge3\), \(t>0\), and \(G\in\cE_n\). Suppose \eqref{eq:distinct} holds
and \(\eta_i\ge0\) for every \(i\). Put
\[
 A_*=\sqrt{\frac{n-2}n},\qquad A_i=\frac{C_i}{k_i}.
\]
Then \(0<a_i<1/2\) and
\begin{equation}\label{eq:row-bound}
 C_i^2\le k_i^2(1-2a_i),\qquad
 (n-2)\Bigl(1-\frac{A_i}{A_*}\Bigr)\ge na_i-1.
\end{equation}
\end{lemma}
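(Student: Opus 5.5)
The plan is to extract everything from the single scalar inequality \(\eta_i\ge0\), combined with one Cauchy--Schwarz step, and then to verify the second inequality in \eqref{eq:row-bound} by elementary algebra. Positivity \(a_i>0\) is already known from the remarks after \eqref{eq:row-data}, which use \cref{lem:row-positive}.

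\emph{Step 1: rewrite \(\eta_i\ge0\).} Put \(P_i=\sum_{j\ne i}q_{ij}\). From \eqref{eq:S} and \eqref{eq:row-data},
\[
 S_{ii}=-\sum_{j\ne i}g_{ij}q_{ij}=k_i-P_i .
\]
Since \(k_i^2/\kappa=a_ik_i\), the hypothesis \(\eta_i\ge0\) reads \(k_i-P_i\ge a_ik_i\), that is,
\[
 P_i\le(1-a_i)k_i .
\]

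\emph{Step 2: Cauchy--Schwarz.} Write \(\sqrt{1-g_{ij}^2}\,q_{ij}=\sqrt{(1-g_{ij})q_{ij}}\sqrt{(1+g_{ij})q_{ij}}\), where all weights are nonnegative. This gives
\[
 C_i^2\le\Bigl(\sum_{j\ne i}(1-g_{ij})q_{ij}\Bigr)\Bigl(\sum_{j\ne i}(1+g_{ij})q_{ij}\Bigr)=k_i(2P_i-k_i).
\]
Inserting Step~1 yields \(C_i^2\le k_i^2(1-2a_i)\), which is the first inequality in \eqref{eq:row-bound}. Since \(C_i>0\) (again by \cref{lem:row-positive}), this forces \(1-2a_i>0\), so \(a_i<1/2\).

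\emph{Step 3: the algebraic consequence.} Dividing by \(k_i^2\) gives \(A_i\le\sqrt{1-2a_i}\). The target \((n-2)(1-A_i/A_*)\ge na_i-1\) is equivalent to
\[
 A_i\le\frac{n-1-na_i}{\sqrt{n(n-2)}},
\]
where we use \(A_*/(n-2)=1/\sqrt{n(n-2)}\). It therefore suffices to show \(\sqrt{1-2a}\le(n-1-na)/\sqrt{n(n-2)}\) for \(0<a<1/2\). The right side is positive there, because \(n-1-na>(n-2)/2\ge0\), so we may square both sides. The difference
\[
 (n-1-na)^2-n(n-2)(1-2a)
\]
expands to \(1-2na+n^2a^2=(na-1)^2\ge0\).

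Geometrically, this says the line \(a\mapsto(n-1-na)/\sqrt{n(n-2)}\) is tangent to the concave function \(\sqrt{1-2a}\) at \(a=1/n\). This tangency is why the row bound is sharp exactly at the simplex.

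I expect no real obstacle here. The only point needing care is Step~1: recognizing that \(S_{ii}\) equals \(k_i\) minus the total weight \(P_i\). Once that is seen, Cauchy--Schwarz pairs \(k_i\) with \(2P_i-k_i=\sum_{j\ne i}(1+g_{ij})q_{ij}\), and the rest is a perfect square.
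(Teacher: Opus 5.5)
Your proof is correct and follows the paper's argument. You use the same Cauchy--Schwarz pairing of $(1-g_{ij})q_{ij}$ with $(1+g_{ij})q_{ij}$ together with $\eta_i\ge0$; your $2P_i-k_i$ is exactly the paper's $k_i-2S_{ii}$. Your perfect-square computation $(na-1)^2\ge0$ is a direct check of the paper's tangent-line bound for the concave function $\sqrt{1-2a}$ at $a=1/n$.
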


\begin{proof}
The Cauchy--Schwarz inequality applied to the lists
\(\sqrt{(1-g_{ij})q_{ij}}\) and \(\sqrt{(1+g_{ij})q_{ij}}\) gives
\begin{equation}\label{eq:row-cauchy}
 \begin{split}
 C_i^2&\le k_i\sum_{j\ne i}(1+g_{ij})q_{ij}=k_i(k_i-2S_{ii})\\
 &\le k_i\Bigl(k_i-\frac{2k_i^2}\kappa\Bigr)=k_i^2(1-2a_i),
 \end{split}
\end{equation}
where the equality uses \(\sum_{j\ne i}(1+g_{ij})q_{ij}
=\sum_{j\ne i}(1-g_{ij})q_{ij}+2\sum_{j\ne i}g_{ij}q_{ij}=k_i-2S_{ii}\), and
the second inequality is \(\eta_i\ge0\). Because \(C_i>0\), this forces
\(a_i<1/2\). The tangent line of the concave function
\(a\mapsto\sqrt{1-2a}\) at \(a=1/n\) gives
\[
 A_i\le\sqrt{1-2a_i}\le A_*-\frac{a_i-1/n}{A_*}.
\]
Dividing by \(A_*\), multiplying by \(n-2\), and using \((n-2)/A_*^2=n\)
gives the second inequality in \eqref{eq:row-bound}.
\end{proof}

Since \(\sum_ia_i=1\), the right sides \(na_i-1\) of \eqref{eq:row-bound}
sum to zero, so unless every \(a_i=1/n\) some row has a negative right side,
and there \(A_i\) may exceed \(A_*\). \Cref{sec:mass} shows that the row inequalities together still give
\(\sum_iH_i\ge np_0\) (\cref{prop:finite-boundary}).

\begin{remark}\label{rem:bounded}
The kernel vector has a geometric meaning. In the intrinsic realization,
\(0=k^\top Gk=|\sum_ik_iv_i|^2\), so \(\sum_ik_iv_i=0\) with every \(k_i>0\).
If \(\ip{v_i}{y}\le0\) for every \(i\), then \(\sum_ik_i\ip{v_i}{y}=0\) forces
\(\ip{v_i}{y}=0\) for every \(i\), and \(y=0\) because the \(v_i\) span
\(\R^r\). Thus the polyhedron \(\{y\in\R^r:\ip{v_i}{y}\le t\}\) of a
minimizing covariance is bounded.
\end{remark}

\section{The aggregate bound and its equality case}\label{sec:mass}

We now prove \cref{prop:interface}. Of the conclusions of
\cref{prop:minimizer} we need only \eqref{eq:distinct}, \(\rank G\le n-1\)
and \(\eta_i\ge0\), so \cref{prop:finite-boundary} assumes only these.

\begin{proposition}\label{prop:finite-boundary}
Let \(n\ge3\), \(t>0\), and let \(G\in\cE_n\) satisfy \eqref{eq:distinct}
and \(\rank G\le n-1\). Suppose \(\eta_i\ge0\) for every \(i\), and \(p_0>0\)
satisfies
\[
 h_i(\rho_n(t)\one)\ge p_0\qquad(1\le i\le n).
\]
Then
\[
 \sum_iH_i\ge np_0\qquad\text{and}\qquad F'_G(t)\ge n\varphi(t)p_0.
\]
Equality in either inequality forces \(G=\Delta_n\).
\end{proposition}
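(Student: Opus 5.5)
The plan is to obtain, for each row \(i\), two scalar inequalities relating \(H_i\), \(p_0\) and the row data \(k_i,a_i,A_i\) of \eqref{eq:row-data}, and then sum them with a scalar lemma. The second claim follows from the first by \(F'_G(t)=\varphi(t)\sum_iH_i\) in \eqref{eq:threshold-derivatives}.

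\emph{Step 1 (log-concavity).} The mass \(h_i\) is the Gaussian measure of a polyhedron whose offsets enter linearly, so \(\log h_i\) is concave in \(b\) by Prékopa--Leindler, and \(1/h_i=e^{-\log h_i}\) is convex. The tangent inequality for \(1/h_i\) at \(\beta_i\), evaluated at \(\rho_n(t)\one\) and multiplied by \(H_i^2\), gives
\[
 \frac{H_i^2}{p_0}\ \ge\ \frac{H_i^2}{h_i(\rho_n(t)\one)}\ \ge\ H_i+(\beta_i-\rho_n(t)\one)\cdot\nabla h_i(\beta_i).
\]
By \eqref{eq:compatible}, \(\beta_i\cdot\nabla h_i=tk_i/\varphi(t)\) and \(\one\cdot\nabla h_i=C_i/\varphi(t)\). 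Since \(\rho_n(t)=t/A_*\), the last term equals \(tk_i(1-A_i/A_*)/\varphi(t)\). By \cref{lem:row-bound} this is at least \(tk_i(na_i-1)/((n-2)\varphi(t))\). Put
\[
 u=\frac{t\kappa}{(n-2)\varphi(t)}>0.
\]
Then Step 1 yields
\[
 \frac{H_i^2}{p_0}-H_i\ \ge\ u\,a_i(na_i-1).
\]

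\emph{Step 2 (dilation).} Let \(d=\rank G-1\le n-2\) be the dimension of \(v_i^\perp\) in the intrinsic realization. Apply the divergence theorem to the field \(y\mapsto y\,\varphi_d(y)\) on \(Q_i(\beta_i)\); its divergence is \((d-|y|^2)\varphi_d(y)\). The boundary flux through the facet \(\ip{w_{ij}}{y}=\beta_{ij}\) is \(\beta_{ij}\,\partial_jh_i(\beta_i)\). This gives the identity
\[
 d\,H_i=\beta_i\cdot\nabla h_i(\beta_i)+\int_{Q_i(\beta_i)}|y|^2\,d\gamma_d ,
\]
which splits \(H_i\) into two positive parts. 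Equivalently, differentiate \(\lambda\mapsto h_i(\lambda\beta_i)\) at \(\lambda=1\) through the change of variables \(y=\lambda y'\); this avoids boundary regularity issues for unbounded regions. Hence \((n-2)H_i\ge dH_i\ge tk_i/\varphi(t)\), that is, \(H_i\ge u\,a_i\). Equality forces \(d=n-2\) and the second moment to vanish, which is impossible, so in fact \(H_i>u\,a_i\) strictly. I expect to keep that slack for the equality case.

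\emph{Step 3 (scalar lemma), the main obstacle.} Given \(a_i>0\) with \(\sum_ia_i=1\), \(u>0\), \(H_i>0\), and for each \(i\) both
\[
 \frac{H_i^2}{p_0}-H_i\ge u\,a_i(na_i-1)\qquad\text{and}\qquad H_i\ge u\,a_i,
\]
we must show \(\sum_iH_i\ge np_0\). Rows with \(na_i\ge1\) carry their strength in the quadratic bound, and rows with \(na_i<1\) need the linear bound.

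I would first try to reduce to a one-parameter comparison. Each constraint set is an interval \([m_i,\infty)\) with
\[
 m_i=\max\bigl\{u\,a_i,\ \text{positive root of }H^2/p_0-H=u\,a_i(na_i-1)\bigr\}.
\]
So it suffices to show \(\sum_i m_i\ge np_0\). The map \(a\mapsto m(a)\) is a maximum of two explicit functions; I would show it is convex in \(a\), at least after the case split between the branch where \(ua\) dominates and the branch where the root dominates. Jensen then gives \(\sum_i m(a_i)\ge n\,m(1/n)=np_0\), since at \(a=1/n\) the root equals \(p_0\). If convexity of the maximum fails, the fallback is a Lagrange or tangent-line argument: bound \(m(a)\ge p_0+\mu(a-1/n)\) for a common slope \(\mu\), separately on each branch, and sum using \(\sum_i(a_i-1/n)=0\). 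The delicate point is that the slope must be chosen uniformly in \(u\) and \(p_0\).

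\emph{Equality case.} Equality in the scalar lemma should force every \(a_i=1/n\) and every \(H_i=p_0\). The strict inequality \(H_i>u\,a_i\) from Step 2 leaves only the quadratic branch active, and strict convexity pins down the \(a_i\). Then equality must hold in \cref{lem:row-bound}. That requires equality in the Cauchy--Schwarz step \eqref{eq:row-cauchy}, which gives \(g_{ij}\) constant in \(j\) over the pairs with \(q_{ij}>0\). It also requires equality in the tangent line of \(\sqrt{1-2a}\). With equality in \eqref{eq:row-cauchy} this gives \(C_i/k_i=A_*\), i.e. \(\sqrt{(1+g)/(1-g)}=\sqrt{(n-2)/n}\), i.e. \(g=-1/(n-1)\). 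One must still propagate this to all pairs, including those with \(q_{ij}=0\). For that I would combine \cref{lem:row-positive}, which makes the row maximum attain \(q_{ij}>0\), with the fact that all row maxima equal \(-1/(n-1)\): this gives \(g_{ij}\le-1/(n-1)\) for all \(i\ne j\). Then \eqref{eq:average-correlation} forces \(G=\Delta_n\).
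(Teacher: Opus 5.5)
Your Steps 1 and 2 are the paper's argument: the reciprocal tangent inequality combined with \cref{lem:row-bound}, and the dilation identity. Keeping the intrinsic dimension \(d=\rank G-1\) and using \((n-2)H_i\ge dH_i\) is equivalent to the paper's padding. In fact your two row inequalities are exactly \eqref{eq:components} together with \(e_i=H_i-ua_i>0\), with your \(u\) equal to the paper's \(U\).

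The gap is Step 3. It carries the whole aggregation, you leave it as a plan, and the one concrete claim in that plan is false. For \(a_i<1/n\) the right side \(c_i=ua_i(na_i-1)\) is negative. Put \(q_i(H)=H^2/p_0-H-c_i\). Then \(q_i\) has two positive roots \(r_-<r_+\) (or none), and the set \(\{H>0:q_i(H)\ge0\}\) is \((0,r_-]\cup[r_+,\infty)\), not an up-interval. Since \(q_i(ua_i)=ua_i^2(u/p_0-n)\), whenever \(u>np_0\) and \(a_i\) is small the point \(ua_i\) lies below \(r_-\). Then \(H_i\) may sit far below your \(m_i=\max\{ua_i,r_+\}\), and \(\sum_im_i\ge np_0\) would not bound \(\sum_iH_i\). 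In that regime \(\sqrt{1+4(u/p_0)a(na-1)}\) is also concave on each interval where it is defined, so the convexity you hope to prove is not available. Your heuristic is off as well: rows with \(na_i<1\) are not carried by the linear bound, which only selects the upper branch.

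The missing idea is that the dichotomy is global, \(u\) versus \(np_0\), rather than per row. With it your fallback works with slope \(\mu=u\); no uniformity is needed, since \(u\) and \(p_0\) are common to all rows. Put \(L_i=p_0+u(a_i-1/n)\), so \(\sum_iL_i=np_0\).

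If \(u\ge np_0\), then \(L_i\le ua_i<H_i\). If \(u<np_0\), then
\[
 q_i(ua_i)=ua_i^2\Bigl(\frac u{p_0}-n\Bigr)<0,\qquad
 q_i(L_i)=u\Bigl(a_i-\frac1n\Bigr)^2\Bigl(\frac u{p_0}-n\Bigr)\le0,
\]
so the convex function \(q_i\) is negative on \([ua_i,L_i)\). Since \(H_i\ge ua_i\) and \(q_i(H_i)\ge0\), this forces \(H_i\ge L_i\). Equivalently, the larger root \(r_+(a)\) is strictly convex in \(a\) exactly when \(u<np_0\), with slope \(u\) at \(a=1/n\). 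Summing gives \(\sum_iH_i\ge np_0\).

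For equality, the case \(u\ge np_0\) is excluded by the strict bound \(H_i>ua_i\). For \(u<np_0\), equality gives \(H_i=L_i\), and then \(q_i(L_i)\ge0\) forces \(a_i=1/n\) and \(H_i=p_0\). From there your equality argument is the paper's.

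The paper avoids the case split: \cref{lem:scalar} applies Cauchy--Schwarz, \(x_i^2/P\le u_i^2/U+e_i^2/E\), at an index with \(e_i/E\le1/n\). The tangent-line route instead gives the per-row bound \(H_i-p_0\ge u_i-U/n\), slightly more than the aggregate.
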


The proof compares each \(H_i=h_i(\beta_i)\) with \(h_i(\rho_n(t)\one)\ge p_0\),
using two facts about the conditional masses. Log-concavity in the offsets
gives the tangent inequality \eqref{eq:reciprocal}, and the dilation identity \eqref{eq:dilation} splits
\(H_i=u_i+e_i\) with \(u_i,e_i>0\) \eqref{eq:decomposition}.
\Cref{lem:scalar} sums the resulting inequalities \eqref{eq:components}.

\subsection{Padding and two derivatives}

Put \(r=\rank G\). The row bound \eqref{eq:row-bound} is calibrated to
dimension \(n-2\) through \(A_*=\sqrt{(n-2)/n}\), and the dilation identity
\eqref{eq:dilation} below must be used in the same dimension for the term
\(-u_i\) to cancel in \eqref{eq:components}. So we work in \(\R^{n-2}\), and
this is where \(\rank G\le n-1\) enters: since \(r-1\le n-2\), we adjoin unused
Gaussian coordinates to each \(v_i^\perp\), replace each region \(Q_i(b)\) by
\(Q_i(b)\times\R^{n-1-r}\subset\R^{n-2}\), and keep the names \(Q_i\) and
\(h_i\). The masses \(h_i(b)\) and their offset derivatives do not change.\footnote{Padding changes the dilation identity
\eqref{eq:dilation} only through the second moment: if
\(Q_i'\subset\R^{r-1}\) is the region before padding, so that
\(Q_i=Q_i'\times\R^{n-1-r}\), then
\(\int_{Q_i}|y|^2\,d\gamma_{n-2}(y)
=\int_{Q_i'}|y'|^2\,d\gamma_{r-1}(y')+(n-1-r)H_i\).} By \eqref{eq:conditional-data} and \eqref{eq:compatible}, and because
\(\beta_{ij}\sigma_{ij}=t(1-g_{ij})\),
\begin{equation}\label{eq:two-derivatives}
 \begin{aligned}
 \beta_i\cdot\nabla h_i(\beta_i)
 &=\frac1{\varphi(t)}\sum_{j\in J_i}\beta_{ij}\sigma_{ij}q_{ij}
 =\frac{tk_i}{\varphi(t)},\\
 \one\cdot\nabla h_i(\beta_i)
 &=\frac1{\varphi(t)}\sum_{j\in J_i}\sigma_{ij}q_{ij}
 =\frac{C_i}{\varphi(t)}.
 \end{aligned}
\end{equation}
Since \(q_{ij}=0\) for antipodal pairs, these sums over \(J_i\) equal the sums
over \(j\ne i\) in \eqref{eq:row-data}.

\subsection{The reciprocal tangent and the dilation identity}

The first fact is log-concavity in the offsets. For fixed unit normals \(w_1,\ldots,w_m\) in \(\R^{n-2}\), the function
\[
 (y,b)\longmapsto\varphi_{n-2}(y)\,\ind_{\{\ip{w_j}{y}\le b_j\ \forall j\}}
\]
is log-concave on \(\R^{n-2}\times\R^m\): it is the product of a log-concave
density and the indicator of a convex set. Pr\'ekopa's theorem
\cite[Theorem~6, p.~342]{Prekopa1973} therefore makes its marginal
\(h(b)=\gamma_{n-2}\{y:\ip{w_j}{y}\le b_j\text{ for every }j\}\) log-concave.
Unbounded polyhedra are allowed, since the integrand is measurable and bounded
by the integrable Gaussian density. On the positive orthant of offsets \(h>0\), so \(1/h=e^{-\log
h}\) is convex. At a point \(b\) where \(h\) is differentiable, the tangent
inequality \(1/h(c)\ge1/h(b)-\nabla h(b)\cdot(c-b)/h(b)^2\), multiplied by
\(h(b)^2\), reads
\begin{equation}\label{eq:reciprocal}
 \frac{h(b)^2}{h(c)}-h(b)\ge(b-c)\cdot\nabla h(b).
\end{equation}
Only differentiability at \(b\) is used. Huang, Xi, and Zhao prove the logarithmic form of this tangent inequality for convex bodies, in terms of support functions \cite[Lemma~4.3]{HXZ2020}. We use the reciprocal form because it produces the
quadratic term \(H_i^2/p_0\) that \cref{lem:scalar} below aggregates.

The second fact is an identity rather than an inequality. Since
\(Q_i(s\beta_i)=sQ_i(\beta_i)\) for \(s>0\), a change of variables gives
\[
 h_i(s\beta_i)=\int_{Q_i(\beta_i)}s^{n-2}(2\pi)^{-(n-2)/2}e^{-s^2|y|^2/2}\,dy.
\]
For \(1/2\le s\le3/2\) the \(s\)-derivative of the integrand is bounded by a
constant multiple of the integrable function \((1+|y|^2)e^{-|y|^2/8}\), so we
may differentiate at \(s=1\):
\begin{equation}\label{eq:dilation}
 \beta_i\cdot\nabla h_i(\beta_i)=(n-2)H_i-\int_{Q_i(\beta_i)}|y|^2\,d\gamma_{n-2}(y).
\end{equation}
Comparing \eqref{eq:dilation} with \eqref{eq:two-derivatives}, we split
\begin{equation}\label{eq:decomposition}
 H_i=u_i+e_i,\qquad
 u_i=\frac{tk_i}{(n-2)\varphi(t)},\qquad
 e_i=\frac1{n-2}\int_{Q_i(\beta_i)}|y|^2\,d\gamma_{n-2}(y).
\end{equation}
Here \(u_i>0\) because \(k_i>0\). Also \(J_i\ne\varnothing\) by
\cref{lem:row-positive}, the ball of radius \(\frac12\min_{j\in J_i}\beta_{ij}>0\)
lies in \(Q_i(\beta_i)\), and \(n-2\ge1\), so \(e_i>0\). If \(U=\sum_iu_i\), then
\begin{equation}\label{eq:normalized-u}
 \frac{u_i}U=\frac{k_i}\kappa=a_i.
\end{equation}

\subsection{From row inequalities to an aggregate bound}

Put \(\rho=\rho_n(t)=t/A_*\), and apply \eqref{eq:reciprocal} with \(b=\beta_i\)
and \(c=\rho\one\). The hypothesis \(h_i(\rho\one)\ge p_0\),
\eqref{eq:two-derivatives} and \cref{lem:row-bound} give
\begin{equation}\label{eq:component-chain}
 \begin{aligned}
 \frac{H_i^2}{p_0}-H_i
 &\ge\frac{H_i^2}{h_i(\rho\one)}-H_i
 \ge(\beta_i-\rho\one)\cdot\nabla h_i(\beta_i)\\
 &=\frac{tk_i-\rho C_i}{\varphi(t)}
 =(n-2)u_i\Bigl(1-\frac{A_i}{A_*}\Bigr)
 \ge u_i(na_i-1).
 \end{aligned}
\end{equation}
The right side \(u_i(na_i-1)\) is negative when \(a_i<1/n\), so
\eqref{eq:component-chain} alone does not give \(H_i\ge p_0\).
Substituting \(H_i=u_i+e_i\) and \(a_i=u_i/U\) cancels the term \(-u_i\) and
leaves
\begin{equation}\label{eq:components}
 \frac{H_i^2}{np_0}-\frac{u_i^2}U\ge\frac{e_i}n,
 \qquad H_i=u_i+e_i,\quad u_i,e_i>0.
\end{equation}
Both \eqref{eq:components} and \eqref{eq:decomposition} use
\eqref{eq:two-derivatives}: the same \(k_i=(S\one)_i\) from the first-order
conditions appears in \(\beta_i\cdot\nabla h_i(\beta_i)=tk_i/\varphi(t)\) and in
\(u_i\). The following scalar lemma sums the inequalities
\eqref{eq:components}.

\begin{lemma}\label{lem:scalar}
Let \(u_i,e_i>0\), \(x_i=u_i+e_i\), \(U=\sum_iu_i\), and \(E=\sum_ie_i\).
Let \(M>0\), and let \(\omega_i>0\) satisfy \(\sum_i\omega_i=1\). If
\[
 \frac{x_i^2}M-\frac{u_i^2}U\ge\omega_ie_i\qquad\text{for every }i,
\]
then \(P:=\sum_ix_i\ge M\). If \(P=M\), then for every \(i\)
\[
 \frac{u_i}U=\frac{e_i}E=\omega_i,\qquad x_i=M\omega_i.
\]
\end{lemma}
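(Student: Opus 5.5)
The plan is a purely algebraic aggregation. The $n$ hypotheses should be combined with well-chosen multipliers so that Cauchy--Schwarz produces exactly $P$ on one side and $M$ on the other. A first attempt already fails, and that failure shapes the plan.

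\emph{Attempt with multipliers $1/\omega_i$.} Dividing the $i$-th hypothesis by $\omega_i$ and summing gives
\[
 \frac1M\sum_i\frac{x_i^2}{\omega_i}\ \ge\ \frac1U\sum_i\frac{u_i^2}{\omega_i}+E\ \ge\ U+E=P .
\]
The second step is Cauchy--Schwarz, $\sum_iu_i^2/\omega_i\ge U^2$ because $\sum_i\omega_i=1$, with equality exactly when $u_i=\omega_iU$. The resulting bound $\sum_ix_i^2/\omega_i\ge MP$ cannot be closed: Cauchy--Schwarz also gives $\sum_ix_i^2/\omega_i\ge P^2$, which points the same way. So the $\omega$-weights must be combined with the $x$-weights.

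\emph{Main route: divide by $x_i$.} Dividing the $i$-th hypothesis by $x_i>0$ and summing gives
\[
 \frac PM\ \ge\ \sum_i\frac{u_i^2}{Ux_i}+\sum_i\frac{\omega_ie_i}{x_i}.
\]
It then suffices to show that the right side is at least $1$. For the first sum, Titu's inequality gives $\sum_iu_i^2/x_i\ge U^2/P$. I would try to show the second sum is at least $E/P$, since $U/P+E/P=1$. That bound is not automatic, because $e$ could concentrate where $x$ is large. I would therefore not split the two sums. Instead I would bound each combined term $\bigl(u_i^2/U+\omega_ie_i\bigr)/x_i$ from below by a quantity linear in $(u_i,e_i)$, using the tangent estimate
\[
 \frac{u_i^2}U\ \ge\ 2\omega_iu_i-\omega_i^2U,
\]
which is tight at $u_i=\omega_iU$. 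I would then optimize the resulting one-variable inequalities under the constraint $\sum_i\omega_i=1$.

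\emph{Fallback by contradiction.} If the direct route stalls, I would argue by contradiction. Suppose $P<M$. By homogeneity (scaling $u,e,M$ together preserves the hypotheses), normalize $P=1$. Then consider the minimum of $\sum_ix_i$ subject to $x_i\ge\sqrt{M\bigl(u_i^2/U+\omega_ie_i\bigr)}$. This is a convex-type program in $(u,e)$, and its Lagrange/KKT conditions should force the proportional configuration $u_i=\omega_iU$, $e_i=\omega_iE$. At that configuration the constraints read
\[
 x_i^2\ \ge\ M\omega_i^2(U+E)=M\omega_i^2P,
\]
so $x_i\ge\omega_i\sqrt{MP}$. Summing gives $P\ge\sqrt{MP}$, hence $P\ge M$.

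\emph{Equality case.} Once the inequality is established, the equality case comes from tracing where the argument can be tight. Equality $P=M$ forces equality in the Cauchy--Schwarz step, giving $u_i/U=\omega_i$. In the fallback route the proportional configuration $u_i=\omega_iU$, $e_i=\omega_iE$ is where the bound $P\ge M$ is attained, which would yield $e_i/E=\omega_i$. Each hypothesis is then an equality with $x_i=\omega_i(U+E)=M\omega_i$.

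\emph{Main obstacle.} The hard step is handling configurations where $e$ is concentrated at indices with large $x_i$. There the term-by-term Cauchy--Schwarz bounds are lossy. A small test with $U\to0$ shows that naive splitting, or comparing $\bigl(\sum_i\sqrt{\,\cdot\,}\bigr)^2$ with $P$, fails. The correct multiplier must genuinely use the quadratic hypothesis on each $x_i$. I would first try to find multipliers $\mu_i$ that depend on both $\omega_i$ and $x_i/P$. Only if that fails would I fall back to the KKT analysis, which requires showing that the minimizer of $\sum_ix_i$ has all hypotheses active and is proportional.
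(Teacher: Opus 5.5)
Your proposal does not yet contain a proof, and its main route cannot be completed. Dividing the $i$-th hypothesis by $x_i$ and summing gives $P/M\ge R$. Using $e_i=x_i-u_i$ and $\sum_i\omega_i=1$,
\[
 R=\sum_i\frac{u_i^2/U+\omega_ie_i}{x_i}
 =1+\sum_i\frac{u_i}{x_i}\Bigl(\frac{u_i}{U}-\omega_i\Bigr).
\]
This can be well below $1$. For $n=2$, $\omega=(\tfrac12,\tfrac12)$, $u=(1,3)$, $e_1\to0$ and $e_2\to\infty$, one gets $R\to\tfrac34$. Since $R$ does not involve $M$, for such data (with $M$ as large as the hypotheses allow) your route yields only $P\ge RM$. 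Replacing $u_i^2/U$ by its tangent line only lowers $R$.

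The fallback does not fill the gap. Once $x_i=u_i+e_i$ and $P=1$ are imposed, $\sum_ix_i$ is constant, so the program is not well posed. The assertion that Lagrange conditions force $u_i=\omega_iU$, $e_i=\omega_iE$ is precisely the content of the lemma. Verifying $P\ge M$ at the proportional configuration checks only the case where the lemma is trivial. Your equality argument inherits this gap: the Cauchy--Schwarz step that gave $u_i/U=\omega_i$ belongs to the first attempt, which you yourself showed does not close.

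The missing idea is to bring $P$ into each index separately, through the Cauchy--Schwarz inequality $(u_i+e_i)^2\le(U+E)\bigl(u_i^2/U+e_i^2/E\bigr)$, that is, $x_i^2/P\le u_i^2/U+e_i^2/E$. Subtracting it from the hypothesis gives
\[
 x_i^2\Bigl(\frac1M-\frac1P\Bigr)\ge e_i\Bigl(\omega_i-\frac{e_i}{E}\Bigr)\qquad\text{for every }i.
\]
Because $(e_i/E)_i$ and $(\omega_i)_i$ both sum to one, some index has $e_i/E\le\omega_i$. At that index the right side is nonnegative, so $P\ge M$. This single-index argument is the paper's proof. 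Equivalently, the multipliers you were looking for are $1/e_i$, applied to this difference rather than to the hypotheses alone: after dividing by $e_i$ and summing, the right sides add up to zero.

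For equality, suppose $P=M$. Then the left sides vanish, so $\omega_i\le e_i/E$ for every $i$, and summing forces $e_i/E=\omega_i$. The chain $x_i^2/M\ge u_i^2/U+e_i^2/E\ge x_i^2/P=x_i^2/M$ is therefore an equality, so Cauchy--Schwarz is tight. This gives $u_i/U=e_i/E=\omega_i$ and $x_i=(U+E)\omega_i=M\omega_i$.
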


\begin{proof}
Since \(P=U+E\), the Cauchy--Schwarz inequality gives
\begin{equation}\label{eq:scalar-cauchy}
 \frac{x_i^2}P\le\frac{u_i^2}U+\frac{e_i^2}E.
\end{equation}
Some index has \(e_i/E\le\omega_i\), because both sides sum to one. For this
index,
\[
 \frac{x_i^2}M\ge\frac{u_i^2}U+\omega_ie_i\ge\frac{u_i^2}U+\frac{e_i^2}E
 \ge\frac{x_i^2}P,
\]
and since \(x_i>0\), \(P\ge M\).

If \(P=M\), the hypothesis and \eqref{eq:scalar-cauchy} give, for every \(i\),
\[
 \omega_ie_i\le\frac{x_i^2}P-\frac{u_i^2}U\le\frac{e_i^2}E,
\]
so \(\omega_i\le e_i/E\). Summing forces \(e_i/E=\omega_i\) for every \(i\),
and then equality holds in \eqref{eq:scalar-cauchy}. The identity
\[
 \frac{u_i^2}U+\frac{e_i^2}E-\frac{x_i^2}{U+E}=\frac{(Eu_i-Ue_i)^2}{(U+E)UE}
\]
then gives \(u_i/U=e_i/E=\omega_i\), and \(x_i=(U+E)\omega_i=M\omega_i\).
\end{proof}

Apply \cref{lem:scalar} to \eqref{eq:components} with \(x_i=H_i\),
\(M=np_0\), and \(\omega_i=1/n\). This gives \(\sum_iH_i\ge np_0\), and
\eqref{eq:threshold-derivatives} gives \(F'_G(t)\ge n\varphi(t)p_0\).

\subsection{Equality}

At \(G=\Delta_n\), with \(p_0=D_{n-1}(\rho_n(t))\), the inequalities
\eqref{eq:row-cauchy}, \eqref{eq:component-chain} and \(\sum_iH_i\ge np_0\)
are equalities, since there
\(\beta_i=\rho_n(t)\one\), \(a_i=1/n\), \(A_i=A_*\), and
\(H_i=D_{n-1}(\rho_n(t))\). Conversely, suppose \(\sum_iH_i=np_0\). The
equality case of \cref{lem:scalar} gives
\begin{equation}\label{eq:balanced-equality}
 H_i=p_0,\qquad a_i=\frac1n\qquad(1\le i\le n).
\end{equation}
Then the first and last members of \eqref{eq:component-chain} vanish, so every
inequality in that chain is an equality. Since \(u_i>0\), this gives
\(A_i=A_*=\sqrt{1-2a_i}\), that is, \(C_i^2=k_i^2(1-2a_i)\), so
\eqref{eq:row-cauchy} is an equality throughout.

Equality in its Cauchy--Schwarz step makes the two lists proportional, with
constant \(C_i/k_i=A_*\). Thus, whenever \(q_{ij}>0\),
\[
 1+g_{ij}=A_*^2(1-g_{ij})=\frac{n-2}n(1-g_{ij}),
 \qquad\text{so}\qquad g_{ij}=-\frac1{n-1}.
\]
In each row a maximal off-diagonal entry has a positive weight by
\cref{lem:row-positive}, so every off-diagonal entry satisfies
\(g_{ij}\le-1/(n-1)\). Positive semidefiniteness now gives
\[
 0\le\one^\top G\one=n+2\sum_{i<j}g_{ij}\le n-\frac{n(n-1)}{n-1}=0,
\]
so every entry attains the bound and \(G=\Delta_n\). This completes the proof
of \cref{prop:finite-boundary}.

\subsection{Proof of the derivative bound}\label{sec:interface-end}

\begin{proof}[Proof of \cref{prop:interface}]
By \cref{prop:minimizer}, a minimizing covariance has distinct coordinates,
rank at most \(n-1\), and \(\eta_i\ge0\) for every \(i\). For each \(i\), the
conditional coordinates \((\ip{w_{ij}}{Y})_{j\in J_i}\) are at most \(n-1\) standard Gaussians, and at least one by \cref{lem:row-positive}. Repeating one of them
if necessary gives a vector with a covariance \(\Gamma_i\in\cE_{n-1}\) and
\(h_i(\rho_n(t)\one)=F_{\Gamma_i}(\rho_n(t))\), so \eqref{eq:lower-reference} gives
\(h_i(\rho_n(t)\one)\ge p_0\), and \cref{prop:finite-boundary} gives the
derivative bound and its equality case. Differentiability is part of
\cref{prop:boundary-calculus}.
\end{proof}

This completes the proof of \cref{thm:main}. We turn to its consequences.

\section{Consequences}\label{sec:consequences}

In this section we ask what \cref{thm:main} gives beyond the smoothed
comparison \eqref{eq:predecessor}. \Cref{cor:integrated} gives
\(\E\psi(M_G)\le\E\psi(M_{\Delta_n})\) for nondecreasing \(\psi\). Its cases
\(\psi(x)=x\) and \(\psi(x)=e^{\lambda x}\) already follow from
\eqref{eq:predecessor}, but a threshold indicator does not
(\cref{sec:smoothed}). The main
application, \cref{thm:identification}, is identification with an inactive
state, whose optimal success probability at amplitude \(\lambda>0\) and
false-alarm level \(\alpha\) is the truncated exponential moment
\(e^{-\lambda^2/2}\E[e^{\lambda M_G}\ind_{\{M_G>t_G(\alpha)\}}]/n\).
\Cref{sec:graphs} applies \eqref{eq:predecessor} and \cref{thm:main} to
independent sets in arc-transitive graphs.

\subsection{Integrated comparisons}\label{sec:integrated}

Since \eqref{eq:main} holds at every threshold, it passes to nondecreasing functions of the maximum.

\begin{corollary}\label{cor:integrated}
Let \(G\in\cE_n\), and let \(\psi:\R\to\R\) be nondecreasing with
\(\E|\psi(M_G)|<\infty\) and \(\E|\psi(M_{\Delta_n})|<\infty\). Then
\[
 \E\psi(M_G)\le\E\psi(M_{\Delta_n}).
\]
If \(G\ne\Delta_n\) and \(\psi(x)<\psi(y)\) for some \(0<x<y\), the
inequality is strict.
\end{corollary}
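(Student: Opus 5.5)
The plan is to write the expectation of a nondecreasing function as an integral of tail probabilities against the Stieltjes measure of \(\psi\), and then invoke \cref{thm:main} at each threshold.

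\emph{Representation of the difference.} I would work with the tails \(\bar F_G(s)=\PP\{M_G>s\}\) and \(\bar D_n(s)=\PP\{M_{\Delta_n}>s\}\). By \eqref{eq:main}, \(\bar F_G(s)\le\bar D_n(s)\) for every \(s\). Fix a reference point \(c\). For a nondecreasing \(\psi\), one has \(\psi(x)-\psi(c)=\int \bigl(\ind_{\{s<x\}}-\ind_{\{s<c\}}\bigr)\,d\psi(s)\), up to the convention at atoms of \(d\psi\). The atoms can be handled by using \(\ind_{\{x>s\}}\) together with right-continuous modifications. Since both maxima have continuous distributions, the value of \(\psi\) at its countably many jump points does not affect either expectation, so we may assume \(\psi\) is right-continuous.

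Taking expectations, and using Fubini separately on \(s\ge c\) and \(s<c\), gives
\[
 \E\psi(M_{\Delta_n})-\E\psi(M_G)=\int_{\R}\bigl(\bar D_n(s)-\bar F_G(s)\bigr)\,d\psi(s).
\]
The integrability hypotheses are what justify splitting into the parts \(s\ge c\) and \(s<c\). On each part the integrand of the Tonelli step has a fixed sign: for \(s\ge c\) it is \(\ind_{\{M>s\}}\ge0\), and for \(s<c\) it is \(-\ind_{\{M\le s\}}\). Each part is therefore finite, because \(\E|\psi(M)|<\infty\). This is the main technical step. It is routine but requires some care: Tonelli must be applied to each signed part separately for \(M_G\) and for \(M_{\Delta_n}\), and only afterwards are the results subtracted. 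Since the integrand \(\bar D_n-\bar F_G\) is nonnegative everywhere, the inequality \(\E\psi(M_G)\le\E\psi(M_{\Delta_n})\) follows.

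\emph{Strictness.} Suppose \(G\ne\Delta_n\) and \(\psi(x)<\psi(y)\) with \(0<x<y\). Then \(d\psi\) gives positive mass to the interval \((x,y]\), which lies inside \((0,\infty)\). By the equality clause \eqref{eq:main-equality}, \(\bar D_n(s)-\bar F_G(s)>0\) for every \(s>0\). This function is also continuous in \(s\), by \cref{lem:compact} and the continuity of \(D_n\). So it is bounded below by a positive constant on the compact interval \([x,y]\). The integral is then at least that constant times \(\psi(y)-\psi(x)>0\), and the inequality is strict.
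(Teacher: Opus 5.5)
Your proof is correct and is essentially the paper's: the paper gets the non-strict inequality from a quantile coupling, but its strictness argument is exactly your identity \(\E\psi(M_{\Delta_n})-\E\psi(M_G)=\int_\R[F_G(s)-D_n(s)]\,d\mu_\psi(s)\). That identity comes from Tonelli applied separately to the positive and negative parts, and the argument then uses positivity of the integrand on \((0,\infty)\). One small fix: after the right-continuous modification, \(\mu_\psi((x,y])=\psi(y+)-\psi(x+)\) can vanish (e.g.\ \(\psi=\ind_{(x,\infty)}\), where \(\mu_\psi=\delta_x\)), so use the closed interval \([x,y]\), whose mass is at least \(\psi(y)-\psi(x)>0\), as the paper does.
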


\begin{proof}
With \(U\) uniform on \((0,1)\), the quantile functions give random variables
\(F_G^{-1}(U)\) and \(D_n^{-1}(U)\) with the laws of \(M_G\) and
\(M_{\Delta_n}\), and \eqref{eq:main} gives \(F_G^{-1}(U)\le D_n^{-1}(U)\).
Applying \(\psi\) and taking expectations proves the inequality.

For strictness, note that \(M_G\) has no atoms, since
\(\{M_G=s\}\subset\bigcup_i\{X^G_i=s\}\); the same holds for \(M_{\Delta_n}\).
Replacing \(\psi\) by its right-continuous version changes it at countably
many points only, so it changes neither expectation; we keep the name
\(\psi\) for this version and let \(\mu_\psi\) be its Lebesgue--Stieltjes
measure. For every real \(x\),
\[
 \psi(x)-\psi(0)=\int_{(0,\infty)}\ind_{\{s\le x\}}\,d\mu_\psi(s)
 -\int_{(-\infty,0]}\ind_{\{x<s\}}\,d\mu_\psi(s),
\]
and at most one of the two terms is nonzero, so each has finite expectation
at \(x=M_G\) and at \(x=M_{\Delta_n}\). Taking expectations with Tonelli's
theorem, and using \(\PP\{M\ge s\}=1-\PP\{M\le s\}\) for the atomless maxima,
gives
\begin{equation}\label{eq:integrated-identity}
 \E\psi(M_{\Delta_n})-\E\psi(M_G)=\int_\R\bigl[F_G(s)-D_n(s)\bigr]\,
 d\mu_\psi(s).
\end{equation}
The integrand is nonnegative, and by \eqref{eq:main-equality} it is strictly
positive at every \(s>0\) when \(G\ne\Delta_n\). If \(\psi(x)<\psi(y)\) with
\(0<x<y\), then \(\mu_\psi([x,y])>0\), and the integral is positive.
\end{proof}

For \(n\ge3\) the condition on \(\psi\) is also necessary for strictness at
every \(G\ne\Delta_n\): if \(\psi\) is constant on \((0,\infty)\), a matrix
\(G\) with an antipodal pair has \(F_G=D_n=0\) on \((-\infty,0]\), and
\eqref{eq:integrated-identity} gives equality. For \(n=2\), \(F_G-D_2>0\) on all of
\(\R\) when \(G\ne\Delta_2\), so every nonconstant \(\psi\) gives
strictness.

\Cref{thm:main} also extends to Gaussian scale mixtures. If \(A>0\) is
independent of \(X^G\), conditioning on \(A\) gives
\(\PP\{AM_G\le t\}=\E F_G(t/A)\ge\E D_n(t/A)=\PP\{AM_{\Delta_n}\le t\}\) for
every \(t\), with strict inequality for \(t>0\) when \(G\ne\Delta_n\).

Two cases of \cref{cor:integrated} are classical objectives. Taking \(\psi(x)=x\) gives
\(\E M_G\le\E M_{\Delta_n}=\sqrt{n/(n-1)}\,\E\max_iZ_i\), since
\(\max_i(Z_i-\overline Z)=\max_iZ_i-\overline Z\). For \(Z\sim\gamma_m\),
\[
 \E\max_i\ip{v_i}{Z}=\tfrac12\E|Z|\,w(\conv\{v_1,\ldots,v_n\}),\qquad
 w(K)=2\E\max_{y\in K}\ip{\theta}{y},
\]
where \(\theta\) is uniform on \(S^{m-1}\) and \(w(K)\) is the mean width of
\(K\) \cite[(1), (3), and Proposition~1.1]{KLZ2016}, \cite[p.~74, (1)]{Litvak2018}. So among convex
hulls of \(n\) unit vectors in \(\R^m\), \(m\ge n-1\), the regular simplex
maximizes mean width, uniquely up to an orthogonal transformation. For
\(m=n-1\) this is the Simplex Mean Width Conjecture
\cite[Conjecture~1.1]{Litvak2018}, whose Gaussian form
\cite[Conjecture~1.4]{Litvak2018} is the case \(\psi(x)=x\) above. Litvak
observed that the conjecture of Balitskiy, Karasev, and Tsigler would imply it
\cite[pp.~75--76]{Litvak2018}, and Sun, Hu, and Lan proved the case \(n=4\)
\cite{SunHuLan2020}.

Taking \(\psi(x)=e^{\lambda x}\), \(\lambda>0\), gives the Weak Simplex Conjecture:
for equally likely signals \(\lambda v_1,\ldots,\lambda v_n\) with \(v_i\)
unit vectors in \(\R^m\), sent through additive noise \(Z\sim\gamma_m\),
maximum-likelihood decoding succeeds with probability
\begin{equation}\label{eq:decoding}
 P_{\mathrm c}=\frac1n\int_{\R^m}\max_i\varphi_m(y-\lambda v_i)\,dy
 =\frac{e^{-\lambda^2/2}}n\,\E\exp\Bigl(\lambda\max_i\ip{v_i}{Z}\Bigr),
\end{equation}
as follows by expanding \(|y-\lambda v_i|^2=|y|^2+\lambda^2-2\lambda
\ip{v_i}{y}\) \cite[pp.~487--488, (2.2)--(2.3)]{Balakrishnan1961}.\footnote{The first
expression holds with repeated signals and any measurable tie rule.} So the regular
simplex maximizes \(P_{\mathrm c}\), uniquely when \(m\ge n-1\); when
\(m<n-1\) no configuration in \(\R^m\) has Gram matrix \(\Delta_n\) and the
bound is strict for every code. Both comparisons appear in \cite[Corollaries~2.5--2.7]{Mulgund2026}, where the mean-width statement is proved for points anywhere in the unit ball. Their equality cases were obtained independently in our preceding paper
\cite{Mulgund2026} and by Su et al., for decoding in \cite[Theorems~2 and~3(a)]{SuYangXuI2026}
and for the mean width of full-dimensional simplices in the unit ball in \cite[Corollary~4]{SuYangXuI2026}.

\subsection{Identification with an inactive state}\label{sec:identification}

Ordinary decoding always outputs a label. Suppose instead that the receiver may
declare that no signal is present, and that we constrain how often it raises a
false alarm. For a fixed amplitude \(\lambda>0\), the observation is
\[
 Y=Z\quad\text{under inactivity},\qquad
 Y=\lambda u_i+Z\quad\text{under active label }i,\qquad Z\sim\cN(0,I_m),
\]
where \(u_1,\ldots,u_n\in\R^m\) are unit vectors with Gram matrix \(G\). A
decision rule outputs \(0\) (inactive) or a label. Its false-alarm
probability is the probability that it outputs a label under inactivity.
Among rules with false-alarm probability at most \(\alpha\), we want to
maximize the average probability of outputting the correct label under the
\(n\) equally weighted active hypotheses, and we ask which signal geometry
makes this largest at every \(\alpha\).

The same analysis applies when the amplitude is random but unobserved, with a
common law for all labels, and we state it in that generality. Fix \(n\ge2\),
\(m\ge1\), and unit vectors \(u_1,\ldots,u_n\in\R^m\) with Gram matrix \(G\).
Let the amplitude \(\Lambda\) be nonnegative, finite almost surely, independent of \(Z\), with the same law \(\mu\) under every active label. Write \(\PP_0\) for the law of \(Y=Z\) and
\(\PP_{i,\mu}\) for the law of \(Y=\Lambda u_i+Z\). A decision rule \(\delta\), which
may use independent randomization, takes values in \(\{0,1,\ldots,n\}\); its
false-alarm probability is \(\operatorname{FA}(\delta)=\PP_0\{\delta\ne0\}\).
The best average probability of correct identification is
\begin{equation}\label{eq:id-value-definition}
 C_G(\mu,\alpha)=\sup_{\delta:\operatorname{FA}(\delta)\le\alpha}
 \frac1n\sum_{i=1}^n\PP_{i,\mu}\{\delta=i\},
\end{equation}
and for a deterministic amplitude \(\lambda\ge0\) we write \(C_G(\lambda,\alpha)\) for the value when \(\mu\) is the point mass at \(\lambda\). By \eqref{eq:id-frontier} below, the value depends on the vectors only through \(G\), and not on \(m\), so \(C_G(\mu,\alpha)\) is defined for every \(G\in\cE_n\) through vectors realizing \(G\) in any dimension \(m\ge\rank G\). Under \(\PP_0\) we realize
\(X^G_i=\ip{u_i}{Y}\), and we define
\begin{equation}\label{eq:id-profile}
 \ell_\mu(x)=\int_{[0,\infty)}e^{\lambda x-\lambda^2/2}\,d\mu(\lambda),\qquad
  B_G=\frac1n\ell_\mu(M_G).
\end{equation}
Since the density of \(\cN(\lambda u_i,I_m)\) with respect to \(\cN(0,I_m)\) is \(e^{\lambda\ip{u_i}{y}-\lambda^2/2}\), the function \(y\mapsto\ell_\mu(\ip{u_i}{y})\) is the likelihood ratio of \(\PP_{i,\mu}\) with respect to \(\PP_0\).

For a fixed code, the optimal rule is classical: accept the largest
likelihood ratio when it exceeds a threshold. Weinberger and Merhav give it
with false-alarm and missed-detection constraints
\cite[Section~III, Lemma~1, (11)--(12)]{WeinbergerMerhav2014}, where a missed
detection is an output \(0\) under an active label. Setting the
multiplier of the missed-detection constraint to zero gives the rule used
here; see also \cite[Section~II-B, (4)--(5)]{ObermuellerEtAl2025}. The false-alarm, missed-detection, and inclusive-error criteria are separated in
\cite[Section~II, (3)--(5)]{LanchoOstmanDurisi2021}. An inclusive error is any
output other than the active label, so our objective is one minus the
inclusive-error probability. Here we optimize over the code and identify the optimal
equal-energy geometry, with uniqueness.

\begin{theorem}\label{thm:identification}
Suppose \(\mu((0,\infty))>0\) and \(0<\alpha\le1\). For \(0<\alpha<1\), let
\(t_G(\alpha)\) satisfy
\begin{equation}\label{eq:id-null-quantile}
 \PP\{M_G>t_G(\alpha)\}=\alpha.
\end{equation}
An optimal rule selects a label maximizing \(\ip{u_i}{Y}\) and outputs it if
\(\max_i\ip{u_i}{Y}>t_G(\alpha)\), and outputs \(0\) otherwise; at
\(\alpha=1\) it always outputs a maximizing label. The same rule is optimal for every such amplitude law, and in particular for every deterministic amplitude \(\lambda>0\). The optimal value satisfies
\begin{align}
 C_G(\mu,\alpha)&=\int_0^\infty\min\{\alpha,\PP(B_G>s)\}\,ds
 \le C_{\Delta_n}(\mu,\alpha),\label{eq:id-frontier}\\
 C_G(\mu,\alpha)&=\int_{[0,\infty)}C_G(\lambda,\alpha)\,d\mu(\lambda),\label{eq:id-linearity}
\end{align}
and equality holds in the comparison in \eqref{eq:id-frontier} if and only if
\(G=\Delta_n\). If \(m\ge n-1\), a regular simplex attains the bound, and it
also maximizes the smallest success probability over labels:
\begin{equation}\label{eq:id-label-minimax}
 \sup_{u_1,\ldots,u_n\in S^{m-1}}\ \sup_{\delta:\operatorname{FA}(\delta)\le\alpha}
 \ \min_{1\le i\le n}\PP_{i,\mu}\{\delta=i\}=C_{\Delta_n}(\mu,\alpha).
\end{equation}
When \(m\ge n-1\), the maximizing Gram matrix in either optimization is
\(\Delta_n\) and no other.
\end{theorem}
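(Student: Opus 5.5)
We follow the classical Neyman--Pearson route for a composite alternative and then use \cref{thm:main} through the quantile representation. Fix the code and amplitude law \(\mu\). Any rule \(\delta\) with \(\operatorname{FA}(\delta)\le\alpha\) is described under \(\PP_0\) by acceptance functions \(\phi_i(y)=\PP\{\delta=i\mid Y=y\}\), with \(\sum_i\phi_i\le1\) and \(\E_0\sum_i\phi_i\le\alpha\). By the likelihood-ratio remark after \eqref{eq:id-profile},
\[
 \frac1n\sum_i\PP_{i,\mu}\{\delta=i\}=\E_0\Bigl[\sum_i\phi_i(Y)\,\tfrac1n\ell_\mu(\ip{u_i}{Y})\Bigr]\le\E_0\bigl[\phi(Y)\,B_G\bigr],
\]
where \(\phi=\sum_i\phi_i\in[0,1]\) and \(B_G=\frac1n\ell_\mu(M_G)\). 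For \(\mu((0,\infty))>0\), the function \(\ell_\mu\) is nondecreasing, and it is strictly increasing where finite. The bound is attained by putting all of \(\phi\) on a maximizing label. We then maximize \(\E_0[\phi B_G]\) subject to \(\E_0\phi\le\alpha\) and \(0\le\phi\le1\). This is the Neyman--Pearson problem, and its value is the bathtub integral \(\int_0^\infty\min\{\alpha,\PP(B_G>s)\}\,ds\): write \(\E[\phi B]=\int_0^\infty\E[\phi\ind_{\{B>s\}}]\,ds\) and bound each integrand by \(\min\{\alpha,\PP(B>s)\}\). Because \(M_G\) is atomless and \(\ell_\mu\) is monotone, the rule \(\phi=\ind_{\{M_G>t_G(\alpha)\}}\) attains every integrand bound simultaneously. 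This gives the optimal rule and the first equality in \eqref{eq:id-frontier}. Since the rule does not depend on \(\mu\), its success probability is linear in \(\mu\). Conditioning on \(\Lambda\) (Tonelli) gives \eqref{eq:id-linearity}.

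For the comparison, write
\[
 C_G(\mu,\alpha)=\E\bigl[B_G\ind_{\{M_G>t_G(\alpha)\}}\bigr]=\int_{1-\alpha}^1\tfrac1n\ell_\mu\bigl(F_G^{-1}(v)\bigr)\,dv
\]
using the quantile coupling of \cref{cor:integrated}. By \cref{thm:main}, \(F_G^{-1}\le D_n^{-1}\) pointwise, so monotonicity of \(\ell_\mu\) gives \(C_G\le C_{\Delta_n}\). Strictness for \(G\ne\Delta_n\) is the main obstacle, because it must hold on the upper tail \(v\in(1-\alpha,1)\) for every \(\alpha>0\). Here \eqref{eq:main-equality} enters. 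For \(v\) close to \(1\) the quantiles are positive, and strict inequality \(F_G(s)>D_n(s)\) at every \(s>0\) forces \(F_G^{-1}(v)<D_n^{-1}(v)\) for all \(v\) in a subinterval of \((1-\alpha,1)\). To see this, note that \(D_n\) is continuous and strictly increasing on \((0,\infty)\), so \(F_G(D_n^{-1}(v))>v\), hence \(F_G^{-1}(v)<D_n^{-1}(v)\). Strict monotonicity of \(\ell_\mu\), which uses \(\mu((0,\infty))>0\), then gives a positive gap on a set of positive measure. Finiteness of \(C_{\Delta_n}\) needs a check, but \(C_{\Delta_n}\le\frac1n\E\ell_\mu(M_{\Delta_n})\le1\), since each \(\E_0\ell_\mu(\ip{u_i}{Y})=1\). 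For \(n=2\) one also covers thresholds \(s\le0\), but positive \(s\) already suffice.

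For the geometric statements, take \(m\ge n-1\) and a regular simplex with Gram matrix \(\Delta_n\); it attains \(C_{\Delta_n}\). Uniqueness of the Gram matrix follows from the strict comparison. For the minimax claim \eqref{eq:id-label-minimax}, the minimum over labels is at most the average, so the left side is at most \(\sup_G C_G=C_{\Delta_n}\). Conversely, for the simplex the optimal rule is invariant under the permutation symmetry of \(\Delta_n\), so all labels have equal success probability \(C_{\Delta_n}\). Any maximizing Gram matrix for the minimax problem must therefore have average equal to \(C_{\Delta_n}\), and hence equals \(\Delta_n\) by the strict inequality.
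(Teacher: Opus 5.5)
Your proof is correct and follows essentially the paper's route: reduction to \(\E_0[pB_G]\), the bathtub bound attained by the threshold rule, Tonelli for \eqref{eq:id-linearity}, and minimum-versus-average for \eqref{eq:id-label-minimax}. For \eqref{eq:id-linearity}, say explicitly that the rule is also optimal at \(\lambda=0\); your attainment argument already covers this, since it uses only monotonicity of \(\ell_\mu\) and \(\ell_{\delta_0}\equiv1\).

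The only variation is in the comparison. You integrate over quantile levels, \(C_G(\mu,\alpha)=\frac1n\int_{1-\alpha}^1\ell_\mu(F_G^{-1}(v))\,dv\), and use the pointwise strict inequality \(F_G^{-1}<D_n^{-1}\). The paper instead feeds \(B_G\le_{\mathrm{st}}B_{\Delta_n}\) into the bathtub integral and gets strictness from the change of variables \(s=\ell_\mu(x)/n\) on a window above the simplex's \(\alpha\)-quantile; the two arguments are equivalent.
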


\begin{proof}
First we record the properties of \(\ell_\mu\). For every \(x\in\R\),
\[
 0<\ell_\mu(x)\le e^{(x_+)^2/2},\qquad
 \ell_\mu'(x)=\int\lambda e^{\lambda x-\lambda^2/2}\,d\mu(\lambda)>0,
\]
where the bound follows by maximizing \(\lambda x-\lambda^2/2\) over
\(\lambda\ge0\), and differentiation under the integral is justified because
\(\lambda e^{\lambda x-\lambda^2/2}\) is bounded uniformly in \(\lambda\ge0\)
and in \(x\) in any bounded interval. Tonelli's theorem and
\(\ell_\mu(M_G)\le\sum_i\ell_\mu(X^G_i)\) give
\begin{equation}\label{eq:id-integrability}
 \E\ell_\mu(N)=1\quad(N\sim\cN(0,1)),\qquad
 \E B_G\le\frac1n\sum_i\E\ell_\mu(X^G_i)=1.
\end{equation}

Now let \(r_i(y)\) be the conditional probability that a rule outputs label
\(i\), and \(p(y)=\sum_{i=1}^nr_i(y)\) its probability of outputting a label.
Its average success is
\[
 \frac1n\sum_i\PP_{i,\mu}\{\delta=i\}
 =\frac1n\E_0\sum_i\ell_\mu(\ip{u_i}{Y})\,r_i(Y)\le\E_0[B_Gp(Y)],
 \qquad 0\le p\le1,\quad\E_0p\le\alpha,
\]
and allocating each accepted observation to a label maximizing \(\ip{u_i}{Y}\) attains the
inequality. For any such \(p\), Tonelli's theorem gives
\begin{equation}\label{eq:id-envelope-bound}
 \E[B_Gp]=\int_0^\infty\E\bigl[p\,\ind_{\{B_G>s\}}\bigr]\,ds
 \le\int_0^\infty\min\{\alpha,\PP(B_G>s)\}\,ds.
\end{equation}
The bound is attained by accepting the upper tail of \(B_G\): choose \(c\) with
\(\PP(B_G>c)\le\alpha\le\PP(B_G\ge c)\), and let \(p=1\) on \(\{B_G>c\}\),
\(p=0\) on \(\{B_G<c\}\), and \(p\) equal on \(\{B_G=c\}\) to the constant in \([0,1]\) that makes
\(\E_0p=\alpha\); this gives equality in the integrand for almost every
\(s\). At \(\alpha=1\), take \(p=1\).

In fact no randomization is needed. The maximum \(M_G\) has no atoms, since
\(\{M_G=t\}\subset\bigcup_i\{X^G_i=t\}\), so \eqref{eq:id-null-quantile} has a
solution. Since \(\ell_\mu\) is strictly increasing,
\(\{M_G>t_G(\alpha)\}=\{B_G>\ell_\mu(t_G(\alpha))/n\}\), so the stated rule
accepts the upper \(\alpha\)-tail of \(B_G\). Ties between identical
directions can be assigned to any fixed maximizing label. This proves the
formula in \eqref{eq:id-frontier}.

\Cref{thm:main} gives \(B_G\le_{\mathrm{st}}B_{\Delta_n}\), and so the comparison in \eqref{eq:id-frontier}. For strictness, suppose \(G\ne\Delta_n\)
and choose \(x_0>0\) with \(\PP\{M_{\Delta_n}>x_0\}<\alpha\). For
\(s\in[\ell_\mu(x_0)/n,\ell_\mu(x_0+1)/n]\), both tails
\(\PP(B_G>s)\le\PP(B_{\Delta_n}>s)\) are below \(\alpha\), and the change of
variable \(s=\ell_\mu(x)/n\) gives
\begin{equation}\label{eq:id-strictness}
 C_{\Delta_n}(\mu,\alpha)-C_G(\mu,\alpha)
 \ge\frac1n\int_{x_0}^{x_0+1}\bigl[F_G(x)-D_n(x)\bigr]\ell'_\mu(x)\,dx>0,
\end{equation}
by \eqref{eq:main-equality}. Only thresholds above \(x_0>0\) enter, so the equality case of \cref{thm:main} at positive thresholds suffices.

The stated rule does not depend on \(\mu\), so it is optimal for every point
mass at \(\lambda>0\). It is also optimal at \(\lambda=0\): there the active and inactive
laws coincide, so every rule has average success
\(\frac1n\PP_0\{\delta\ne0\}\le\alpha/n\), which the stated rule attains.
Integrating its success probabilities over \(\mu\), by Tonelli's theorem,
proves \eqref{eq:id-linearity}.

Finally let \(m\ge n-1\) and realize \(\Delta_n\) by a regular simplex. Its
permutations are induced by orthogonal maps, so the rule of the theorem succeeds with
the same probability under every label, and ties have probability zero. The
smallest success probability of any rule is at most its average, so
\eqref{eq:id-frontier} proves \eqref{eq:id-label-minimax} and its attainment,
and \eqref{eq:id-strictness} proves uniqueness.
\end{proof}

For a deterministic amplitude \(\lambda>0\) and \(0<\alpha<1\), we have
\(\ell_\mu(x)=e^{\lambda x-\lambda^2/2}\) in \eqref{eq:id-profile}, and the
optimal rule gives a truncated exponential moment of the maximum,
\[
 C_G(\lambda,\alpha)=\frac{e^{-\lambda^2/2}}n\,
 \E\bigl[e^{\lambda M_G}\ind_{\{M_G>t_G(\alpha)\}}\bigr].
\]
For the simplex it is explicit in terms of \(D_n\). Write
\(t_n(\alpha)=t_{\Delta_n}(\alpha)\), the unique positive solution of
\(D_n(t_n(\alpha))=1-\alpha\) (\cref{lem:compact,lem:recurrence}).
Integration by parts gives
\begin{equation}\label{eq:id-simplex-value}
 C_{\Delta_n}(\lambda,\alpha)
 =\frac{e^{-\lambda^2/2}}n\Bigl[\alpha e^{\lambda t_n(\alpha)}
  +\lambda\int_{t_n(\alpha)}^\infty e^{\lambda t}\bigl[1-D_n(t)\bigr]\,dt\Bigr].
\end{equation}

At \(\alpha=1\) the value is \(e^{-\lambda^2/2}\E e^{\lambda M_{\Delta_n}}/n\), the decoding
probability \eqref{eq:decoding}, so the all-accept endpoint of
\cref{thm:identification} is the Weak Simplex Conjecture, already known with
its equality case (\cref{sec:integrated}). At \(\alpha=0\) every rule has
zero success, since each active law is absolutely continuous with respect to
\(\PP_0\), and at amplitude \(0\) the value is \(C_G(0,\alpha)=\alpha/n\) for
every \(G\), by the proof of \cref{thm:identification}. Two refinements,
adversarial amplitudes and Bayes accuracy, are in \cref{app:identification}.

The theorem assumes that the active labels are equally weighted, and without
this assumption it is false. Take \(n=3\), \(m\ge2\), a fixed amplitude
\(\lambda>0\), \(\alpha=1\), and prior weights \((1-\varepsilon)/2\),
\((1-\varepsilon)/2\), \(\varepsilon\). For the regular triangle, every rule
satisfies
\[
 \PP_1\{\delta=1\}+\PP_2\{\delta=2\}
 \le1+\operatorname{TV}(\PP_1,\PP_2)=2\Phi\Bigl(\frac{\lambda|u_1-u_2|}2\Bigr)
 =2\Phi\Bigl(\frac{\sqrt3\lambda}2\Bigr)
\]
and \(\PP_3\{\delta=3\}\le1\), so its weighted success is at most
\((1-\varepsilon)\Phi(\sqrt3\lambda/2)+\varepsilon\). Two antipodal signals for
labels \(1\) and \(2\), with the rule that outputs the sign of \(\ip{u_1}{Y}\) and never
outputs label \(3\), give \((1-\varepsilon)\Phi(\lambda)\). The latter is larger when
\(0<\varepsilon/(1-\varepsilon)<\Phi(\lambda)-\Phi(\sqrt3\lambda/2)\), which
holds for all small \(\varepsilon>0\).

\subsection{Independent sets in arc-transitive graphs}\label{sec:graphs}

Harangi and Vir\'ag \cite[Definitions~2.1 and~2.3, Remark~2.2]{HV2015} turn
Gaussian eigenvectors of a graph into independent sets: the strict local
maxima of a random eigenvector form an independent set. For \(d\ge3\) and
\(-d\le\lambda\le d\), put
\[
 \varrho_\lambda=\frac{d-2-\lambda}{2(d-1)},\qquad
 q_d(\lambda)=\PP\{X^{\Omega_\lambda}_i>0\text{ for every }i\},\qquad
 \Omega_\lambda=(1-\varrho_\lambda)I+\varrho_\lambda J\in\R^{d\times d}.
\]
As \(\lambda\) goes from \(-d\) to \(d\), \(\varrho_\lambda\) decreases from
\(1\) to \(-1/(d-1)\), so \(\Omega_\lambda\in\cE_d\). This \(q_d\) is the function of
\cite[Definition~2.9]{HV2015}. A graph is \emph{arc-transitive} if its
automorphisms act transitively on ordered pairs of adjacent vertices. Harangi
and Vir\'ag proved the following bound for cubic vertex-transitive graphs
\cite[Theorem~3]{HV2015} and for degree-four arc-transitive graphs
\cite[Theorem~2]{HV2015}, and showed that for arc-transitive graphs of every
degree it would follow from a conjectured extremal property of spherical caps
\cite[Section~2.2, Conjecture~2.13]{HV2015}, which is \cref{prob:caps} below. The following corollary proves the bound in every degree without that conjecture.

\begin{corollary}\label{cor:graph}
Let \(\mathcal G=(V,E)\) be a nonempty finite simple \(d\)-regular
arc-transitive graph with \(d\ge3\), independence number
\(\alpha(\mathcal G)\), and least adjacency eigenvalue \(\lambda_{\min}\).
Then
\[
 \frac{\alpha(\mathcal G)}{|V|}\ge q_d(\lambda_{\min}).
\]
\end{corollary}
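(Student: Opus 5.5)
We follow the Harangi--Vir\'ag scheme and supply the missing Gaussian comparison from \eqref{eq:predecessor}. Let \(A\) be the adjacency matrix and \(\lambda=\lambda_{\min}\). Take a standard Gaussian vector \(\Xi\) in the \(\lambda\)-eigenspace of \(A\), that is, the orthogonal projection of a standard Gaussian vector in \(\R^V\). The set \(I=\{v:\Xi_v>\Xi_u\text{ for every }u\sim v\}\) of strict local maxima is independent. Arc-transitivity makes the law of \(\Xi\) invariant under automorphisms, so \(\E|I|=|V|\,\PP\{v\in I\}\) for a fixed \(v\). Since some realization has \(|I|\ge\E|I|\), we get \(\alpha(\mathcal G)/|V|\ge\PP\{v\in I\}\). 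It therefore suffices to show
\[
\PP\{\Xi_v-\Xi_u>0\text{ for all }u\sim v\}\ge q_d(\lambda).
\]

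First compute the law of \(D_u=\Xi_v-\Xi_u\) over the \(d\) neighbours \(u\) of \(v\). The covariance of \(\Xi\) is the spectral projection \(P\). Transitivity gives \(P_{vv}=p\), a common value \(P_{vu}=c\) on edges, and \(\sum_{u\sim v}P_{vu}=\lambda p\) from \(AP=\lambda P\), so \(c=\lambda p/d\). The entries \(P_{uw}\) for distinct neighbours \(u,w\) need not be constant, so the normalized vector \((D_u)\) has correlation matrix
\[
\Gamma=(1-\bar\varrho)I+\bar\varrho J+E,
\]
where \(E\) has zero diagonal and some off-diagonal pattern coming from the second neighbourhood. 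Here \(\bar\varrho\) is the average correlation, and the identity \(\sum_{u\ne w\sim v}P_{uw}=\lambda^2p-dp\) (from \(A^2P=\lambda^2P\)) gives \(\bar\varrho=\varrho_\lambda\). So \(\Gamma\) has the same average off-diagonal entry as \(\Omega_\lambda\). This average is \((d-2-\lambda)/(2(d-1))\), and the step to verify is that this holds for the least eigenvalue.

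The key step is to show that, among correlation matrices in \(\cE_d\) with fixed average correlation \(\varrho_\lambda\), the orthant probability \(\PP\{X>0\}=\PP\{\max_i(-X_i)<0\}\) is minimized by the equicorrelated \(\Omega_\lambda\). Write \(\Omega_\lambda=(1-\varrho)\Delta_d'+\dots\). More concretely, decompose any such \(\Gamma\) as a variance-\(\theta\) common Gaussian factor \(B\one\) plus a remainder whose correlation matrix \(G\in\cE_d\) is arbitrary. Because the average correlation is forced, \(\theta\) equals the weight that puts \(\Omega_\lambda\) in the form \(\sqrt{1-\theta}\,\xi+\sqrt\theta\,B\one\). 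Thus \(\PP\{X>0\}=\PP\{\sqrt{1-\theta}\max_i(-Y_i)<\sqrt\theta B\}\), and \eqref{eq:predecessor}, or \cref{thm:main} after conditioning on \(B\), gives the comparison with the simplex. This works only when the remainder can be taken with unit variances, and \(\theta\) must match the value \(\sigma^2=1/(d-1)\) of \eqref{eq:predecessor}.

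Projecting \(\Gamma\) onto \(\one^\perp\) and onto \(\one\) separately does not produce unit-variance pieces in general. The main obstacle is therefore to choose a decomposition with a common factor plus a unit-variance remainder and to check the variance bookkeeping. At \(\lambda=\lambda_{\min}\) with \(\lambda\ge-d\), I would first use symmetrization: average \(\Gamma\) over coordinate permutations. Jensen does not apply directly, but \cref{thm:main} applied conditionally on \(B\) handles any \(G\). So I would reduce the required decomposition to checking \(\Gamma-\theta J\succeq0\) with \(\theta\) making the diagonal equal \(1-\theta\). This is where the least eigenvalue enters: \(\Gamma-\theta J\succeq0\) should follow from \(\lambda\) being minimal, via positivity of \(P\) restricted to \(\{v\}\cup N(v)\).
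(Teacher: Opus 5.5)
There is a genuine gap. Your reduction to $\PP\{\Xi_v>\Xi_u\ \forall u\sim v\}\ge q_d(\lambda_{\min})$ is correct, and so is your computation that the normalized differences have average correlation $\varrho_\lambda$. But the comparison itself is never established, and the route you propose for it fails.

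Your ``key step'' says that at fixed average correlation $\varrho_\lambda$ the equicorrelated $\Omega_\lambda$ minimizes the orthant probability. This is false even in the range you need. Take $d=4$ and $\lambda=-1$, the least eigenvalue of $K_5$; then $\varrho_\lambda=1/2$ and $q_4(-1)=1/5$. Let $w_1,w_2,w_3\in\R^3$ be unit vectors with pairwise inner products $b=(4-\sqrt{10})/3$, and let $w_4=(w_1+w_2+w_3)/|w_1+w_2+w_3|$. Their Gram matrix lies in $\cE_4$. Its off-diagonal entries are $b$ three times and $1-b$ three times, so they average to $1/2$. Since $w_4$ is a positive multiple of $w_1+w_2+w_3$, the constraint $\ip{w_4}{z}>0$ follows from the other three. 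Hence the orthant probability is $\frac18+\frac3{4\pi}\arcsin b<\frac18+\frac3{4\pi}\cdot\frac\pi{10}=\frac15$, because $b<\sin(\pi/10)$.

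So any valid argument must use structure of your $\Gamma$ beyond its average, namely a decomposition $\Gamma=\theta J+(1-\theta)G$ with $G\in\cE_d$. You leave this as ``should follow from $\lambda$ being minimal''. It is not proved, and minimality is not why it holds. Choosing $\theta$ ``so that the diagonal equals $1-\theta$'' also does not determine $\theta$, since every $\theta$ does that.

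The missing idea is to condition on the value at $v$ itself. With unit variances, arc-transitivity gives $\Cov(X_v,X_u)=c=\lambda/d$ for every neighbour $u$. Put $Y_u=(X_u-cX_v)/\sqrt{1-c^2}$. Each $Y_u$ has unit variance and is uncorrelated with, hence independent of, $X_v$. Also $\sum_uY_u=0$ by the eigenvector equation $\sum_{u\sim v}X_u=\lambda X_v$.

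The local-maximum event becomes $\{\max_uY_u<\tau X_v\}$ with $\tau=\sqrt{(d-\lambda)/(d+\lambda)}$. So $\PP\{v\in I\}=\E F_G(\tau B)$ with $G=\Cov(Y)\in\cE_d$ and $B\sim\cN(0,1)$. The same algebra applied to $\Omega_\lambda$ gives $q_d(\lambda)=\E D_d(\tau B)$. In your notation this says exactly $\Gamma=\theta J+(1-\theta)G$ with $\theta=(1-c)/2$. Equivalently, the eigenvector equation makes $\one$ an eigenvector of $\Gamma$, and this holds at every eigenvalue.

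The least eigenvalue enters only afterwards, and as an inequality rather than a matching condition. Applying \eqref{eq:predecessor} needs only $\tau^2\ge1/(d-1)$: add independent noise of variance $\tau^2-1/(d-1)$ to both sides, which preserves stochastic order. The condition $\tau^2\ge1/(d-1)$ is equivalent to $\lambda\le d-2$. This holds because $\lambda_{\min}\le-1$, by the Rayleigh quotient of $\varepsilon_u-\varepsilon_v$ on an edge. Alternatively, apply \cref{thm:main} conditionally on $X_v$, which needs no condition on $\lambda$. The bipartite case $\lambda_{\min}=-d$, where $c=-1$ and $\tau$ is undefined, must be treated separately; there both sides equal $1/2$.
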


The proof is in \cref{app:graphs}. For an eigenvalue \(\lambda\) of
\(\mathcal G\), let \(p_{\mathcal G}(\lambda)\) be the probability that a
given vertex is a strict local maximum of the normalized Gaussian eigenvector
of \(\lambda\) defined in \cref{app:graphs}. The strict local maxima of this
eigenvector form an independent set, so
\(\alpha(\mathcal G)\ge|V|\,p_{\mathcal G}(\lambda)\). For \(-d<\lambda<d\), put
\(\tau=\sqrt{(d-\lambda)/(d+\lambda)}\) and let \(B\sim\cN(0,1)\). Then
\cref{lem:localmax} gives \(\Gamma\in\cE_d\) with
\[
 p_{\mathcal G}(\lambda)=\E F_\Gamma(\tau B),\qquad
 q_d(\lambda)=\E D_d(\tau B).
\]
At \(\lambda=\lambda_{\min}>-d\) we have \(\tau^2\ge1/(d-1)\), so the smoothed
comparison \eqref{eq:predecessor} already gives
\(p_{\mathcal G}(\lambda_{\min})\ge q_d(\lambda_{\min})\). At
\(\lambda_{\min}=-d\) both sides equal \(1/2\). \Cref{thm:main} gives
\(p_{\mathcal G}(\lambda)\ge q_d(\lambda)\) at every eigenvalue
(\cref{prop:graph-local}), but \(q_d\) is nonincreasing (by Slepian's
inequality, since \(\varrho_\lambda\) decreases in \(\lambda\)), so this does
not improve \cref{cor:graph}.

\section{Open problems}\label{sec:open}

We close with two problems. The first is a spherical strengthening of \cref{thm:main}, which remains open. The second asks for the optimal directions when they must lie in a space too small for a regular simplex.

Let \(\nu\) denote the uniform probability measure on the unit sphere \(S^{n-2}\subset\R^{n-1}\), and let \(W\) be a standard Gaussian vector in \(\R^{n-1}\); its direction \(W/|W|\) has law \(\nu\) and is independent of its length \(|W|\). If \(u_1,\ldots,u_n\in S^{n-2}\) have Gram matrix \(G\), then \(X_i=\ip{u_i}{W}\) defines \(X\sim\cN(0,G)\), and conditioning on \(|W|\) gives, for \(t>0\),
\begin{equation}\label{eq:caps}
 1-F_G(t)=\E\,\nu\Bigl\{\theta:\max_i\ip{u_i}{\theta}>\frac t{|W|}\Bigr\}.
\end{equation}
The set on the right is a union of \(n\) equal spherical caps centered at the \(u_i\). Thus \cref{thm:main} says that, on average over the cap radius, the union has the largest measure when the centers form a regular simplex. Harangi and Vir\'ag \cite[Conjecture~2.13]{HV2015}, and later Balitskiy, Karasev, and Tsigler \cite[Conjecture~2.2]{BKT2017}, conjectured that the regular simplex maximizes the measure of the union for every cap radius (\cref{prob:caps}). By \eqref{eq:caps}, an affirmative answer gives \eqref{eq:main} whenever \(\rank G\le n-1\), and hence for every \(G\) by \cref{lem:kernel-reduction} and \eqref{eq:slepian}.

\begin{problem}[{\cite[Conjecture~2.13]{HV2015}, \cite[Conjecture~2.2]{BKT2017}}]\label{prob:caps}
Let \(n\ge5\), and let \(v_1,\ldots,v_n\) be the vertices of a regular simplex inscribed in \(S^{n-2}\). Is it true that for every \(c\in(0,1)\) and all unit vectors \(u_1,\ldots,u_n\in\R^{n-1}\),
\[
 \nu\Bigl\{\theta:\max_i\ip{u_i}{\theta}>c\Bigr\} \le\nu\Bigl\{\theta:\max_i\ip{v_i}{\theta}>c\Bigr\}?
\]
\end{problem}

Only the range \(1/(n-1)<c<\sqrt{(n-2)/(2(n-1))}\) is in question. For \(c\le1/(n-1)\) the caps of the simplex cover the sphere up to finitely many points, since \(\{y:\max_i\ip{v_i}{y}\le1/(n-1)\}\) is the simplex with vertices \(-v_1,\ldots,-v_n\), which meets the sphere only at its vertices. For \(c\ge\sqrt{(n-2)/(2(n-1))}\) they are pairwise disjoint, since a point \(\theta\) in two of them would give \(2c<\ip{v_i+v_j}{\theta}\le|v_i+v_j|=\sqrt{2(n-2)/(n-1)}\). In the first case the simplex caps have full measure, and in the second their union has measure \(n\) times that of one cap, the most any \(n\) caps can have.

For \(n\le3\) the question is elementary (for \(n=3\) the caps are arcs of a circle, the case \(d=2\) discussed after \cref{prob:rank}). For \(n=4\) the answer is yes: a standard consequence of the moment theorem of Fejes T\'oth \cite[p.~27, (4)]{FejesToth1950} is that, for every radius, the union of \(f\) equal caps on the two-sphere has the largest area when \(f\in\{4,6,12\}\) and the centers are the vertices of the tetrahedron, the octahedron, or the icosahedron (see \cite[Section~2.2]{HV2015} and \cite[p.~2]{BKT2017}). As Balitskiy, Karasev, and Tsigler point out \cite[p.~3]{BKT2017}, the moment argument of Fejes T\'oth works only when no point lies in three caps of the regular configuration, while for \(n\ge5\) three caps of the regular simplex overlap when \(c\) is close to \(1/(n-1)\).

\Cref{thm:main} places no restriction on the rank of \(G\), and the extremal
matrix \(\Delta_n\) has rank \(n-1\): the regular simplex needs \(n-1\)
dimensions for \(n\) directions. When the directions must lie in \(\R^d\)
with \(d<n-1\), we do not know the optimal geometry. In our preceding paper we
posed this question for exponential moments \cite[Open problem~8.2]{Mulgund2026},
where by \eqref{eq:decoding} it asks for the best equal-energy code of \(n\)
signals in dimension \(d\). \Cref{prob:rank} asks it for the distribution
function.

\begin{problem}\label{prob:rank}
Fix integers \(d\ge3\) and \(n\ge d+2\), and fix \(t>0\). Determine
\[
 \min\{F_G(t):G\in\cE_n,\ \rank G\le d\},
\]
and characterize the minimizing matrices.\footnote{The author has partial
results on this problem and would be glad to hear from anyone interested in
collaborating on it.}
\end{problem}

Geometrically, \cref{prob:rank} asks which unit vectors
\(u_1,\ldots,u_n\in\R^d\) minimize the Gaussian measure of
\(\{x:\ip{u_i}{x}\le t\text{ for every }i\}\). The minimum is attained by
compactness, and by \cref{thm:main} it is strictly larger than \(D_n(t)\),
because \(\rank\Delta_n=n-1>d\). For \(n\le d+1\) the minimum is \(D_n(t)\), attained only at \(\Delta_n\), because \(\rank\Delta_n=n-1\le d\). For \(d\le2\) the problem is
elementary. When \(d=1\) the directions are \(\pm u\), and using both signs
gives \(2\Phi(t)-1\). When \(d=2\), conditioning on the radius of a standard
Gaussian vector reduces the question to covering as much of a circle as
possible by \(n\) arcs of equal length, and equally spaced centers do this for
every radius, so the regular \(n\)-gon is optimal. When \(d=3\) and
\(n\in\{6,12\}\), integrating the cap bound of Fejes T\'oth recalled after \cref{prob:caps} over the radius shows that unit vectors at the vertices of the octahedron and of the icosahedron are optimal. We do not know the optimal configurations in any other case.

\section*{Acknowledgments}

The author thanks Natasha Devroye, Gy\"orgy Tur\'an, and Milo\v{s} \v{Z}efran
for helpful conversations and feedback.

\appendix

\section{Equivalence with the circumscribed-simplex conjecture}
\label[appendix]{app:equivalence}

We first complete the proof of \cref{cor:geometric}, whose inequality was proved after its statement.

\begin{proof}[Proof of the equality case of \cref{cor:geometric}]
Write \(K=\{y:\ip{v_i}{y}\le b_i,\ 1\le i\le n\}\) with unit outer facet normals \(v_i\) and offsets \(b_i\ge t\), let \(G\) be the Gram matrix of the normals, and suppose \(\gamma_{n-1}(K)=\gamma_{n-1}(T_t)\). Then both inequalities in \eqref{eq:geometric-chain} are equalities: \(F_G(t)=D_n(t)\), and \(K\setminus\{y:\ip{v_j}{y}\le t\text{ for every }j\}\) is \(\gamma_{n-1}\)-null. \Cref{thm:main} gives \(G=\Delta_n\), so the normals form a regular simplex. If \(b_i>t\) for some \(i\), the point \((t+\varepsilon)v_i\) with small \(\varepsilon>0\) satisfies every inequality defining \(K\) strictly but violates \(\ip{v_i}{y}\le t\). A small ball about this point lies in that null set, a contradiction. Thus every \(b_i=t\). Finally, two spanning families of vectors with the same Gram matrix differ by an orthogonal map, since the correspondence between them extends linearly and preserves inner products. The converse is clear.
\end{proof}

The rest of this appendix proves \cref{prop:equivalence}: the non-strict
circumscribed-simplex statement implies \eqref{eq:main}. The reduction does
not use the proof of \cref{thm:main}. It follows the argument sketched in
\cite[p.~4]{BKT2017}: \cref{lem:kernel-reduction} lowers the correlations
until \(0\in\conv\{v_1,\ldots,v_n\}\), and \cref{lem:positive-kernel-approx}
approximates the resulting polyhedra by simplices.

\begin{proposition}\label{prop:equivalence}
Fix \(n\ge2\). The following are equivalent.
\begin{enumerate}[label=(\roman*),leftmargin=*]
 \item For every \(t>0\), the regular simplex circumscribed about
 \(B(0,t)\subset\R^{n-1}\) minimizes standard Gaussian measure among all
 simplices containing \(B(0,t)\).
 \item \(F_G(t)\ge D_n(t)\) for every \(G\in\cE_n\) and every \(t\in\R\).
\end{enumerate}
\end{proposition}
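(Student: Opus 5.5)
(ii)\(\Rightarrow\)(i) is already the inequality half of \cref{cor:geometric}: for a simplex \(K=\{y:\ip{v_i}{y}\le b_i\}\supseteq B(0,t)\) we have \(b_i\ge t\), and the chain \eqref{eq:geometric-chain} gives \(\gamma_{n-1}(K)\ge F_G(t)\ge D_n(t)=\gamma_{n-1}(T_t)\). So the substance is (i)\(\Rightarrow\)(ii), and I would follow the sketch of \cite[p.~4]{BKT2017} in three steps.

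\emph{Trivial thresholds and a Slepian reduction.} For \(t\le0\), (ii) holds because \(D_n(t)=0\) (\cref{lem:recurrence}); for \(n=2\) it is the elementary computation in the proof of \cref{thm:main}. So fix \(n\ge3\), \(t>0\), and \(G\in\cE_n\). I would first replace \(G\) by a matrix \(G'\le G\) entrywise off the diagonal, still in \(\cE_n\), whose realizing unit vectors \(v_1,\dots,v_n\) satisfy \(0\in\conv\{v_i\}\) and span \(\R^{n-1}\) (or sit inside it after a rotation). Slepian's inequality gives \(F_G(t)\ge F_{G'}(t)\), so it suffices to prove (ii) for \(G'\). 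The construction of \(G'\) is the content of \cref{lem:kernel-reduction}: move along the segment \(G_s=(1-s)G+s\Delta_n\)? That segment does not decrease all entries, so instead I would push along \(G-sP\) with renormalization, choosing \(P\) so that the path decreases every off-diagonal entry until \(G\) first becomes singular with a nonnegative kernel vector. Concretely, I would take the path \(G_s=(G-s J/n)/(1-s/n)\), which decreases every off-diagonal entry (since \(g_{ij}\le1\)) and remains positive semidefinite until \(\one^\top G_s\one\) hits zero or earlier singularity occurs. At the first singular parameter there is a kernel vector \(k\) with \(\sum k_iv_i=0\). Arguing that \(k\) can be chosen nonnegative, by a Perron–Frobenius type argument or by continuing to the stopping time when \(\one^\top G_s\one=0\), is the delicate point. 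Once \(k\ge0\), the rank is at most \(n-1\), giving \(0\in\conv\{v_i\}\) in \(\R^{n-1}\).

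\emph{Approximation by simplices.} With \(\sum k_iv_i=0\), \(k\ge0\), the polyhedron \(P=\{y:\ip{v_i}{y}\le t\}\subset\R^{n-1}\) contains \(B(0,t)\). It is a simplex exactly when \(k>0\) and the \(v_i\) span \(\R^{n-1}\). In general I would perturb the \(v_i\) to unit vectors \(v_i^\varepsilon\to v_i\) spanning \(\R^{n-1}\) with a strictly positive dependence \(\sum k^\varepsilon_iv^\varepsilon_i=0\); this is \cref{lem:positive-kernel-approx}. For example, mix toward the regular simplex: add \(\varepsilon\) times the simplex vertices' dependence and renormalize. Then \(P_\varepsilon=\{\ip{v_i^\varepsilon}{y}\le t\}\) is a simplex containing \(B(0,t)\), so (i) gives \(\gamma_{n-1}(P_\varepsilon)=F_{G_\varepsilon}(t)\ge D_n(t)\), and continuity (\cref{lem:compact}) yields \(F_{G'}(t)\ge D_n(t)\).

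\emph{Main obstacle.} I expect the main obstacle to be the first step: producing a Slepian-smaller \(G'\) with a \emph{nonnegative} kernel vector in rank at most \(n-1\). I would first try the \(J\)-subtraction path and check that its first singular point has a kernel vector in the nonnegative orthant. If that fails, I would use a Farkas/separation argument: if \(0\notin\conv\{v_i\}\), there is a direction \(y\) with all \(\ip{v_i}{y}>0\). Then replacing \(v_i\) by the renormalized \(v_i-sy\) lowers every correlation strictly to first order, and we continue until \(0\) enters the hull. Finally, for the dimension count I would add padding or projection so that the vectors live in \(\R^{n-1}\).
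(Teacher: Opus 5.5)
\textbf{Gap in the construction of \(\Gamma\).} Your outline matches the paper's. You get (ii)\(\Rightarrow\)(i) from \eqref{eq:geometric-chain}. For (i)\(\Rightarrow\)(ii) you pass via \eqref{eq:slepian} to some \(\Gamma\le G\) with a nonnegative kernel vector, approximate it by rank-\((n-1)\) correlation matrices with positive kernel vectors (whose polyhedra are simplices containing \(B(0,t)\)), and let \(\varepsilon\to0\). Had you simply invoked \cref{lem:kernel-reduction,lem:positive-kernel-approx}, that would be the whole proof. But you leave the construction of \(\Gamma\) open, and both candidates you offer fail.

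\emph{The \(J\)-path.} For positive definite \(G\), the path \(G_s=(G-sJ/n)/(1-s/n)\) stays positive semidefinite only while \(s/n\le(\one^\top G^{-1}\one)^{-1}\). At that point its kernel is spanned by \(G^{-1}\one\), which can have mixed signs. For example, take \(g_{12}=g_{13}=0.9\) and \(g_{23}=0.7\). Then \(G\) is positive definite with \(G^{-1}\one=1.25\,(-1,1,1)\). The path stops at off-diagonal entries \(\tfrac12,\tfrac12,-\tfrac12\), where \(v_1=v_2+v_3\), so \(0\notin\conv\{v_i\}\). There \(\one^\top G_s\one=4\), so you cannot continue to \(\one^\top G_s\one=0\) either.

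\emph{The Farkas route.} A first-order decrease at \(s=0\) says nothing about the end of the path. For an arbitrary separating \(y\) the conclusion is in fact false. First, the ray \(\{sy\}\) may miss \(\conv\{v_i\}\) altogether. Second, even when it enters at a point \(q\), correlations can go up. In \(\R^2\) take \(v_1=(0.1,\sqrt{0.99})\), \(v_2=(0.12,\sqrt{0.9856})\), \(v_3=(0.9,-\sqrt{0.19})\) and \(y=(1,0)\). The ray enters the hull at \(q\approx(0.656,0)\). The directions of \(v_1-q\) and \(v_2-q\) have correlation about \(0.99989\), larger than \(\ip{v_1}{v_2}\approx0.99980\), so Slepian's inequality points the wrong way.

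\textbf{The missing idea.} Use the point \(p\) of \(\conv\{v_i\}\) nearest the origin, assuming \(G\ne J\). The case \(G=J\) follows from \eqref{eq:slepian} because \(\Delta_n\le J\). Put \(c=|p|\in(0,1)\), \(u=p/c\) and \(\zeta_j=\ip{v_j}{u}\). The nearest-point condition gives \(\zeta_j\ge c\), so every convex representation \(p=\sum_ia_iv_i\) uses only indices in \(\mathcal I=\{i:\zeta_i=c\}\).

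The paper moves only these vectors along their meridians, \(v_i(s)=\sqrt{1-s^2}\,w_i+su\) with \(w_i=(v_i-cu)/\sqrt{1-c^2}\), from \(s=c\) down to \(s=0\), and keeps the others fixed. Then \(\sum_{i\in\mathcal I}a_iw_i=0\), so \(0\) lies in the new hull. Every inner product is nondecreasing in \(s\):
\begin{itemize}
\item for \(i,k\in\mathcal I\) the derivative is \(2s(1-\ip{w_i}{w_k})\ge0\);
\item for \(i\in\mathcal I\), \(j\notin\mathcal I\) it is \(\zeta_j-s\,d_{ij}/\sqrt{1-s^2}\ge0\), since \(d_{ij}=\ip{w_i}{v_j}\le\sqrt{1-\zeta_j^2}\) and \(s\le c\le\zeta_j\).
\end{itemize}

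Your central projection does work if you take \(y=u\) and stop at \(s=c\), but it then needs a global argument. For instance, \(v\mapsto(v-cu)/|v-cu|\) is a distance-nondecreasing bijection of the cap \(\{\ip{v}{u}\ge c\}\) onto the closed hemisphere \(\{\ip{e}{u}\ge0\}\). Along the path itself correlations need not decrease. For \(v_1=(\tfrac12,\tfrac{\sqrt3}2)\), \(v_2=(0.6,0.8)\), \(v_3=(\tfrac12,-\tfrac{\sqrt3}2)\), the projected correlation of \(v_1,v_2\) increases as \(s\) approaches \(\tfrac12\).

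\textbf{The approximation step.} ``Mix toward the regular simplex'' is too vague here. The Gram mixture \((1-\varepsilon)\Gamma+\varepsilon\Delta_n\) generally has rank \(n\), so it has no realization in \(\R^{n-1}\). \Cref{lem:positive-kernel-approx} instead takes \(P_\varepsilon\Gamma P_\varepsilon+\varepsilon P_\varepsilon\), where \(P_\varepsilon\) is the orthogonal projection onto the complement of the positive vector \((1-\varepsilon)a+\varepsilon\one/n\), and then rescales to unit diagonal.
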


We use Slepian's inequality in the following common-threshold form:
\begin{equation}\label{eq:slepian}
 \Gamma,G\in\cE_n,\quad \Gamma_{ij}\le g_{ij}\ (i\ne j)
 \quad\Longrightarrow\quad F_\Gamma(t)\le F_G(t)\quad(t\in\R).
\end{equation}
Slepian \cite[Section~1.3, p.~468, Lemma~1]{Slepian1962} states it at \(t=0\).
At every threshold it follows from Plackett's covariance derivative
\cite{Plackett1954}. For positive definite \(\Gamma,G\) the segment
\(C_s=(1-s)\Gamma+sG\) is positive definite, and
\cite[pp.~352--353, (4)--(6)]{Plackett1954} gives, with \(X\sim\cN(0,C_s)\),
\[
 \frac{d}{ds}F_{C_s}(t)=\sum_{i<j}(g_{ij}-\Gamma_{ij})\,
 \varphi_2\bigl(t,t;(C_s)_{ij}\bigr)\,
 \PP\{X_\ell\le t\ (\ell\ne i,j)\mid X_i=X_j=t\}\ge0.
\]
Integrating over \(s\in[0,1]\) proves \eqref{eq:slepian}. For positive
semidefinite \(\Gamma,G\), apply this to \((1-\varepsilon)\Gamma+\varepsilon I\) and
\((1-\varepsilon)G+\varepsilon I\), whose off-diagonal entries keep their
order, and let \(\varepsilon\downarrow0\) using \cref{lem:compact}.

\begin{lemma}\label{lem:kernel-reduction}
If \(G\in\cE_n\) and \(G\ne J\), there are \(\Gamma\in\cE_n\) and
\(a\in[0,\infty)^n\) with \(\sum_ia_i=1\) such that
\[
 \Gamma_{ij}\le g_{ij}\quad\text{for every }i,j,\qquad \Gamma a=0.
\]
\end{lemma}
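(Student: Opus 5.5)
The plan is to realize \(G\) by unit vectors, find a point of their convex hull, and then push the vectors away from that point and renormalize, so that the correlations go down and the hull comes to contain the origin. Let \(v_1,\ldots,v_n\) be unit vectors with Gram matrix \(G\). If \(0\in\conv\{v_1,\ldots,v_n\}\), say \(\sum_ia_iv_i=0\) with \(a\) a probability vector, then \(Ga=0\) because \((Ga)_j=\ip{v_j}{\sum_ia_iv_i}\), and we take \(\Gamma=G\).

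Otherwise let \(p=\sum_ia_iv_i\) be the point of \(\conv\{v_i\}\) nearest the origin, with \(a\) a probability vector. Put \(c=|p|>0\) and \(e=p/c\). The optimality condition for this projection gives \(\ip{v_i}{p}\ge c^2\), that is, \(\ip{v_i}{e}\ge c\) for every \(i\). Also \(c<1\): if \(c=1\), every \(v_i\) would equal \(e\) and \(G=J\), which is excluded.

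For the construction, write \(v_i=(c+\alpha_i)e+y_i\) with \(\alpha_i\ge0\) and \(y_i\perp e\), and set
\[
 \tilde v_i=v_i-ce=\alpha_ie+y_i,\qquad w_i=\frac{\tilde v_i}{|\tilde v_i|}.
\]
Each \(\tilde v_i\) is nonzero, since \(|\tilde v_i|\ge|v_i|-c=1-c>0\). Moreover \(\sum_ia_i\tilde v_i=p-ce=0\). Let \(\Gamma\) be the Gram matrix of the \(w_i\). Then \(\Gamma\in\cE_n\), and the vector \(a'_i=a_i|\tilde v_i|\), normalized to sum one, is a nonnegative kernel vector of \(\Gamma\).

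It remains to check that \(\Gamma_{ij}\le g_{ij}\). This is the step I expect to be the main obstacle. Explicitly we need
\[
 \frac{\alpha_i\alpha_j+\ip{y_i}{y_j}}{\sqrt{\alpha_i^2+|y_i|^2}\sqrt{\alpha_j^2+|y_j|^2}}
 \le(c+\alpha_i)(c+\alpha_j)+\ip{y_i}{y_j},
\]
where \(|y_i|^2=1-(c+\alpha_i)^2\). I would treat this as a monotonicity statement along the path \(s\mapsto(v_i-se)/|v_i-se|\) for \(s\in[0,c]\), and show that the inner product of the \(i\)-th and \(j\)-th normalized vectors is nonincreasing in \(s\). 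The computation reduces to a two- or three-dimensional problem: differentiate the cosine in \(s\) and show the derivative is \(\le0\) whenever both \(e\)-components stay nonnegative. The conditions \(\alpha_i\ge0\) and \(s\le c\) keep the \(e\)-components \(c+\alpha_i-s\) nonnegative along the whole path.

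If this monotonicity turns out to fail in some configuration, I would fall back on an alternative. That alternative uses the positive semidefinite path \((G-sJ)/(1-s)\), whose off-diagonal entries \((g_{ij}-s)/(1-s)\) are at most \(g_{ij}\) because \(g_{ij}\le1\). I would combine this path with a restriction to the face of \(\conv\{v_i\}\) containing \(p\), aiming to produce a kernel vector supported on that face.
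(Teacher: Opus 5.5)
\textbf{There is a genuine gap.} Your construction is sound as far as it goes: \(\Gamma\in\cE_n\), and \(a_i|\tilde v_i|\), normalized, is a nonnegative kernel vector. But the entrywise bound \(\Gamma_{ij}\le g_{ij}\) is the whole content of the lemma, and you leave it unproved. Worse, the route you propose for it is false.

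The cosine along \(s\mapsto(v_i-se)/|v_i-se|\) need not be monotone on \([0,c]\), even when both \(e\)-components stay positive. Take \(\R^2\) with \(e=\varepsilon_1\). Let \(v_1,v_2\) make angles \(40^\circ,50^\circ\) with \(e\), and let \(v_3,v_4\) be their reflections in the \(e\)-axis. Then \(p=(\cos50^\circ,0)\) and \(c\approx0.643\). The cosine of the pair \((1,2)\) falls from \(\cos10^\circ\approx0.985\) at \(s=0\) to about \(0.978\) near \(s\approx0.42\), and then rises to about \(0.982\) at \(s=c\). So its \(s\)-derivative is positive on roughly \((0.42,c)\), and no sign computation of the kind you describe can succeed; only the endpoint comparison survives.

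The fallback does not rescue this. The matrix \((G-sJ)/(1-s)\) is positive semidefinite only for \(s\le\operatorname{dist}(0,\operatorname{aff}\{v_i\})^2\), which is \(0\) in the example above. Restricted to the face \(I=\{i:\ip{v_i}{e}=c\}\), it stays positive semidefinite up to \(s=c^2\), where it is the Gram matrix of \((v_i-ce)/\sqrt{1-c^2}\). But then all the difficulty lies in the entries between \(I\) and its complement, which you do not address.

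\textbf{How to repair it.} Your \(\Gamma\) can be saved by an endpoint argument. With \(a=\ip{v_i}{e}\) and \(b=\ip{v_j}{e}\), the bound \(\Gamma_{ij}\le g_{ij}\) reads \(c(a+b)-c^2-g_{ij}(1-|\tilde v_i||\tilde v_j|)\ge0\). Since \(|\tilde v_i|^2=1-2ca+c^2\le1-c^2\), the left side is nonincreasing in \(g_{ij}\) at fixed \(a,b\). So it suffices to treat the coplanar configuration \(v=\cos\theta\,e+\sin\theta\,y\), with a common unit \(y\perp e\), which maximizes \(g_{ij}\). There the angle \(\phi(\theta)\in[0,\pi/2]\) of \(v-ce\) from \(e\) satisfies \(\phi'(\theta)-1=c(\cos\theta-c)/|v-ce|^2\ge0\) whenever \(\cos\theta\ge c\). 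Hence the angle between \(\tilde v_i\) and \(\tilde v_j\) is at least the angle between \(v_i\) and \(v_j\). The monotonicity you need is in \(\theta\), not in \(s\).

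The paper sidesteps the issue differently. It moves only the face vectors, \(v_i(s)=\sqrt{1-s^2}\,w_i+su\) with \(w_i=(v_i-cu)/\sqrt{1-c^2}\) for \(i\in I\), and keeps the other vectors fixed. Along that path monotonicity does hold. The derivatives are \(2s(1-\ip{w_i}{w_k})\) within \(I\), and \(\zeta_j-s\ip{w_i}{v_j}/\sqrt{1-s^2}\ge0\) across, using \(\ip{w_i}{v_j}\le\sqrt{1-\zeta_j^2}\) and \(s\le c\le\zeta_j\).
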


\begin{proof}
Realize \(G\) by unit vectors \(v_i\), and let \(p\) be the point of their
convex hull closest to the origin. If \(p=0\), choose convex coefficients
representing it and take \(\Gamma=G\). Otherwise set \(c=|p|>0\), \(u=p/c\), and
\(\zeta_j=\ip{v_j}{u}\). The nearest-point condition \(\ip{p}{v_j-p}\ge0\)
gives \(\zeta_j\ge c\). If \(c=1\), every \(v_j\) equals \(u\), contradicting
\(G\ne J\); so \(c<1\).

Let \(\mathcal I=\{i:\zeta_i=c\}\). Any convex representation
\(p=\sum_ia_iv_i\) uses only indices in \(\mathcal I\), since
\(c=\sum_ia_i\zeta_i\) and every \(\zeta_i\ge c\). For \(i\in\mathcal I\) put
\[
 w_i=\frac{v_i-cu}{\sqrt{1-c^2}},\qquad
 v_i(s)=\sqrt{1-s^2}\,w_i+su,\qquad 0\le s\le c,
\]
and keep \(v_j(s)=v_j\) for \(j\notin\mathcal I\). The \(w_i\) are unit
vectors orthogonal to \(u\), \(v_i(c)=v_i\), and
\(\sum_{i\in\mathcal I}a_iw_i=0\). For \(i,k\in\mathcal I\),
\[
 \frac{d}{ds}\ip{v_i(s)}{v_k(s)}=2s\bigl(1-\ip{w_i}{w_k}\bigr)\ge0.
\]
For \(i\in\mathcal I\) and \(j\notin\mathcal I\), let
\(d_{ij}=\ip{w_i}{v_j}\). Since \(w_i\perp u\),
\(d_{ij}=\ip{w_i}{v_j-\zeta_ju}\le|v_j-\zeta_ju|=\sqrt{1-\zeta_j^2}\), and
\[
 \frac{d}{ds}\ip{v_i(s)}{v_j}
 =\zeta_j-\frac{s\,d_{ij}}{\sqrt{1-s^2}}
 \ge\zeta_j-\frac{s\sqrt{1-\zeta_j^2}}{\sqrt{1-s^2}}\ge0,
\]
where the last inequality follows by squaring and using \(s\le c\le\zeta_j\).
Thus moving \(s\) from \(c\) down to \(0\) decreases every affected inner
product. The Gram matrix \(\Gamma\) of the vectors \(v_i(0)\) satisfies \(\Gamma\le G\)
entrywise, and \(\Gamma a=0\) because
\(\sum_{i\in\mathcal I}a_iv_i(0)=\sum_{i\in\mathcal I}a_iw_i=0\).
\end{proof}

The second lemma approximates such a \(\Gamma\) by matrices whose normals form
simplices.

\begin{lemma}\label{lem:positive-kernel-approx}
If \(\Gamma\in\cE_n\), \(\Gamma a=0\), \(a\ge0\), and \(\sum_ia_i=1\), then \(\Gamma\) is a
limit of rank-\((n-1)\) correlation matrices whose kernels are spanned by
vectors with strictly positive entries.
\end{lemma}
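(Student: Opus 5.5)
We realize \(\Gamma\) geometrically and perturb the realization. Let \(r=\rank\Gamma\) and write \(\Gamma\) as the Gram matrix of unit vectors \(v_1,\ldots,v_n\in\R^{n-1}\). This is possible since \(r\le n-1\): \(\Gamma a=0\) with \(a\ne0\). The relation \(\Gamma a=0\) says \(\sum_ia_iv_i=0\). The goal is to produce, for each small \(\varepsilon>0\), unit vectors \(v_i^\varepsilon\in\R^{n-1}\) with \(v_i^\varepsilon\to v_i\), spanning \(\R^{n-1}\), and satisfying \(\sum_ia_i^\varepsilon v_i^\varepsilon=0\) for some \(a^\varepsilon\) with strictly positive entries. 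Their Gram matrices \(\Gamma^\varepsilon\) are then correlation matrices of rank exactly \(n-1\), and they converge to \(\Gamma\). Each kernel is one-dimensional, so it is spanned by \(a^\varepsilon\).

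\emph{Step 1: make the weights positive.} Let \(u\) be any vector in the span of the \(v_i\), and define \(v_i'=v_i+\delta(u_i-\bar u)\)-type corrections. The simplest choice is to move the zero-weight vectors slightly and renormalize. Concretely, let \(P=\{i:a_i>0\}\), which is nonempty. Set \(a^\varepsilon=(1-\varepsilon)a+\varepsilon\one/n\). We then need vectors with \(\sum_ia_i^\varepsilon v_i^\varepsilon=0\). Put \(w=\sum_i v_i/n\), so that \(\sum_ia_i^\varepsilon v_i=\varepsilon(w-\sum_i a_i v_i)=\varepsilon w\). This residual is \(O(\varepsilon)\), and we correct it by rescaling.

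\emph{Step 2: enforce the relation while keeping unit length.} Write \(v_i^\varepsilon=(v_i-c_\varepsilon+\varepsilon y_i)/|v_i-c_\varepsilon+\varepsilon y_i|\), with a common shift \(c_\varepsilon\) and directions \(y_i\). The cleaner route is to absorb the normalization into the weights. For any vectors \(x_i\ne0\) we have
\[
\sum_ia_i^\varepsilon x_i=0\iff\sum_i\bigl(a_i^\varepsilon|x_i|\bigr)\frac{x_i}{|x_i|}=0.
\]
So it suffices to find nonzero \(x_i\to v_i\) with \(\sum_ia_i^\varepsilon x_i=0\) and then normalize. The normalized weights \(a_i^\varepsilon|x_i|\) are still strictly positive. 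Taking \(x_i=v_i-\varepsilon w\) works, since \(\sum_ia_i^\varepsilon x_i=\varepsilon w-\varepsilon w\sum_ia_i^\varepsilon=0\), and \(x_i\ne0\) for small \(\varepsilon\).

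\emph{Step 3: full rank.} The vectors \(x_i\) still lie in the \(r\)-dimensional span of the \(v_i\), so the rank may be below \(n-1\). This is the main obstacle: we must raise the rank to \(n-1\) without destroying the positive relation. Choose \(n-1-r\) orthonormal directions \(e_1,\ldots\) orthogonal to the span. Then add \(\varepsilon b_i\) to \(x_i\), where the vectors \(b_i\in\operatorname{span}\{e_k\}\) satisfy \(\sum_ia_i^\varepsilon b_i=0\) and the \(x_i+\varepsilon b_i\) span \(\R^{n-1}\). Such \(b_i\) exist generically. The \(b\)-configurations satisfying the linear constraint form a subspace, and inside it the spanning condition holds off a proper algebraic subset, provided at least one configuration spans. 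To see that one does, first pick \(n-1\) indices whose \(x_i\) have independent projections onto the original span, extended by \(n-1-r\) further indices, and assign them the \(e_k\). Then choose \(b\) on a single remaining index to balance the constraint. A remaining index exists because there are \(n\) vectors but only \(n-1\) are needed.

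Once spanning holds, the Gram rank is \(n-1\), and the kernel is exactly \(\operatorname{span}(a^\varepsilon|x_i+\varepsilon b_i|)_i\), which is positive. Letting \(\varepsilon\downarrow0\) gives convergence to \(\Gamma\).
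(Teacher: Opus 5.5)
Your proof is correct, and it has the same skeleton as the paper's. Both use the weights \(a_\varepsilon=(1-\varepsilon)a+\frac\varepsilon n\one\), a correction that puts \(a_\varepsilon\) in the kernel, an orthogonal perturbation that raises the rank to \(n-1\) while keeping that kernel vector, and a final normalization to unit diagonal.

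The difference is in execution. The paper never realizes \(\Gamma\) by vectors. It sets \(\Sigma_\varepsilon=P_\varepsilon\Gamma P_\varepsilon+\varepsilon P_\varepsilon\), where \(P_\varepsilon\) is the orthogonal projection away from \(a_\varepsilon\). The single identity \(x^\top\Sigma_\varepsilon x=(P_\varepsilon x)^\top\Gamma(P_\varepsilon x)+\varepsilon|P_\varepsilon x|^2\) then gives positive semidefiniteness, rank \(n-1\), and kernel \(\operatorname{span}\{a_\varepsilon\}\) at once. Conjugating by \(T_\varepsilon\) rescales the kernel vector exactly as your normalization \(a^\varepsilon_i\mapsto a^\varepsilon_i|x_i+\varepsilon b_i|\) does.

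In your vector language, \(P_\varepsilon\Gamma P_\varepsilon\) is the Gram matrix of \(v_i-\frac{a_{\varepsilon,i}}{|a_\varepsilon|^2}\,\varepsilon w\). That is a shift proportional to \(a_{\varepsilon,i}\), where yours is the uniform shift \(\varepsilon w\), which works because \(\sum_ia^\varepsilon_i=1\). The term \(\varepsilon P_\varepsilon\) is the Gram matrix of the columns of \(\sqrt\varepsilon P_\varepsilon\), placed in a fresh orthogonal copy of \(\R^n\); their only linear relation is \(a_\varepsilon\). This is why the paper needs no index selection or genericity. Because you stay inside \(\R^{n-1}\), you must build the rank-filling \(b_i\) inside \(V^\perp\) by hand. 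Your version is more geometric and matches how the lemma is used (simplex normals with the origin inside), at the cost of that bookkeeping.

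Three points in Step 3 should be tightened.
\begin{enumerate}
\item You mean \(r\) indices with independent \(x_i\), not \(n-1\). This needs the \(x_i\) to span \(V\). That holds because \((x_1,\ldots,x_n)=(v_1,\ldots,v_n)(I-\frac\varepsilon nJ)\), and \(I-\frac\varepsilon nJ\) is invertible for \(\varepsilon<1\).
\item Let \(j_1,\ldots,j_{n-1-r}\) be the indices assigned \(e_1,\ldots,e_{n-1-r}\). Your explicit configuration has \(b_{i_0}=-\frac1{a^\varepsilon_{i_0}}\sum_k a^\varepsilon_{j_k}e_k\). When \(a_{i_0}=0\) and some \(a_{j_k}>0\), this is of order \(1/\varepsilon\), so \(\varepsilon b_{i_0}\) need not tend to \(0\) and convergence fails. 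You must say that \(b\) can be taken bounded. This is immediate: since \(x_i\in V\) and \(b_i\in V^\perp\), both the linear constraint and the spanning condition are invariant under \(b\mapsto sb\) for \(s\ne0\). So you can rescale the explicit configuration to norm at most one and drop the genericity argument altogether.
\item Delete the abandoned false starts at the beginning of Steps 1 and 2.
\end{enumerate}
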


\begin{proof}
For \(0<\varepsilon<1\) let
\[
 a_\varepsilon=(1-\varepsilon)a+\frac\varepsilon n\one,\qquad
 P_\varepsilon=I-\frac{a_\varepsilon a_\varepsilon^\top}{|a_\varepsilon|^2},
 \qquad
 \Sigma_\varepsilon=P_\varepsilon\Gamma P_\varepsilon+\varepsilon P_\varepsilon.
\]
The identity
\(x^\top \Sigma_\varepsilon x=(P_\varepsilon x)^\top \Gamma(P_\varepsilon x)
+\varepsilon|P_\varepsilon x|^2\) shows that \(\Sigma_\varepsilon\succeq0\) and
\(\ker \Sigma_\varepsilon=\operatorname{span}\{a_\varepsilon\}\). Each diagonal
entry is positive, \((\Sigma_\varepsilon)_{ii}\ge\varepsilon(P_\varepsilon)_{ii}>0\),
because \(a_\varepsilon\) has at least two positive entries. Set
\[
 T_\varepsilon=\diag\bigl((\Sigma_\varepsilon)_{ii}^{-1/2}\bigr),\qquad
 \Gamma_\varepsilon=T_\varepsilon \Sigma_\varepsilon T_\varepsilon.
\]
Then \(\Gamma_\varepsilon\in\cE_n\) has rank \(n-1\) and kernel spanned by
\(T_\varepsilon^{-1}a_\varepsilon\), which has positive entries. Since
\(\Gamma a=0\), the projection \(P_0=I-aa^\top/|a|^2\) satisfies \(P_0\Gamma P_0=\Gamma\), so
\(\Sigma_\varepsilon\to \Gamma\), \(T_\varepsilon\to I\), and \(\Gamma_\varepsilon\to \Gamma\).
\end{proof}

\begin{proof}[Proof of \cref{prop:equivalence}]
The implication (ii)\(\Rightarrow\)(i) is the non-strict part of the proof of
\cref{cor:geometric}. For the converse, assume (i) and take \(G\in\cE_n\). If
\(G=J\), then \(\Delta_n\le J\) entrywise and \eqref{eq:slepian} applies.
Otherwise \cref{lem:kernel-reduction,lem:positive-kernel-approx} give
\(\Gamma\le G\) entrywise and \(\Gamma_\varepsilon\to \Gamma\), each \(\Gamma_\varepsilon\) of rank
\(n-1\) with a kernel vector of positive entries.

Realize \(\Gamma_\varepsilon\) by unit vectors \(v_1,\ldots,v_n\) spanning
\(\R^{n-1}\). Their only linear dependence has positive coefficients, so they
are affinely independent (a dependence with coefficients summing to zero must
vanish), and their convex hull is a simplex with the origin in its interior.
For \(t>0\), the set \(\{y:\ip{v_i}{y}\le t,\ 1\le i\le n\}\) is \(t\) times
the polar of that simplex, so it is a bounded simplex. Since \(|v_i|=1\),
every \(y\) with \(|y|\le t\) satisfies \(\ip{v_i}{y}\le t\), so this simplex
contains \(B(0,t)\), and (i) gives
\[
 F_{\Gamma_\varepsilon}(t)=\gamma_{n-1}\{y:\ip{v_i}{y}\le t\text{ for every }i\}
 \ge D_n(t).
\]
Letting \(\varepsilon\downarrow0\) by \cref{lem:compact} and then applying
\eqref{eq:slepian} gives \(F_G(t)\ge F_\Gamma(t)\ge D_n(t)\). For \(t\le0\) the
right side vanishes.
\end{proof}

Strict inequalities do not pass through the approximation in
\cref{lem:positive-kernel-approx}. This is why the equality statement
\eqref{eq:main-equality} is proved separately, in \cref{sec:mass}.

\section{Further identification criteria}\label[appendix]{app:identification}

We keep the notation of \cref{sec:identification}.

\subsection{Adversarial amplitudes}

When the amplitude law is not known, we ask for rules that succeed against
every law in a set \(\mathcal A\) and every label.

\begin{corollary}\label{cor:robust-amplitude}
Let \(n\ge2\), \(m\ge n-1\), and \(0<\alpha\le1\). Let \(\mathcal A\) be a set
of probability laws on \([0,\infty)\) containing a law \(\mu_0\) with
\(\mu_0\le_{\mathrm{st}}\mu\) for every \(\mu\in\mathcal A\). If
\(\mu_0((0,\infty))>0\), then
\begin{equation}\label{eq:id-robust}
 \sup_{u_1,\ldots,u_n\in S^{m-1}}\
 \sup_{\delta:\operatorname{FA}(\delta)\le\alpha}\
 \inf_{\mu\in\mathcal A}\ \min_{1\le i\le n}\PP_{i,\mu}\{\delta=i\}
 =C_{\Delta_n}(\mu_0,\alpha).
\end{equation}
The rule of \cref{thm:identification} for the simplex attains the value, and
\(\Delta_n\) is the only maximizing Gram matrix. In particular, if an
adversary chooses a label and an amplitude \(\lambda\ge\lambda_0>0\), the value is
\(C_{\Delta_n}(\lambda_0,\alpha)\).
\end{corollary}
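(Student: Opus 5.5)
The plan is a sandwich: an upper bound from restricting the adversary to \(\mu_0\), and a lower bound from showing the simplex rule's success is monotone in the amplitude.

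\textbf{Upper bound.} For any code and any rule \(\delta\) with \(\operatorname{FA}(\delta)\le\alpha\), restricting the infimum to \(\mu_0\in\mathcal A\) gives
\[
 \inf_{\mu\in\mathcal A}\min_i\PP_{i,\mu}\{\delta=i\}\le\min_i\PP_{i,\mu_0}\{\delta=i\}.
\]
By \eqref{eq:id-label-minimax}, the supremum over codes and rules of the right side is \(C_{\Delta_n}(\mu_0,\alpha)\). Uniqueness of the Gram matrix also comes from this step. Fix \(G\ne\Delta_n\). The right side is at most the average \(\frac1n\sum_i\PP_{i,\mu_0}\{\delta=i\}\le C_G(\mu_0,\alpha)\), and \(C_G(\mu_0,\alpha)<C_{\Delta_n}(\mu_0,\alpha)\) by the strict comparison in \eqref{eq:id-frontier}. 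Since \(\mu_0((0,\infty))>0\), \cref{thm:identification} applies to \(\mu_0\).

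\textbf{Lower bound.} Take the regular simplex and the rule \(\delta^*\) of \cref{thm:identification}. This rule does not depend on the amplitude law, and by symmetry it succeeds equally under every label. For a point mass at \(\lambda\), its success probability under label \(i\) is
\[
 s(\lambda)=\PP\Bigl\{\arg\max_j\ip{u_j}{\lambda u_i+Z}=i,\ \max_j\ip{u_j}{\lambda u_i+Z}>t_n(\alpha)\Bigr\}.
\]
I would show that \(s\) is nondecreasing in \(\lambda\ge0\). The event is \(\{\lambda u_i+Z\in A_i\}\), where \(A_i\) is the set of \(y\) whose largest projection is on \(u_i\) and exceeds \(t_n(\alpha)\).

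The key observation is that \(A_i\) is closed under translation by \(cu_i\) for \(c\ge0\). Indeed, adding \(cu_i\) raises \(\ip{u_i}{y}\) by \(c\), and raises each \(\ip{u_j}{y}\) for \(j\ne i\) by only \(c\,g_{ij}=-c/(n-1)\le c\), so \(u_i\) stays the maximizer and the maximum only grows. Hence \(A_i+cu_i\subset A_i\). Writing \(\lambda'=\lambda+c\), the events \(\{\lambda u_i+Z\in A_i\}\) increase in \(\lambda\) pathwise, with the same \(Z\).

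Given this monotonicity, \(\mu_0\le_{\mathrm{st}}\mu\) yields
\[
 \PP_{i,\mu}\{\delta^*=i\}=\int s\,d\mu\ \ge\ \int s\,d\mu_0=C_{\Delta_n}(\mu_0,\alpha),
\]
where the final equality is \eqref{eq:id-linearity} combined with label symmetry. So the simplex rule attains the value for every \(\mu\in\mathcal A\), and the two bounds meet.

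\textbf{The adversarial special case.} Let \(\mathcal A\) be the set of point masses at \(\lambda\ge\lambda_0\), and take \(\mu_0=\delta_{\lambda_0}\). Then \(\mu_0\le_{\mathrm{st}}\mu\) for every \(\mu\in\mathcal A\), and the general statement gives the value \(C_{\Delta_n}(\lambda_0,\alpha)\).

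\textbf{Main obstacle.} The hard step is the monotonicity of \(s\). It rests on the set inclusion \(A_i+cu_i\subset A_i\), which in turn needs \(g_{ij}\le1\) together with the fact that ties occur with probability zero. If that inclusion argument turned out to be delicate at ties, I would instead use \(A_i\) with \(\ge\) replaced by strict inequalities, which changes nothing up to null sets.
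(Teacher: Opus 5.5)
Your proposal is correct and follows the paper's proof essentially step for step. The upper bound comes from specializing the adversary to \(\mu_0\) and bounding the minimum over labels by the average \(C_G(\mu_0,\alpha)\), which is strictly below \(C_{\Delta_n}(\mu_0,\alpha)\) when \(G\ne\Delta_n\). Attainment comes from the pathwise monotonicity in \(\lambda\) of the simplex rule's correct-decision event: your inclusion \(A_i+cu_i\subset A_i\) is the paper's observation that the constraints \(\ip{u_i-u_j}{Z}>-\lambda(1-g_{ij})\) and \(\ip{u_i}{Z}>t_n(\alpha)-\lambda\) loosen as \(\lambda\) grows, since \(1-g_{ij}>0\). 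Label symmetry and \(\mu_0\le_{\mathrm{st}}\mu\) then finish the argument; at \(\alpha=1\) you simply drop the threshold condition from \(A_i\).
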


\begin{proof}
For the upper bound, fix unit vectors \(u_1,\ldots,u_n\) with Gram matrix
\(G\), and take \(\mu=\mu_0\). For every rule \(\delta\) with
\(\operatorname{FA}(\delta)\le\alpha\),
\[
 \min_i\PP_{i,\mu_0}\{\delta=i\}\le\frac1n\sum_i\PP_{i,\mu_0}\{\delta=i\}
 \le C_G(\mu_0,\alpha)\le C_{\Delta_n}(\mu_0,\alpha),
\]
and the last inequality is strict for \(G\ne\Delta_n\) by
\cref{thm:identification}.

For attainment, use the simplex rule of \cref{thm:identification}.
Conditional on the amplitude \(\lambda\), its correct-decision event under label
\(i\) is, up to null sets,
\begin{equation}\label{eq:id-nested-events}
 \bigl\{\ip{u_i}{Z}>t_n(\alpha)-\lambda\bigr\}\cap\bigcap_{j\ne i}
 \bigl\{\ip{u_i-u_j}{Z}>-\lambda(1-g_{ij})\bigr\};
\end{equation}
at \(\alpha=1\) omit the first event. These events increase with \(\lambda\) because
\(1-g_{ij}>0\) for simplex directions, and by symmetry their probabilities are
the same for every label. Since \(\mu_0\le_{\mathrm{st}}\mu\), every label
succeeds with probability at least \(C_{\Delta_n}(\mu_0,\alpha)\) for every
\(\mu\in\mathcal A\).
\end{proof}

The hypothesis \(\mu_0((0,\infty))>0\) cannot be dropped. If \(\mathcal A\) contains deterministic amplitudes approaching zero, every rule has smallest success at most \(\alpha/n\).
Indeed, for fixed \(i\),
\[
 \bigl|\PP_{i,\lambda}\{\delta=i\}-\PP_0\{\delta=i\}\bigr|
 \le\E_0\bigl|e^{\lambda\ip{u_i}{Z}-\lambda^2/2}-1\bigr|\le\sqrt{e^{\lambda^2}-1}
 \longrightarrow0,
\]
and \(\min_i\PP_0\{\delta=i\}\le\alpha/n\), since
\(\sum_i\PP_0\{\delta=i\}\le\alpha\). Outputting a uniformly random label with probability
\(\alpha\), and \(0\) otherwise, attains this value for every geometry.

\subsection{Bayes accuracy}

Suppose inactivity has prior probability \(\pi_0\in[0,1)\) and each active
label has prior \(\pi=(1-\pi_0)/n\). A rule with output probabilities
\(r_0,\ldots,r_n\) is correct with probability
\[
 \pi_0\E_0r_0(Y)+\pi\sum_i\E_0\bigl[\ell_\mu(\ip{u_i}{Y})\,r_i(Y)\bigr]
 \le\E_0\max\Bigl\{\pi_0,\ \pi\max_i\ell_\mu(\ip{u_i}{Y})\Bigr\},
\]
with equality for the rule that outputs a maximizing term. Since \(\ell_\mu\) is
increasing, the optimal Bayes accuracy is
\begin{equation}\label{eq:id-bayes}
 \mathcal B_G(\mu,\pi_0)=\E\max\{\pi_0,\pi\ell_\mu(M_G)\}.
\end{equation}

\begin{corollary}\label{cor:bayes}
If \(\mu((0,\infty))>0\), then for every \(\pi_0\in[0,1)\),
\(\mathcal B_G(\mu,\pi_0)\le\mathcal B_{\Delta_n}(\mu,\pi_0)\), with equality if
and only if \(G=\Delta_n\).
\end{corollary}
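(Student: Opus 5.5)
The Bayes accuracy in \eqref{eq:id-bayes} is $\E\psi(M_G)$ with
\[
 \psi(x)=\max\{\pi_0,\ \pi\ell_\mu(x)\},
\]
so I would deduce the corollary from \cref{cor:integrated}.

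\emph{Monotonicity and integrability.} Since $\mu((0,\infty))>0$, the proof of \cref{thm:identification} shows that $\ell_\mu$ is continuous and strictly increasing. Hence $\psi$ is nondecreasing. For integrability, $0\le\psi\le\pi_0+\pi\ell_\mu$. By \eqref{eq:id-integrability}, $\E\ell_\mu(M_G)\le n$ for every $G\in\cE_n$, and in particular for $\Delta_n$. So both expectations are finite, and \cref{cor:integrated} gives $\mathcal B_G(\mu,\pi_0)\le\mathcal B_{\Delta_n}(\mu,\pi_0)$.

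\emph{Strictness.} By \cref{cor:integrated}, it suffices to find $0<x<y$ with $\psi(x)<\psi(y)$. Because $\ell_\mu$ is continuous and strictly increasing with $\ell_\mu(x)\to\infty$ as $x\to\infty$, the set where $\pi\ell_\mu(x)>\pi_0$ is a half-line $(x_1,\infty)$, possibly all of $\R$. On that half-line $\psi=\pi\ell_\mu$ is strictly increasing. Choose any $x>\max\{x_1,0\}$ and any $y>x$; then $\psi(x)<\psi(y)$ with $0<x<y$.

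The unboundedness of $\ell_\mu$ is what makes this work, and I expect it to be the only point needing care. To check it, pick $\lambda_0>0$ with $\mu([\lambda_0,\infty))>0$. Then
\[
 \ell_\mu(x)\ge\int_{[\lambda_0,\lambda_0+1]}e^{\lambda x-\lambda^2/2}\,d\mu(\lambda)\to\infty \qquad(x\to\infty),
\]
provided $\lambda_0$ is chosen so that $\mu([\lambda_0,\lambda_0+1])>0$, which is possible. This gives strict inequality for $G\ne\Delta_n$. The case $G=\Delta_n$ gives equality trivially, which completes the ``if and only if''.
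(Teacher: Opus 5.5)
Your proposal is correct and follows the paper's proof. Like the paper, you apply \cref{cor:integrated} to \(\psi=\max\{\pi_0,\pi\ell_\mu\}\), get integrability from the bound \(0\le\psi\le\pi_0+\pi\ell_\mu\) together with \eqref{eq:id-integrability}, and get strictness from the unboundedness of \(\ell_\mu\), using a compact amplitude interval in \((0,\infty)\) of positive \(\mu\)-mass; your interval \([\lambda_0,\lambda_0+1]\) in place of the paper's \([\lambda_1,\lambda_2]\) is a cosmetic difference.
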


\begin{proof}
Apply \cref{cor:integrated} to \(\psi=\max\{\pi_0,\pi\ell_\mu\}\), which is
nondecreasing with \(0\le\psi\le\pi_0+\pi\ell_\mu\); both expectations are
finite by \eqref{eq:id-integrability}. The function \(\ell_\mu\) is strictly increasing and unbounded: some \([\lambda_1,\lambda_2]\subset(0,\infty)\) has positive
\(\mu\)-mass, and
\(\ell_\mu(x)\ge\mu([\lambda_1,\lambda_2])e^{\lambda_1x-\lambda_2^2/2}\) for
\(x\ge0\). Since \(\pi>0\), there is \(x_1>0\) with
\(\pi\ell_\mu(x_1)\ge\pi_0\), and \(\psi(x)=\pi\ell_\mu(x)\) is strictly
increasing for \(x\ge x_1\). So \cref{cor:integrated} gives strictness for
\(G\ne\Delta_n\).
\end{proof}

\section{Local maxima at every eigenvalue}\label[appendix]{app:graphs}

In this appendix we prove \cref{cor:graph} from the smoothed comparison
\eqref{eq:predecessor}, and then use \cref{thm:main} to prove
\(p_{\mathcal G}(\lambda)\ge q_d(\lambda)\) at every eigenvalue \(\lambda\)
(\cref{prop:graph-local}), where \(p_{\mathcal G}\) is the local-maximum
probability defined below.

Let \(d\ge3\), and let \(\mathcal G=(V,E)\) be a nonempty finite simple
\(d\)-regular arc-transitive graph with adjacency matrix \(A\). Its
eigenvalues lie in \([-d,d]\), since
\(|x^\top Ax|\le\sum_{\{u,v\}\in E}(x_u^2+x_v^2)=d|x|^2\). For an eigenvalue
\(\lambda\), choose an orthonormal basis \(f_1,\ldots,f_k\) of its eigenspace
and let \(X=\sum_{\ell=1}^kZ_\ell f_\ell\) with independent standard Gaussian
coefficients. Every automorphism preserves the eigenspace and acts
orthogonally on it, so the law of \(X\) is invariant
\cite[Definition~2.1, Remark~2.2]{HV2015}. Arc-transitivity implies
vertex-transitivity, because every vertex has a neighbor. So the coordinate
variances are equal, with sum \(\E|X|^2=k>0\), and we rescale \(X\) to make
each equal to one. Put
\[
 p_{\mathcal G}(\lambda)=\PP\{X_v>X_u\text{ for every }u\sim v\},
\]
which does not depend on the vertex \(v\). Realize the matrix
\(\Omega_\lambda\) in the definition of
\(q_d(\lambda)\) as the Gram matrix of unit vectors
\(w_1,\ldots,w_d\in\R^d\). Since \(Z/|Z|\) is uniform on the sphere for
\(Z\sim\gamma_d\), \(q_d(\lambda)\) is the normalized spherical volume of the
cone \(\{z:\ip{w_i}{z}>0\text{ for every }i\}\); in particular
\(q_d(-d)=1/2\) (all \(w_i\) are equal) and \(q_d(d)=0\) (\(\sum_iw_i=0\)).

\begin{lemma}\label{lem:localmax}
Let \(-d<\lambda<d\), put \(\tau=\sqrt{(d-\lambda)/(d+\lambda)}\), and let
\(B\sim\cN(0,1)\). There is \(\Gamma\in\cE_d\) such that
\begin{equation}\label{eq:localmax}
 p_{\mathcal G}(\lambda)=\E F_\Gamma(\tau B),
\end{equation}
and \(\E D_d(\tau B)=q_d(\lambda)\).
\end{lemma}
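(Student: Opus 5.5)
The plan is to compute both sides directly from the Gaussian structure at a fixed vertex: the local-maximum event becomes a threshold event for a vector independent of \(X_v\), and \(q_d(\lambda)\) gets the same treatment with \(\Delta_d\) in place of \(\Gamma\).

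\emph{Setup at a vertex.} Fix \(v\) with neighbors \(u_1,\ldots,u_d\), and let \(X\) be the normalized eigenvector of \cref{app:graphs}, with unit coordinate variances. Arc-transitivity and invariance of the law of \(X\) give a common covariance \(c=\Cov(X_v,X_{u})\) on every edge. Since \(AX=\lambda X\), we have \(\sum_iX_{u_i}=\lambda X_v\), and taking covariance with \(X_v\) gives \(dc=\lambda\), so \(c=\lambda/d\).

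\emph{Splitting off \(X_v\).} Put \(D_i=X_{u_i}-X_v\). Then \(\sum_iD_i=(\lambda-d)X_v\), so
\[
 \overline D=\frac1d\sum_iD_i=\frac{(\lambda-d)X_v}d,\qquad E_i=D_i-\overline D .
\]
We have \(\Cov(D_i,X_v)=\lambda/d-1=\Cov(\overline D,X_v)\), hence \(\Cov(E_i,X_v)=0\). Joint Gaussianity then makes the vector \(E=(E_i)_i\) independent of \(X_v\). Write \(s^2=\Var(E_i)\). Since \(D_i=E_i+\overline D\) is a sum of independent terms,
\[
 s^2=\Var(D_i)-\Var(\overline D)=\frac{2(d-\lambda)}d-\frac{(d-\lambda)^2}{d^2}=\frac{(d-\lambda)(d+\lambda)}{d^2}.
\]
This is positive and independent of \(i\) because \(|\lambda|<d\). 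Let \(\Gamma\in\cE_d\) be the correlation matrix of \(E/s\).

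\emph{The first identity.} The local-maximum event \(\{D_i<0\ \forall i\}\) is
\[
 \Bigl\{\frac{E_i}{s}<\frac{(d-\lambda)X_v}{ds}\ \forall i\Bigr\}=\bigl\{E_i/s<\tau X_v\ \forall i\bigr\},\qquad \frac{d-\lambda}{ds}=\sqrt{\frac{d-\lambda}{d+\lambda}}=\tau .
\]
Conditioning on \(X_v=B\sim\cN(0,1)\), which is independent of \(E\), gives \(p_{\mathcal G}(\lambda)=\E F_\Gamma(\tau B)\). The difference between strict and non-strict inequalities is harmless because each coordinate of \(E/s\) is a continuous Gaussian independent of \(B\), so the boundary event is null.

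\emph{The second identity.} Take \(\xi\sim\cN(0,\Delta_d)\) independent of \(B\), and set \(W_i=(\tau B-\xi_i)/\sqrt{1+\tau^2}\). Each \(W_i\) has unit variance, and for \(i\ne j\),
\[
 \Cov(W_i,W_j)=\frac{\tau^2-1/(d-1)}{1+\tau^2}.
\]
Substituting \(\tau^2=(d-\lambda)/(d+\lambda)\) and \(1+\tau^2=2d/(d+\lambda)\), this equals
\[
 \frac{d^2-2d-\lambda d}{2d(d-1)}=\varrho_\lambda .
\]
Hence \(W\sim\cN(0,\Omega_\lambda)\). Because the coordinates have no atoms,
\[
 q_d(\lambda)=\PP\{W_i>0\ \forall i\}=\PP\{\xi_i<\tau B\ \forall i\}=\E D_d(\tau B).
\]

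The main point requiring care is the independence of \(E\) from \(X_v\). It rests on two facts: arc-transitivity makes every edge covariance equal to \(\lambda/d\), and the eigenvector relation fixes \(\sum_iX_{u_i}\). The remaining steps are routine variance algebra.
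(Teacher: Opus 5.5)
Your proposal is correct and follows the paper's own proof. Your \(E_i=D_i-\overline D\) equals \(X_{u_i}-(\lambda/d)X_v\), which is \(\sqrt{1-c^2}\) times the paper's \(Y_i\). Your \(W_i\) is a positive rescaling of the paper's \(\widetilde W_i=(1-c)B-s\xi_i\). So both identities come out exactly as in the paper: independence from the center value by joint Gaussianity, the common threshold \(\tau\), and the correlation computation giving \(\varrho_\lambda\).
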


\begin{proof}
At a fixed vertex write \(X_0\) for its value and \(X_1,\ldots,X_d\) for the
values at its neighbors. The eigenvector equation and arc-transitivity give
\begin{equation}\label{eq:graph-cov}
 \sum_{i=1}^dX_i=\lambda X_0,\qquad \Cov(X_0,X_i)=c:=\frac\lambda d.
\end{equation}
Let \(s=\sqrt{1-c^2}\) and \(Y_i=(X_i-cX_0)/s\). Then \(\Var(Y_i)=1\),
\(\Cov(Y_i,X_0)=0\), and \(\sum_iY_i=(\lambda-dc)X_0/s=0\). By joint
Gaussianity \(Y\) is independent of \(X_0\), and its covariance \(\Gamma\) lies in
\(\cE_d\). The fixed vertex is a strict local maximum when \(X_i<X_0\) for
every \(i\), that is, \(Y_i<(1-c)X_0/s=\tau X_0\). Each edge difference has
variance \(2(1-c)>0\), so ties have probability zero, and conditioning on
\(X_0\sim\cN(0,1)\) gives \eqref{eq:localmax}.

For the second identity take \(B\sim\cN(0,1)\) and
\(\xi\sim\cN(0,\Delta_d)\) independent, and set
\(\widetilde W_i=(1-c)B-s\xi_i\). Then
\(\E D_d(\tau B)=\PP\{\widetilde W_i>0\text{ for every }i\}\), each \(\widetilde W_i\) has
variance \(2(1-c)\), and for \(i\ne j\)
\begin{equation}\label{eq:graph-identification}
 \operatorname{Corr}(\widetilde W_i,\widetilde W_j)
 =\frac{(1-c)^2-(1-c^2)/(d-1)}{2(1-c)}
 =\frac{d-2-dc}{2(d-1)}=\frac{d-2-\lambda}{2(d-1)}=\varrho_\lambda.
\end{equation}
Thus \(\E D_d(\tau B)=q_d(\lambda)\).
\end{proof}

As Harangi and Vir\'ag observe \cite[Section~2.2]{HV2015}, arc-transitivity
makes the covariances in \eqref{eq:graph-cov} equal. This is what makes the
thresholds \(\tau X_0\) common to all neighbors, and it is the only use of
arc-transitivity beyond vertex-transitivity.

\begin{proof}[Proof of \cref{cor:graph}]
First we extract a smoothed comparison at scale \(\tau\) from
\eqref{eq:predecessor}, applied in dimension \(d\): for \(\Gamma\in\cE_d\),
\[
 M_\Gamma+\sigma B\le_{\mathrm{st}}M_{\Delta_d}+\sigma B',\qquad
 \sigma=\frac1{\sqrt{d-1}}.
\]
If \(\tau\ge\sigma\), add independent Gaussian noise of variance
\(\tau^2-\sigma^2\) to both sides; convolution preserves stochastic order, as one
sees by integrating the distribution-function inequalities against the added
noise. Evaluating at \(0\) and using the symmetry of \(B\) gives
\begin{equation}\label{eq:graph-smoothed-cdf}
 \E F_\Gamma(\tau B)\ge\E D_d(\tau B)\qquad\Bigl(\tau^2\ge\frac1{d-1}\Bigr).
\end{equation}
For \(-d<\lambda<d\),
\[
 \tau^2=\frac{d-\lambda}{d+\lambda}\ge\frac1{d-1}
 \quad\Longleftrightarrow\quad(d-1)(d-\lambda)\ge d+\lambda
 \quad\Longleftrightarrow\quad\lambda\le d-2.
\]
For any edge \(\{u,v\}\), the Rayleigh quotient of \(\varepsilon_u-\varepsilon_v\), with
\(\varepsilon_u\) the coordinate vector of \(u\), gives
\(\lambda_{\min}\le-1\le d-2\). If \(\lambda_{\min}>-d\),
\cref{lem:localmax} and \eqref{eq:graph-smoothed-cdf} give
\(p_{\mathcal G}(\lambda_{\min})\ge q_d(\lambda_{\min})\). If
\(\lambda_{\min}=-d\), then \(c=-1\) in \eqref{eq:graph-cov}, every neighbor
value equals \(-X_0\), and \(p_{\mathcal G}(\lambda_{\min})=1/2
=q_d(\lambda_{\min})\). The strict local maxima of \(X\) form an independent
set, so by invariance
\[
 \alpha(\mathcal G)\ge\E\bigl|\{v:X_v>X_u\ \forall u\sim v\}\bigr|
 =|V|\,p_{\mathcal G}(\lambda_{\min})\ge|V|\,q_d(\lambda_{\min}).
 \qedhere
\]
\end{proof}

\Cref{thm:main} removes the restriction \(\lambda\le d-2\) from the
comparison \(p_{\mathcal G}(\lambda)\ge q_d(\lambda)\); as noted in
\cref{sec:graphs}, this does not improve \cref{cor:graph}.

\begin{proposition}\label{prop:graph-local}
Let \(\mathcal G\) be a nonempty finite simple \(d\)-regular arc-transitive
graph with \(d\ge3\), and let \(p_{\mathcal G}(\lambda)\) be the
local-maximum probability of its normalized Gaussian eigenvector defined
above. For every adjacency eigenvalue \(\lambda\) of \(\mathcal G\),
\(p_{\mathcal G}(\lambda)\ge q_d(\lambda)\).
\end{proposition}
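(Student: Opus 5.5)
The plan is to reduce to \cref{thm:main} by conditioning on the vertex value, exactly as \cref{lem:localmax} sets up. I will split into three cases according to the eigenvalue: \(-d<\lambda<d\), \(\lambda=-d\), and \(\lambda=d\).

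For \(-d<\lambda<d\), \cref{lem:localmax} gives \(\Gamma\in\cE_d\) with
\[
 p_{\mathcal G}(\lambda)=\E F_\Gamma(\tau B),\qquad q_d(\lambda)=\E D_d(\tau B),\qquad \tau=\sqrt{(d-\lambda)/(d+\lambda)}>0.
\]
\Cref{thm:main} in dimension \(d\ge3\) gives \(F_\Gamma(x)\ge D_d(x)\) for every real \(x\), in particular at \(x=\tau B\) pointwise. Taking expectations (both integrands are bounded in \([0,1]\)) yields \(p_{\mathcal G}(\lambda)\ge q_d(\lambda)\) with no restriction on \(\tau\). This is the point where the unsmoothed comparison replaces \eqref{eq:graph-smoothed-cdf}, which needed \(\tau^2\ge1/(d-1)\).

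For \(\lambda=-d\) I will use the argument already given in the proof of \cref{cor:graph}. Here \(c=-1\) in \eqref{eq:graph-cov}, so every neighbor value is \(-X_0\), and therefore \(p_{\mathcal G}(-d)=\PP\{X_0>0\}=1/2=q_d(-d)\).

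For \(\lambda=d\) we have \(q_d(d)=0\), as noted before \cref{lem:localmax}, so the inequality is trivial because \(p_{\mathcal G}\ge0\). One point needs checking here: the normalization of \(X\) still makes sense. It does, since vertex-transitivity gives equal positive variances for every eigenspace, including the constant eigenvector of a connected component. Ties at \(\lambda=d\) are irrelevant, because only the trivial bound is needed.

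There is no real obstacle; the proposition is a direct corollary. The only care needed is at the endpoint eigenvalues, where \cref{lem:localmax} is not stated, and those are handled by the direct computations above.
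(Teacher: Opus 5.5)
Your proposal is correct and follows the paper's proof essentially verbatim. For \(-d<\lambda<d\) you combine \cref{lem:localmax} with \cref{thm:main} applied pointwise at \(\tau B\) and then take expectations, and you handle \(\lambda=\pm d\) directly; the only cosmetic difference is at \(\lambda=d\), where the paper notes that both sides vanish because neighbors agree almost surely, whereas you use only \(q_d(d)=0\) and \(p_{\mathcal G}\ge0\), which suffices.
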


\begin{proof}
For \(-d<\lambda<d\), \cref{lem:localmax,thm:main} give
\(p_{\mathcal G}(\lambda)=\E F_\Gamma(\tau B)\ge\E D_d(\tau B)=q_d(\lambda)\). At
\(\lambda=-d\) the probability is \(1/2=q_d(-d)\), as above, and at
\(\lambda=d\) neighbors agree almost surely and both sides are zero.
\end{proof}

For \(d=3\) the proposition is an identity: the proof of \cref{lem:localmax} gives \(\sum_iY_i=0\), so its matrix \(\Gamma\in\cE_3\) satisfies \(\Gamma\one=0\), which forces \(\Gamma=\Delta_3\) (the three rows read
\(1+\Gamma_{ij}+\Gamma_{i\ell}=0\), whose only solution is \(\Gamma_{ij}=-1/2\)), so \(p_{\mathcal G}(\lambda)=q_3(\lambda)\) at every eigenvalue. For \(d=4\) it follows from the theorem of Fejes T\'oth on four caps of the two-sphere, as Harangi and Vir\'ag show \cite[Section~2.2]{HV2015}. For cherry-transitive graphs, in which every path of length two can be mapped to every other by an automorphism, it holds with equality in every degree \cite[Definition~2.7, Proposition~2.8, and Definition~2.9]{HV2015}. For the other arc-transitive graphs of degree \(d\ge5\) it is new; for \(\lambda\le d-2\) it already follows from \eqref{eq:predecessor}, as in the proof of \cref{cor:graph}.

\section{AI disclosure}\label[appendix]{app:ai}

This appendix describes how generative AI was used to obtain
\cref{thm:main}. All dates are in 2026, in US Central time, and all models were
accessed in August and September 2026 under the names their providers gave
them.

\emph{Before the research system.} The author began working on the
circumscribed-simplex conjecture on August~14, after the first version of our
preceding paper \cite{Mulgund2026}. The first attempt was a single long conversation with
GPT-5.6 Pro in ChatGPT (OpenAI), prompted as the author had prompted it while
developing \cite{Mulgund2026}: the model kept a tree of proof routes and
periodically compressed it. Over four and a half days it produced
reformulations and partial claims, some of which it later retracted, but no
proof.

\emph{The research system.} The author then designed a research system, between
August~26 and~31, with a different aim: to build useful theory across a family
of related problems rather than to solve one problem in isolation, and to
deepen the author's own understanding of Gaussian extremal problems. Its
guiding mission is Goal~6.5 of Tao \cite[Section~6]{Tao2026AI}: ``Solve
unsolved problems, verify them to be correct, ensure they are clearly
communicated, and have them digested, accepted, and incorporated into the
definitive theory of the field.'' The system keeps a web of more than two
hundred problems in Gaussian extremal theory, linked by logical relations
(implication, equivalence, relaxation, obstruction) and structural ones
(analogy, shared method). It also proposes new problems, obtained from existing
ones by deformation, duality, or relaxation, and marks each as unverified for
novelty until it has been checked against the literature. It works on a few
central problems that appear tractable, two or three at a time, so that a
method developed for one can be tried on the others. After a problem is
solved, it continues to simplify, unify, and reinterpret the
solution.

A coordinating agent, running in OpenAI Codex on GPT-5.6 Sol and, from
September~4, on GPT-6 Astra, kept the entire research state in files. It posed
self-contained problems to ChatGPT Pro (GPT-5.6 Pro and, from September~4,
GPT-6 Pro), checked the returned arguments by having independent subagents
reconstruct them, and did mathematical work of its own. The work alternated
between two phases. In a fan-out phase, several attacks ran in parallel. After
a significant result, a compression phase reviewed everything obtained so far
and simplified, unified, and reinterpreted it into a smaller foundation for the
next round. Failed routes and counterexamples were kept.

\emph{How the result was obtained.} The circumscribed-simplex conjecture was
among the problems the system started with, on August~31. On September~1 the
statement for all correlation matrices, \eqref{eq:main}, became the primary
target at the author's insistence, as described below. GPT-5.6 Pro proved
\eqref{eq:main} for \(n=3\) and \(n=4\) on September~2 and~3, each proof
checked by an independent GPT-5.6 Pro critic. Many other attacks failed and
were recorded. For example, the Gaussian smoothing of \cite{Mulgund2026} cannot
simply be undone, since convolution with a Gaussian does not reflect
stochastic order (compare \cref{sec:smoothed}).

On the evening of September~4, shortly after the update, a GPT-6 Pro
conversation on the problem introduced the matrix \(S\) of \eqref{eq:S}, as the
second variation of \(F_G(t)\) when the vectors realizing \(G\) are lifted into
an extra dimension. On September~6 the coordinator, at the author's request,
collected equivalent reformulations and stationarity conditions, and four
focused attacks on the problem were sent to GPT-6 Pro in parallel. One of
them, on joint stationarity for five coordinates, returned a proof for \(n=5\)
early on September~7. The author then asked for a compression phase to
extract the mechanisms of that proof that could work in every dimension; the
coordinator was already preparing such a continuation. The coordinator
compressed the proof into conditions at minimizers of every rank:
\(S\succeq0\), the kernel vector \(k=S\one\), and a row bound from the
Cauchy--Schwarz inequality. Given this compression, the next GPT-6 Pro turn in
the same conversation returned a proof for all \(n\). It used log-concavity and
a dilation identity in place of an earlier Ehrhard-type concavity estimate.
Two independent GPT-6 Pro critics and the coordinator's subagents
reconstructed the proof the same morning.

Between September~7 and~22 the post-solution work compressed the proof into
its present form. The coordinator and its subagents contributed the reciprocal
form \eqref{eq:reciprocal} of the tangent inequality, the quadratic
aggregation of \cref{lem:scalar}, and the linear tilt in the proof of
\cref{thm:main}. The coordinator also recast \(S\succeq0\) as a first-order
condition along the path \eqref{eq:noise-path}, and a GPT-6 Pro consultation on
formalization supplied the proof of \cref{prop:noise} by averaging over the
added noise, which works directly at singular covariances and replaced a
regularized covariance derivative. On September~18 a literature search located
the relevant prior work. A ChatGPT Deep Research report identified
\cite[Conjecture~3.3]{BKT2017} as the closest antecedent, though it took our
statement to be strictly stronger, and pointed to McKay and Nair. Checking the
primary sources, the coordinator and its subagents found that the non-strict
statement is equivalent to that conjecture (as the coordinator had shown on
September~3), that the recurrence \eqref{eq:recurrence} is classical, and that
the case \(n=4\) follows from a theorem of Fejes T\'oth. Two GPT-6 Pro
opinions, given copies of the sources, then reviewed the attribution.

\emph{The author's contributions.} The author chose the problem, designed the
research system, and steered it throughout, with interventions at least daily
in the week before the proof. The compression phases were the author's
method, brought over from the work on \cite{Mulgund2026}: on September~3 the
author noticed that none had followed the first accepted results, and on
September~4 specified what a compression pass must do. Two ideas that organize
the proof grew out of the author's exchanges with the coordinator. On
September~1 the coordinator ranked the statement for all correlation matrices
below special cases, because it looked logically stronger than the geometric
conjecture (the two turned out to be equivalent, \cref{prop:equivalence}). The
author argued that a logically stronger statement need not be harder to prove,
and that \(\cE_n\) is a compact convex set on which methods of
\cite{Mulgund2026} might carry over. The coordinator agreed, and its scout of
the full statement reduced it to the singular matrices in \(\cE_n\); the final
proof does work over all of \(\cE_n\), through minimizers. On September~6 the
author asked whether a stochastically maximal correlation matrix must exist,
whether existence might be easier to prove than identification, and, since
proving that an improving flow cannot get stuck seemed hard, whether the
existing results could be assembled into an existence proof. The coordinator
answered by proposing to study a single worst counterexample, minimizing
\(\log(F_G(t)/D_5(t))\) jointly over the covariance and the threshold. This
joint minimum was the starting point of the proof for \(n=5\), and, with the
linear tilt described above, it is the joint minimum in the proof of
\cref{thm:main} in \cref{sec:global}. Later, the author's question whether the
noise-averaging argument, found for the formalization, gave a better proof
brought it into the written argument as the proof of \cref{prop:noise}.

The author then studied the proof, in expository editions produced by the
system and in the author's own conversations with ChatGPT. In one of these
conversations ChatGPT split the proof into the derivative bound at minimizers
(\cref{prop:interface}) and the global argument that uses it. The author had
the exposition rebuilt around this split and split further in the same way;
this organizes \cref{sec:global,sec:boundary,sec:variation,sec:mass}. In a
later conversation the author turned ChatGPT's template for the step of
\cref{sec:global} that rules out a negative difference into a general lemma
about minimizers over a compact set, whose hypotheses ChatGPT then corrected.
The author also corrected the paper's account of its relation to
\cite{Mulgund2026} and directed the writing.

The technical steps of the proof were produced by the models, as described
above. The overall strategy, however, developed in conversation: the author's
questions and reformulations were answered by the coordinator's, and these in
turn shaped the problems posed to GPT-6 Pro. So it is difficult to say
precisely where each idea originated.

\emph{Formalization, writing, and checking.} A Codex agent on GPT-6 Astra, with
three helper agents, formalized \cref{thm:main} in Lean on September~8,
following a plan written by GPT-6 Pro. The formal statements are exactly
\eqref{eq:main} and \eqref{eq:main-equality}; independent agents reviewed the
formalization, and Lean's kernel checker accepts it. GPT-6 Pro wrote a first
draft of this article, and Claude Opus 5.5 (Anthropic) rewrote and revised it
under the author's direction; fresh model instances reviewed it. Every
reference locator was checked against a copy of the source.

\emph{Availability.} The author plans to release the research system for
general use once it has been cleared of personal and sensitive information.
Its source will accompany a future revision of this preprint.

\bibliographystyle{alpha}
\bibliography{references}
\end{document}